\DeclareSymbolFont{rsfscript}{OMS}{rsfs}{m}{b}
\DeclareSymbolFontAlphabet{\mathrsfs}{rsfscript}
\newtheorem{theorem}{Theorem}[section]
\newtheorem{corollary}[theorem]{Corollary}
\newtheorem{lemma}[theorem]{Lemma}
\newtheorem{proposition}[theorem]{Proposition}
\theoremstyle{definition}
\newtheorem{definition}[theorem]{Definition}
\newtheorem{example}[theorem]{Example}
\newtheorem{remark}[theorem]{Remark}
\newtheorem{notation}[theorem]{Notation}
\newcommand\blfootnote[1]{%
  \begingroup
  \renewcommand\thefootnote{}\footnote{#1}%
  \addtocounter{footnote}{-1}%
  \endgroup
}
\newcommand{\Z}{\mathbb{Z}}
\newcommand{\N}{\mathbb{N}}
\newcommand{\C}{\mathbb{C}}
\newcommand{\R}{\mathbb{R}}
\newcommand{\K}{\mathbb{K}}
\newcommand{\fB}{\mathfrak{B}}
\newcommand{\cA}{\mathcal{A}}
\newcommand{\cR}{\mathcal{R}}
\newcommand{\cC}{\mathcal{C}}
\newcommand{\sZ}{\mathsf{Z}}
\newcommand{\sz}{\mathsf{z}}
\newcommand{\mB}{\mathsf{B}}
\newcommand{\mb}{\mathsf{b}}
\newcommand{\bt}{\mathbf{t}}
\newcommand{\Ga}{\Gamma}
\newcommand{\ga}{\gamma}
\newcommand{\be}{\beta}
\newcommand{\de}{\delta}
\newcommand{\la}{\lambda}
\newcommand{\func}[1]{\operatorname{#1}}
\newcommand{\qu}[1]{\quad\text{#1}\quad}
\newcommand{\und}[1]{\underline{#1}}
\newcommand{\si}{\sigma}
\newcommand{\om}{\omega}
\newcommand{\G}[1]{\mathsf{Graph}_{#1}(\K[\sZ^{\pm1}])}
\newcommand{\sG}[2]{\mathsf{Graph}_{#1}(#2)}
\newcommand{\sv}{\mathsf{v}}
\newcommand{\croc}[1]{\raisebox{0.1ex}{\scalebox{.8}{$[#1]$}}\vspace{-.1mm}}
\definecolor{ForestGreen}{cmyk}{0.91,0,0.88,0.42}
\numberwithin{equation}{section} 
\newcommand{\col}[2]{\mathsf{Col}_{#1}(#2)}
\newcommand{\row}[2]{\mathsf{Row}_{#1}(#2)}
\newcommand{\se}{\mathsf{e}}
\newcommand{\wt}{\mathrm{wt}}
\begin{document}

\title[Multiplicative graphs and algebras]{Basics on positively multiplicative graphs and algebras}
\author{J\'{e}r\'{e}mie Guilhot, C\'{e}dric Lecouvey and Pierre Tarrago}

\begin{abstract}
An oriented graph is said positively multiplicative when its adjacency matrix $A$ embeds
in a matrix algebra admitting a basis $\mB$ with nonnegative
structure constants in which the matrix of the multiplication by $A$ coincides with
$A$. The goal of this paper is to present basic properties of this notion and explain,
through various simple examples, how it relates to highly non trivial problems
like the combinatorial description of fusion rules, the description of the minimal boundary of graded
graphs or the study of random walks on alcove tilings.

\end{abstract}

\blfootnote{{\it 2020 Mathematics Subject Classification. 05,15,16,20\smallskip}}
\keywords{associative algebras, graphs, harmonic functions, fusion rules. \smallskip} 

\maketitle

\blfootnote{The three authors are supported by the Agence Nationale de la Recherche funding ANR CORTIPOM 21-CE40-001. 
The first author is supported by the Agence Nationale de la Recherche funding ANR JCJC Project ANR-18-CE40-0001 and by the Australian Research Council discovery project DP200100712.\smallskip}

\section{Introduction}

The aim of this paper is to study positively multiplicative algebras and positively
multiplicative graphs, two notions which provide a unified background to many
problems at the interaction between algebra, combinatorics and probability.
Given a finite set $\sZ$ of indeterminates and a field $\K$ equal to 
$\mathbb{R}$ or $\mathbb{C}$, a positively multiplicative algebra is a unital $\K[\sZ^{\pm1}]$-algebra $\mathcal{A}$ admitting a basis $\mB$ such
 that~$1\in\mB$ and the structure constants of $\cA$ with respect to $\mB$ lie in $\mathbb{R}_{+}[\sZ^{\pm1}]$,
the subset of $\K[\sZ^{\pm1}]$ of Laurent polynomials with nonnegative
real coefficients. The set $\sZ$ can be empty and then it is just required that the structure constants are nonnegative. This class of algebras contains the fusion
algebras \cite{Fuc}, the group algebras or the character algebras associated to finite
groups, the character algebras of simple Lie algebras,  but also homology or
cohomology rings defined from algebraic varieties related in particular to
Schubert calculus or to the geometry of affine Grassmannians, see \cite{LM,LLMSSZ}.

\medskip

Due to the positivity of the structure constants of $\cA$ with respect to $\mB$, the matrix
of the multiplication by any positive linear combination $s$ of elements in
$\mB$ expressed in the basis $\mB$ can be considered as an
adjacency matrix of an oriented weighted directed graph with set of vertices in bijection with the set
$\mB$. Conversely, starting from a finite oriented graph $\Gamma$ with weights
 in $\mathbb{R}_{+}[\sZ^{\pm1}]$, it is a natural question to look for
an underlying positively multiplicative algebra structure.\ We will say that
$\Gamma$ is positively multiplicative at a vertex $v_{i_{0}}$ when its
adjacency matrix~$A_\Ga$ can be embedded in a matrix algebra which is positively
multiplicative with respect to a basis~$\mB$ such that (1) the $i_0$-th element of $\mB$ is equal to the identity and (2) the matrix of the multiplication by
$A_\Ga$ expressed in the basis $\mB$ is the matrix $A_\Ga$ itself. One can notice here that we have two types of constraints for a graph to be
positively multiplicative which are of very different nature:
\begin{enumerate}
\item The first one relates to linear algebra: does there exist a matrix
algebra $\mathcal{A}$ with a basis~$\mB$ containing~$1$
and the matrix of the multiplication by $A$ is $A$? If so, we will say that~$\Gamma$
 is multiplicative.

\item The second one is of geometric nature: does there exist an algebra as
in (1) whose cone $\mathcal{C}(\mB)$ of nonnegative linear combinations
of elements in $\mB$ is stable by multiplication?
\end{enumerate}

We shall see that the answer to Question (1) is most of the time positive when the graph $\Gamma$ is of maximal dimension, that is when the algebra $\K[\sZ^{\pm1}][A_\Ga]$ has dimension the number of vertices in $\Ga$. In this case, the algebra $\mathcal{A}$ and its basis
$\mB$ are unique when they exist and can be computed by
elementary linear algebra techniques. In contrast, it is highly non trivial to
find general conditions sufficient to guarantee a positive answer to question
(2). Moreover, even when a graph $\Gamma$ is positively multiplicative, it is
often very difficult to get a combinatorial description of the structure
constants of the basis $\mB$, for example by counting paths in
$\Gamma$. Depending on the situation, this problem may be equivalent to the description of tensor product
multiplicities in representation theory, the determination of combinatorial
fusion rules or that of computing the structure constants in the homology rings of affine Grassmannians. For tensor product multiplicities of Kac-Moody algebras, an
elegant description exists in terms of the Littelmann path model \cite{Lit} or
the combinatorics of crystal graphs \cite{Kash}.\ For the symmetric groups,
these multiplicities (Kronecker coefficients) are much less understood. The
determination of combinatorial descriptions for the fusions rule in conformal
field theory \cite{Beauv} or the structure constants in the homology rings of affine Grassmannians \cite{LLMSSZ} is still an unsolved problem.

\medskip

The goal of the paper is to propose a unified approach in the study of
positively multiplicative graphs independent of the algebraic or geometric
context where they naturally appear. The results, presented in an expository
style, gather basic facts on these notions that we will need in future works and
for which we did not find explicit references in the literature. Such results will be used in particular in \cite{GLT2} and \cite{GLT3} to study
respectively random walks on alcoves tilings and convergences of random
particle systems on a discrete circle. Although
they use quite elementary tools, we believe that they deserved to be written
down. We will explain in particular, when the graph is of maximal dimension, how to decide whether a graph is
multiplicative and then provide simple procedures to compute the associated
basis.\ When a graph~$\Gamma$ is positively multiplicative, we
will also give a general construction (called expansion) yielding an infinite
graded graph~$\Gamma_{\se}$ defined from~$\Gamma$ for which it is easy to
get a complete description of the extremal positive harmonic functions.\ These
functions are essential tools in the study of random walks on graphs or
alcoves tilings (see for example \cite{Ker} and \cite{LT2}).

\medskip

In each of the following sections, we have chosen to present numerous examples
of the different notions we introduce. We hope they will be sufficiently helpful for the
reader. The paper itself is organised as follows.\ Section 2 is devoted to
the notion of positively multiplicative algebras and its connection with 
fusion algebras. 
 In Section 3, we define and study multiplicative
graphs~$\Gamma$. When $\Gamma$ is of  maximal dimension, we give a
simple procedure to check whether or not it is multiplicative at a given vertex and
then to compute its associated basis $\mB$. The positively
multiplicative graphs are presented in Section 4. 
In Section 5, we introduce  the column and row Kirillov-Reshetikhin crystals of affine type A. These are two important examples
of positively multiplicative graphs whose definition is elementary but with
structure constants of high combinatorial complexity. In Section 6, we detail the
expansion procedure and explain how to get a
description of the minimal boundary (i.e. of the extremal positive
harmonic functions) on each extended graph coming from a positively
multiplicative graph. Their generalisations
and their study will be our main objective in~\cite{GLT2}.
In Section 7, we study commutative positively multiplicative (possibly infinite dimensional) algebras over $\C$ and show that  
the subset of elements of the distinguished basis $\mB$ that are generalised permutations has the structure of an abelian group. In the case of a positively multiplicative algebra over $\C$ coming from a positively multiplicative graph, this subset is precisely the set of vertex $v$ of $\Ga$ for which $\Ga$ is positively multiplicative at $v$.

\section{Positively multiplicative algebras}
 We start by introducing some notation that we will use all along the paper:
\begin{enumerate}
\item[{\tiny $\bullet$}] $\K$ is either $\mathbb{R}$ or $\mathbb{C}$, 
\item[{\tiny $\bullet$}] $\sZ=\{z_{1},\ldots,z_N\}$ with $N\in \N$ is a set (possibly empty) of formal indeterminates,
\item[{\tiny $\bullet$}] $\K[\sZ^{\pm1}]$ is the Laurent
polynomial ring in $\sZ$ over $\K$,
\item[{\tiny $\bullet$}]  $\mathbb{R}_{+}[\sZ^{\pm1}]$ is the set of 
Laurent polynomials with nonnegative real coefficients,
\item[{\tiny $\bullet$}] $\K(\sZ^{\pm1})$ is the ring of fractions of $\K[\sZ^{\pm1}]$.
\end{enumerate}
 In this section, $\cA$ denotes a unital associative algebra over $\K[\sZ^{\pm1}]$.
\begin{definition}
\label{DefPMBases}The algebra $\mathcal{A}$ is said to be \emph{positively
multiplicative} (PM for short) when it admits a $\K[\sZ^{\pm1}]$-basis
$\mB=\{\mb_{i},i\in I\}$ indexed by a countable set $I$ satisfying the following two conditions:
\begin{enumerate}
\item $1\in\mB$,
\item for any $i,j\in I$, we have 
$
\mb_{i}\mb_{j}=\sum_{k\in I}c_{i,j}^{k}\mb_{k}$ with $c_{i,j}^{k}\in
\mathbb{R}_{+}[\sZ^{\pm1}].$
\end{enumerate}
When such a basis $\mB$ exists, we say that it is a positively
multiplicative basis for $\cA$ (PM-basis for short) and that $
\cA$ is positively multiplicative with respect to $\mB$. The algebra $\mathcal{A}$ is said to be \emph{strongly positively multiplicative} when it
is positively multiplicative and for any $(j,k)\in I^{2}$ there exists at
least one index $i$ in $I$ such that $c_{i,j}^{k}\neq0$.
\end{definition}

\begin{example}
\ \label{Ex_PMalgebra}

\begin{enumerate}
\item The algebra $\K^{n}$ is PM with respect to the basis 
 $\mB=\{\mb_{i}\mid i=1,\ldots,n\}$
with $\mb_{1}=(1,\ldots,1)$ and  $\mb_{i}=e_{i-1}$ for $i=2,\ldots,n$ where $(e_1,\ldots,e_n)$ denotes the canonical basis of $\K^n$. It is easy to construct other PM-basis for $\K^{n}$, for instance one could change $\mb_2$ to $(0,1,\ldots,1)$ in the basis $\mB$. In general a
PM basis is not unique.
\item Any finite-dimensional $\mathbb{C}$-algebra $\mathcal{A}$ generated by
one element is PM. Indeed, if we set $\mathcal{A}%
=\mathbb{C}[a]$ and denote by $\mu_{a}$ the minimal polynomial of $a$, we can
factorise~$\mu_{a}$
\[
\mu_{a}(X)=\prod_{i=1}^{k}(X-\alpha_{i})^{m_{i}}%
\]
where $\alpha_{1},\ldots,\alpha_{k}$ are the $k$ distinct complex roots of
$\mu_{a}$. Then, using the Chinese remainder theorem, we get%
\[
\mathcal{A}\cong\prod_{i=1}^{k}\mathbb{C}[X]/(X-\alpha_{i})^{m_{i}}.
\]
The algebra on the right hand side is PM because each algebra
$\mathbb{C}[X]/(X-\alpha_{i})^{m_{i}}$ is with respect to the basis $\{1,X-\alpha
_{i},\ldots,(X-\alpha_{i})^{m_{i}-1}\}$.

\item Regarded as a two-dimensional $\mathbb{R}$-algebra, the field
$\mathbb{C}$ of complex numbers is not positively multiplicative.
Indeed if such a basis $(1,z)$ existed, we could assume that $z^{2}=a+bz$ with
$(a,b)\in\mathbb{R}_{\geq0}^{2}$ and get that $z$ is a non real root of the
polynomial $X^{2}-bX-a$. Since its discriminant is positive, this yields a contradiction.

\item Let $\mathcal{A}$ be a finite-dimensional commutative subalgebra of
$\func{Mat}_{n}(\mathbb{C})$ (the algebra of $n\times n$ complex matrices) stable by the
adjoint operation. Then, each matrix $A$ in $\mathcal{A}$ is diagonalisable
because $AA^{\ast}=A^{\ast}A$. Moreover, since $\mathcal{A}$ is commutative,
there is a common basis of diagonalisation. Therefore, the algebra
$\mathcal{A}$ is isomorphic to $\mathbb{C}^{n}$ which is  PM.

\item Consider any polynomial $P(X)=X^{n}-a_{n-1}X^{n-1}-\cdots-a_{0}$ with
$(a_{n-1},\ldots,a_{0})\in\mathbb{R}_{+}^{n}$ and $A$ its companion
matrix. Then $\K[A]$ is a $n$-dimensional PM 
subalgebra of $\func{Mat}_{n}(\K)$ with respect to the basis $\{A^{k}%
\mid k=0,\ldots,n-1\}$.

\item For any finite group $G$, the group algebra $\mathbb{C}[G]$ is a PM
algebra with respect to the basis $\{g\mid g\in G\}$.

\item For any finite group $G$, its complex character ring $\mathcal{R}[G]$ is
a commutative PM algebra with respect to the basis of irreducible characters. Observe
that $\mathcal{R}[G]$ is also isomorphic to $\mathbb{C}^{n}$ just by considering
the basis of indicator functions associated to the conjugacy classes of $G$.

\item Homology and cohomology rings associated to algebraic varieties are
other important examples of PM algebras.
\end{enumerate}
\end{example}

Examples (6) and (7) above are particular cases of fusion algebras (see \cite[Sec. 5.1]{Fuc} for a detailed introduction to fusion algebras). A fusion algebra is an algebra over $\C$ (here we take
$\sZ=\emptyset$) with a positively multiplicative basis $\mB%
=\{\mb_{i},i\in I\}$ and an involutive anti-automorphism $\ast$ (that induces an involution $i\mapsto i^\ast$ on $I$ by setting $\mb_{i}^{\ast}=\mb_{i^{\ast}}$) that satisfy $c_{i,j}^{k}=c_{i^{\ast},k}^{j}$ for all $i,j,k\in
I$.
\begin{proposition}
Every fusion algebra is strongly positively multiplicative.
\end{proposition}

\begin{proof}
Let $\mathcal{A}$ be a fusion algebra with basis $\mB=\{\mb_{i},i\in I\}$. Since $\ast$
is an anti-automorphism, we have~$c_{i,j}^{k}=c_{j^{\ast},i^{\ast}}^{k^{\ast}}=c_{j,k^{\ast}}^{i^{\ast}}$ using the property of fusion algebras. Assume there exists $(j,k)\in
I^{2}$ such that $c_{i,j}^{k}=0$ for all $i\in I$. By the previous argument,
we have $c_{j,k^{\ast}}^{i^{\ast}}=0$ for all $i\in I$ and
thus~$c_{j,k^{\ast}}^{i}=0$ for all $i\in I$ since the map $i\mapsto i^{\ast}$
is a bijection on $I$. We get the equality $\mb_{j}\mb_{k^{\ast}}=0$. This
implies that~$\mb_{j^{\ast}}\mb_{j}\mb_{k^{\ast}}\mb_{k}=0$. Now observe that
$c_{j^{\ast},j}^{i_0}=c_{j,i_0}^{j}=1$ and similarly $c_{k^{\ast},k}^{i_0}=1$, where $\mb_{i_0}=1\in\mB$. Since
the structure constants of the fusion algebra
$\mathcal{A}$ are nonnegative, the coefficient of $1$ in the
product $\mb_{j^{\ast}}\mb_{j}\mb_{k^{\ast}}\mb_{k}$ is positive. Hence $\mb_{j^{\ast}}\mb_{j}\mb_{k^{\ast}}\mb_{k}\neq0$ and we get the desired contradiction.
\end{proof}

\begin{proposition}
\label{Lem_IntegralDomain}Let $\mathcal{A}$ be a finite-dimensional, commutative and positively multiplicative algebra. Then $\cA$ is integral over $\K[\sZ^{\pm1}]$.
\end{proposition}

\begin{proof}
Let $\mB=\{\mb_i,i\in I\}$ be a PM-basis for $\cA$. 
Since $\mathcal{A}$ is generated over $\K[\sZ^{\pm1}]$ by the elements of~$\mB$, it suffices to show that the
elements $\mb_{i}$ are integral over $\K[\sZ^{\pm1}]$.\ For any such 
$\mb_{i}$, the matrix $M$ of the multiplication by $\mb_{i}$ in
$\mathcal{A}$ in the basis $\mB$ has coefficients in $\K[\sZ^{\pm1}]$ (in fact in $\mathbb{R}_{+}[\sZ^{\pm1}]$)$.$ Thus the characteristic
polynomial of $M$ has coefficients in $\K[\sZ^{\pm1}]$. This implies
that its minimal polynomial, which is also the minimal polynomial of $\mb_{i}$
also has coefficients in $\K[\sZ^{\pm1}]$.\ Hence $\mb_{i}$ is
integral over $\K[\sZ^{\pm1}]$.
\end{proof}

\begin{remark}
\label{Remak_BBK}
Given a PM-algebra $\mathcal{A}$ with $\sZ\neq\emptyset$, any morphism
$\theta:\K[\sZ^{\pm1}]\rightarrow \K$ defines a specialisation of
$\mathcal{A}$ that we shall denote $\mathcal{A}_{\theta}.$ When $\theta
(\sz)\in\mathbb{R}_{>0}$ for any $\sz\in \sZ$, the algebra $\mathcal{A}%
_{\theta}$ remains positively multiplicative. Further if $\mathcal{A}$ is
strongly positively multiplicative, so is $\mathcal{A}_{\theta}$.
\end{remark}

\section{Multiplicative graph}
Let $\Ga=(V,E,\om)$ be a finite oriented weighted graph where $V$ is the finite set of vertices, $E$ is the set of edges $E$ and $\om$ is the edge weight function taking values in some algebra. We will assume that $\Ga$ does not have multiple edges, so that the set of edges can be viewed as a subset of $V\times V$: a pair $(v,v')\in V\times V$ represents an edge starting at $v$ and ending at $v'$. Finally, it will be convenient to extend the edge weight function $\om$ to the full set $V\times V$ simply by setting $\om(v,v') = 0$ if there is no edge from $v$ to $v'$. Note that with this setting, the sum of the weights of all edges starting at a given vertex $v$ is equal to $\sum_{v'\in V} \om(v,v')$.

\smallskip

Unless explicitely specified otherwise, all the graphs considered in this paper are finite oriented weighted graph with edge weight function taking values in $\K[\sZ^{\pm1}]$.
We denote by $\G{n}$ the set of all such graphs that contains $n$ vertices. In most examples and applications the indeterminates in $\sZ^{\pm1}$ will be eventually specialised to positive reals.

\smallskip

In this section, $\Ga$ denotes a graph in $\G{n}$ with set of vertices $\{v_1,\ldots,v_n\}$ and edge weight function $\om$. We denote by $A_\Ga=(a_{i,j})_{1\leq i,j\leq n}$ the adjacency matrix of $\Gamma$ (which lies in $\func{Mat}_{n}(\K[\sZ^{\pm1}])$),  that is the coefficient $a_{i,j}$ is equal to $\om(v_j,v_i)$.  The adjacency algebra $\cA_\Ga$ of $\Ga$ is the algebra $\K(\sZ^{\pm1})[A_\Ga]$ of polynomials in $A_\Ga$ with coefficients in~$\K(\sZ^{\pm1})$. Note that the adjacency matrix and the adjacency algebra of $\Ga$ are defined up to an ordering of the vertices of $\Ga$ and thus up to a conjugation of $A_\Ga$ by a permutation matrix.

\smallskip

In this section, we introduce the notion of multiplicative graph which lies at the heart of the paper.
We then study this notion in the case where the graph is of maximal dimension, that is when the adjacency algebra is of dimension the number of vertices of $\Ga$, where we give an explicit criterion to decide whether or not the graph is multiplicative.

\subsection{Multiplicative graph, roots and matrix realisation }

Given any finite dimensional associative algebra $\cA$, a basis $\mB$ of $\cA$ and $x\in \cA$, we denote by $m_x$ the endomorphism of $\cA$ defined by $a\mapsto xa$ and by $\func{Mat}_{\mB}(m_x)$ the matrix of $m_x$ in the basis $\mB$. We write $\mB\croc{i}$ for the $i$-th element of the basis~$\mB$.

\begin{definition}
\label{Def_MGraphs}
\begin{enumerate}
\item We say that $(\cA,\mB)$  is a matrix realisation of $\Ga$ if $\cA$ is a subalgebra of $\func{Mat}_n(\K(\sZ^{\pm1}))$ of dimension~$n$ containing $\cA_\Ga$ and $\mB$ is a basis of $\cA$ such that  $A_\Ga = \func{Mat}_{\mB}(m_{A_\Ga})$.
\item We say that $\Ga$ is \emph{multiplicative} at $v_{i_0}$ if there exists a matrix realisation $(\cA,\mB)$ of $\Ga$ with $\mB\croc{i_0} = I_n$. 
\item We say that $\Ga$ is \emph{multiplicative} if there exists a vertex $v_{i_0}$ such that $\Ga$ is \emph{multiplicative} at $v_{i_0}$.  
\item The set $\cR_\Ga$ of \emph{roots} of $\Ga$ is the set of vertices $v_{i_0}$ such that $\Ga$ is multiplicative at $v_{i_0}$.
\end{enumerate}
\end{definition}
We will see in Example \ref{Counterexample} that it is possible to have a graph $\Ga$ that admits a matrix realisation but which is not multiplicative. Note that if $(\cA,\mB)$  is a matrix realisation of $\Ga$ then $(\cA,\mB x)$ is also a matrix realisation of $\Ga$ for all invertible elements $x\in \cA^\times$. In particular, a graph that admits a matrix realisation $(\cA,\mB)$ such that there exists $\mb\in \mB\cap \cA^\times$ is multiplicative. Indeed the pair $(\cA,\mB\mb^{-1})$ is a matrix realisation of $\Ga$ and $\mB\mb^{-1}$ contains the identity.

\begin{example}
\label{mutl_graph_2}
Let $P=X^{n}-a_{n-1}X^{n-1}-\cdots-a_{0}\in\K[X]$ and let $A$ be the companion matrix of $P$. Let~$\Ga$ be the weighted graph with $n$ vertices $\{v_1,\ldots,v_{n}\}$ and adjacency matrix $A_\Ga$ equal to $A$ (here $\sZ=\emptyset$). For example, if $P= X^4-2X^3-X^2-3X-4$, the graph $\Ga$ and the matrix $A_\Ga$ are given by

$$
\begin{minipage}{8cm}
$$
\begin{tikzpicture}[scale =1.2]
\def\rx{1.2}
\def\ry{0.866}
\tikzstyle{vertex}=[inner sep=2pt,minimum size=10pt,ellipse,draw]

\node[vertex] (a0) at (0,0) {$v_1$};
\node[vertex] (a1) at (\rx,0) {$v_2$};
\node[vertex] (a2) at (2*\rx,0) {$v_3$};
\node[vertex] (a3) at (3*\rx,0) {$v_4$};

\draw[line width=.5pt,->] (a0) to   (a1);
\draw[line width=.5pt,->] (a1) to   (a2);
\draw[line width=.5pt,->] (a2) to   (a3);
\draw[line width=.5pt,->,bend left = 30] (a3) to  node[below]{\footnotesize{1}} (a2);
\draw[line width=.5pt,->,bend left = 60] (a3) to  node[below]{\footnotesize{3}}  (a1);
\draw[line width=.5pt,->,bend right = 30] (a3) to  node[above]{\footnotesize{4}}  (a0);

\draw[line width=.5pt,->] (a3) to[out=-35,in=45,looseness=10]  node[right]{\footnotesize{2}}  (a3);

\end{tikzpicture}
$$
\end{minipage}
\begin{minipage}{4cm}
$$
A_\Ga=\left(
\begin{array}{cccc}
0 & 0 & 0&4\\
1 & 0 & 0&3\\
0 & 1 & 0&1\\
0 & 0 & 1&2
\end{array}
\right)
.$$
\end{minipage}
$$
\begin{center}
\end{center}
The minimal polynomial of $A_\Ga$ is $P$ so that $\K[A]$ is of dimension $n$ and the pair $(\K[A],\mB)$ where $\mB=\{I_n,A_\Ga,\ldots,A_\Ga^{n-1}\}$ is a matrix realisation of $\Ga$. The graph $\Ga$ is multiplicative at $v_1$.
\end{example}

\begin{proposition}
\label{m_x}
The graph $\Ga$ is multiplicative at $v_{i_0}$ if and only if there exists a $n$-dimensional associative algebra $\cA$ over $\K(\sZ^{\pm 1})$, an element $x\in \cA$ and a basis $\mB=(\mb_1,\ldots,\mb_{n})$ of $\cA$ such that $\mb_{i_0} =1$ and $\func{Mat}_{\mB}(m_x) = A_\Ga$.
\end{proposition}
\begin{proof}
Assume that there exists such an algebra $\cA$. The map
$$\begin{array}{ccccccc}
\func{M}_{\mB}&:& \cA&\to&\func{Mat}_n(\K(\sZ^{\pm 1}))\\
&& y &\mapsto & \func{Mat}_{\mB}(m_y)
\end{array}$$
is an injective morphism of algebras. Then $\func{M}_{\mB}(\cA)$ is a subalgebra of $\func{Mat}_n(\K(\sZ^{\pm 1}))$. We claim that $(\func{M}_{\mB}(\cA),\mB)$ where $\mB = \{\func{M}_{\mB}(\mb_i)\mid i=1,\ldots,n\}$ is a matrix realisation of $\Ga$. First, $\func{M}_{\mB}(x)=A_{\Gamma}$ by construction, so that $\mathcal{A}_{\Gamma}\subset \func{M}_{\mB}(\cA)$. Then, we have $x\mb_j=m_x(\mb_j) = \sum_{i=1}^{n} a_{i,j} \mb_i$ so that $m_{x\mb_j} = \sum a_{i,j} m_{\mb_i}$. Hence, 
\begin{align*}
\func{M}_{\mB}(x) \func{M}_{\mB}(\mb_j)=A_\Ga \func{M}_{\mB}(\mb_j) = \func{Mat}_{\mB}(m_x) \func{Mat}_{\mB}(m_{\mb_j}) =\func{Mat}_{\mB}(m_xm_{\mb_j}) =\func{Mat}_{\mB}(m_{x\mb_j}) &= \sum_{i=1}^{n} a_{i,j} \func{Mat}_{\mB}(m_{\mb_i})\\
&=\sum_{i=1}^{n} a_{i,j} \func{M}_{\mB}(\mb_i).
\end{align*}
Since $\mb_{i_0}=1$, we have $\func{Mat}_{\mB}(m_{\mb_{i_0}}) =I_n$ and we get that $\Ga$ is multiplicative at $v_{i_0}$. The converse is obvious by definition of a multiplicative graph.
\end{proof}
\begin{example}
\label{mutl_graph_1}
Let $\Ga$ be the Cayley graph of the symmetric group $\mathfrak{S}_{n}$ associated to the transpositions $\tau_i = (1,i)$ with $i=2,\ldots,n$. That is, the set of vertices of $\Ga$ is $\{\sigma\mid \sigma\in \mathfrak{S}_n\}$ and there is an edge (with weight 1) between $\sigma$ and $\sigma'$ if and only if $\sigma' = \tau_i\sigma$ for some $i$. We show that $\Ga$ is multiplicative at $\se$ where~$\se$ denotes the identity of $\mathfrak{S}_{n}$. Let $\mathcal{A} = \K[\mathfrak{S}_{n}]$ be the group algebra of
the symmetric group $\mathfrak{S}_{n}$ with basis $\mB=\{e_{\sigma}\mid\sigma\in\mathfrak{S}_{n}\}$ and let $x=\sum_{2\leq i\leq n}e_{(1,i)}$. Then we have $A_\Ga = \func{Mat}_{\mB}(m_x)$ hence showing that $\Ga$ is multiplicative at $\se$. In fact, $\Ga$ is multiplicative at any vertex
$\rho$ by considering the basis $\mB e_{{\rho}^{-1}}$.
\end{example}

Given $i,j,k\in \{1,\ldots,n\}$, let  $m_{k\to i}^{j}\in \K[\sZ^{\pm1}]$ be the sum of the weights of all paths of length~$j$ that start at $v_k$ and finish at $v_i$. For $i_0\in \{1,\ldots,n\}$, we denote by $M_{i_0}$ the matrix with coefficients $(m_{i_0\to i}^{j})_{1\leq i,j\leq n}$.

\smallskip
Assume that $\Ga$ is multiplicative graph at $v_{i_0}$ and let $(\cA,\mB)$ be a matrix realisation of $\Ga$ where $\mB=\{\mb_1,\ldots,\mb_{n}\}$ and $\mb_{i_0}=I_n$.  By induction, for any integer $j\geq0$ and $k\in\{1,\ldots,n\}$ we have
\begin{equation}
A_\Ga^{j}\mb_{k}=\sum_{i=1}^{n}m_{k\to i}^{j}\mb_{i}. \label{Transfert}%
\end{equation}
 
We denote by $c_{i,j}^k$ where $i,j,k\in \{1,\ldots,n\}$ the structure constants of $\cA$ with respect to $\mB$, that is 
\[
\mb_{i}\mb_{j}=\sum_{k=1}^{n}c_{i,j}^{k}\mb_{k}.
\]

\begin{lemma}
\label{Lem_SPM_SC}
Assume that $\Gamma$ is strongly connected. Then, for all $1\leq j,k\leq n$, there exists at least one
$i\in\{1,\ldots,n\}$ such that $c_{i,j}^{k}\neq0$.
\end{lemma}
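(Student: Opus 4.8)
The plan is to exploit the strong connectivity of $\Gamma$ together with two different ways of expanding a power of $A$ applied to a basis element. Fix $j,k\in\{0,\ldots,n-1\}$. Since $\Gamma$ is strongly connected, there is an oriented path from $v_{j}$ to $v_{k}$; let $p$ denote its length. I claim the weight $m_{k,p}^{j}$ of the length-$p$ paths from $v_{j}$ to $v_{k}$ (in the notation of \eqref{Transfert}) is a nonzero element of $\mathrm{k}_{+}[Z^{\pm1}]$: it is a sum, over all such paths, of products of arrow weights, each of which is a nonzero element of the integral domain $\mathrm{k}[Z^{\pm1}]$ with nonnegative coefficients. Since all coefficients are nonnegative no cancellation can occur, and at least one path contributes, so $m_{k,p}^{j}\neq0$.

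Next I would compute the matrix product $A^{p}b_{j}$ in two ways. On one hand, the transfer relation \eqref{Transfert} gives $A^{p}b_{j}=\sum_{i}m_{i,p}^{j}b_{i}$, whose coefficient on $b_{k}$ is precisely $m_{k,p}^{j}$. On the other hand, because $b_{i_{0}}=1$ we have $A=A\,b_{i_{0}}=\sum_{l}a_{i_{0},l}b_{l}\in\mathcal{A}$, hence $A^{p}\in\mathcal{A}$ and \eqref{A in B} expands $A^{p}=\sum_{l}m_{l,p}^{i_{0}}b_{l}$ in the basis $\mathfrak{B}_{i_{0}}$. Multiplying on the right by $b_{j}$ and using the structure constants $b_{l}b_{j}=\sum_{k'}c_{l,j}^{k'}b_{k'}$ yields
\[
A^{p}b_{j}=\sum_{l=0}^{n-1}m_{l,p}^{i_{0}}\,b_{l}b_{j}=\sum_{k'=0}^{n-1}\Big(\sum_{l=0}^{n-1}m_{l,p}^{i_{0}}\,c_{l,j}^{k'}\Big)b_{k'},
\]
so the coefficient on $b_{k}$ equals $\sum_{l}m_{l,p}^{i_{0}}c_{l,j}^{k}$.

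Comparing the two expansions in the basis $\mathfrak{B}_{i_{0}}$ gives the identity $m_{k,p}^{j}=\sum_{l}m_{l,p}^{i_{0}}c_{l,j}^{k}$. Since the left-hand side is nonzero, at least one summand on the right must be nonzero, which forces $c_{l,j}^{k}\neq0$ for some $l$; taking $i=l$ then finishes the argument. The only step demanding genuine care, and hence the \emph{main obstacle}, is the first paragraph: one must be sure that strong connectivity produces an honestly nonzero coefficient $m_{k,p}^{j}$, which relies on $\mathrm{k}[Z^{\pm1}]$ being an integral domain and on the absence of sign cancellation among nonnegative weights. Once that nonvanishing is in hand, the comparison of the two expansions is immediate and the conclusion follows by a pigeonhole argument applied to the finite sum.
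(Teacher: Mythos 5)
Your proof is correct and is essentially the paper's argument in contrapositive form: the paper assumes $c_{i,j}^{k}=0$ for all $i$, observes that then every $A^{\ell}b_{j}$ (lying in the ideal generated by $b_{j}$) has zero coefficient on $b_{k}$, and concludes no path from $v_{j}$ to $v_{k}$ exists, contradicting strong connectivity — which is exactly your coefficient identity $m_{k,p}^{j}=\sum_{l}m_{l,p}^{i_{0}}c_{l,j}^{k}$ read in the other direction. Both arguments rest on the same ingredients, namely the relation (\ref{Transfert}), the expansion of $A^{p}$ on $\mathfrak{B}_{i_{0}}$, and the fact that nonnegative path weights cannot cancel, so your proposal matches the paper's proof.
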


\begin{proof}
Assume that we have $1\leq j_{0},k_{0}\leq n$ such that $c_{i,j_{0}}^{k_{0}%
}=0$ for any $i\in\{1,\ldots,n\}$.\ This means that the right ideal $\mathcal{A}\mb_{j_{0}}$ generated by $\mb_{j_{0}}$ in $\mathcal{A}$ is contained in $\oplus_{k\neq
k_{0}}\K(\sZ^{\pm1})\mb_{k}$.\ Thus for any nonnegative integer~$\ell$, we have $A^{\ell}\mb_{j_{0}}\in\oplus_{k\neq k_{0}}\K(\sZ^{\pm1})\mb_{k}$. By
(\ref{Transfert}), this implies that there cannot exist a path in
$\Gamma$ from $v_{j_{0}}$ to~$v_{k_{0}}$ which contradicts our assumption that
$\Gamma$ is strongly connected.
\end{proof}

\subsection{Multiplicative graphs of maximal dimension}
Recall that the adjacency algebra $\cA_\Ga$ of $\Ga$ is defined to be $\K(\sZ^{\pm1})[A_\Ga]$ where $A_\Ga\in \func{Mat}_n(\K[\sZ^{\pm1}])$ is the adjacency matrix of $\Ga$.
\begin{definition}
We say that $\Ga$ is of  \emph{maximal dimension} if $\dim\cA_\Ga=n$.
\end{definition}
Let $\Ga$ be a graph of maximal dimension and assume that  there exists a matrix realisation $(\cA,\mB)$ of $\Ga$. Then since $\cA$ contains $A_\Ga$ and is of dimension $n$, we must have $\cA=\cA_\Ga$. In particular, $\cA$ is commutative. Note also that, in this case, the centralizer of $A_\Ga$ is
$\cA_\Ga$.
\smallskip
\begin{example}
\begin{enumerate}
\item The graph in Example \ref{mutl_graph_1} is not of maximal dimension. Indeed, if it was, the algebra $\K[\mathfrak{S}_n]$ would be isomorphic to a commutative algebra. 
\item The graphs $\Ga$ constructed from a polynomial $P$ as in Example \ref{mutl_graph_2} are of maximal dimension since the minimal polynomial of $A_\Ga$ is $P$ and the degree of $P$ is the number of vertices in $\Ga$.
\item Let $\mathcal{A}$ be the complex character algebra of a finite
group $G$ with positively multiplicative basis the set $\mB=\{\chi_{1},\ldots,\chi_{n}\}$
of irreducible characters. Let 
\[
\varphi=a_{1}\chi_{1}+\cdots+a_{n}\chi_{n}\qu{with $(a_{1},\ldots,a_{n})\in\mathbb{R}^{n}.$}
\]
Let $\Ga$ be the graph with $n$ vertices such that $A_\Ga =\func{Mat}_{\mB}(m_\varphi)$. In other words, the vertices of $\Ga$ are in bijection with the set of irreducible characters and the weights encode the tensor product multiplicities.
The basis~$\mB'=\{1_{C_{1}},\ldots,1_{C_{n}}\}$ of characteristic
functions associated to the conjugacy classes of $G$ is another positively multiplicative basis of
$\mathcal{A}$. Using $\mB'$, we see that $\cA$ is isomorphic to
the algebra~$\mathbb{C}^{n}$. It follows that $\Ga$ is of maximal
dimension if and only if $\varphi$ takes distinct values on each conjugacy class. Indeed the element $x=(x_1,\ldots,x_n)$ of $\mathbb{C}^{n}$ generates $\mathbb{C}^{n}$ if and only if its minimal polynomial has degree $n$. Since this polynomial coincides with that of the multiplication by $x$ in $\mathbb{C}^{n}$ whose eigenvalues are the coordinates $x_i,i=1,\ldots,n$, this is equivalent to say that the $x_i$'s are pairwise distinct.
\end{enumerate}
\end{example}


We first show that if $\Ga$ is of maximal dimension then it admits a matrix realisation. 
\begin{proposition}
\label{Prop_Unicity} Assume that $\Ga$ is of maximal dimension. The set 
$$\fB = \{\mB\mid (\cA_\Ga,\mB) \text{ is a matrix realisation of $\Ga$}\}$$
is non-empty and the invertible elements $\cA_\Ga^\times$ of $\cA_\Ga$ acts transitively by left multiplication on this set.
\end{proposition}

\begin{proof} We prove that the set $\fB$ is non empty. Since~$\dim \cA_\Ga=n$, the set $\mB^{\prime
}=\{A^{i},i=0,\ldots,n-1\}$ is a basis of $\cA_\Ga$ and $\func{Mat}_{\mB'}(m_{A_\Ga}) = \cC_{\mu_{A_\Ga}}$ where $\mu_{A_\Ga}$ is the minimal polynomial of $A_\Ga$ and $\cC_{\mu_{A_\Ga}}$ is the companion matrix of $\mu_{A_\Ga}$. The matrices $\cC_{A_\Ga}$ and $A_\Ga$ have entries in $\K(\sZ^{\pm1})$ and are conjugate in $\func{Mat}_n(\K(\sZ^{\pm1}))$ since $\dim\K(\sZ^{\pm1})[A_\Ga]=n$ is the degree of $\mu_{A_\Ga}$. Thus $A_\Ga=P\mathcal{C}_{\mu_{A}}P^{-1}$ where $P$ is an invertible matrix $P$ with entries in $\K(\sZ^{\pm1})$.  If we define $\mB$ to be the basis of $
\cA_\Ga$ such that the change of basis matrix from $\mB'$ to $\mB$ is equal to $P$, we then have  $\func{Mat}_{\mB} (m_{A_\Ga})= A_\Ga$ as required. 

\smallskip

We prove that $\cA_\Ga^\times$ acts by multiplication on $\fB$. Let $\mB = \{\mb_1,\ldots,\mb_n\}$ and $\mB'= \{\mb'_1,\ldots,\mb'_n\}$ be two bases in $\fB$. Let~$Q = (q_{i,j})_{1\leq i,j\leq n}$ be the change of basis matrix from $\mB'$ to $\mB$, that is $\mb'_j=\sum_i q_{i,j} \mb_j$. We have 
$\func{Mat}_{\mB'}(m_{A_\Ga}) =Q^{-1} \func{Mat}_{\mB}(m_{A_\Ga}) Q$
so that $A_\Ga = Q^{-1}A_\Ga Q$ and $A_\Ga$ commutes with $Q$. But $\Ga$ is of maximal dimension so the centralizer of $A_\Ga$ is 
equal to $\cA_\Ga$, therefore $Q\in \cA_\Ga$. Let $U\in \K(\sZ^{\pm 1})[X]$ be such that $Q=U(A_\Ga)$. Since $\func{Mat}_{\mB}(m_{A_\Ga})= A_\Ga$, we get that   $\func{Mat}_{\mB}(m_{Q})= Q$. But by definition, we have 
$$\mb_j' = \sum_{i=1}^n q_{i,j} \mb_j = Q\mb_j.$$
This shows that $\mB$ can be obtained from $\mB'$ by multiplying by the invertible element $Q$.

\smallskip

Finally, it is clear that if $\mB\in \fB$ then $x\mB\in \fB$ for all invertible elements $x\in \cA_\Ga^\times$. Therefore, we have a transitive action of the abelian group $\cA_\Ga^\times$ on $\fB$.
\end{proof}
We keep the notation of the proposition and we fix $i\in \{1,\ldots,n\}$.   The matrices in $\{\mB\croc{i}\mid \mB\in \fB\}$  
 are either all invertible or all not invertible. When they are all invertible, there exists a unique basis $\mB$ such that $(\cA_\Ga,\mB)$ is a matrix realisation of~$\Ga$ and $\mB\croc{i}=I_n$. The graph $\Ga$ is then multiplicative at $v_i$. When they are not invertible, the graph $\Ga$ is not multiplicative at $v_i$. In other word the previous proposition implies the following result. 
 \begin{corollary}
 \label{unique_normalised_basis}
Assume that $\Ga$ is of maximal dimension. If $\Ga$ is multiplicative at $v_{i_0}$ then there exists a unique matrix representation $(\cA,\mB)$ such that $\mB\croc{i_0}=I_n$.
 \end{corollary}

\begin{remark} 
 In the proof of the previous proposition, we have used the fact  that $\func{Mat}_{\mB}(m_{A_\Ga}) = A_\Ga$ implies that $\func{Mat}_{\mB}(m_{U(A_\Ga)}) = U(A_\Ga)$ for each polynomial $U\in  \K(\sZ^{\pm 1})[X]$. In general, any $n$-dimensional algebra embeds in the algebra of its linear endomorphisms just by considering the multiplication by each element (which is a linear map). This embedding is not surjective (the algebras have dimensions $n$ and $n^{2}$). We thus warn the reader that in general, the action of a linear
endomorphism of $\mathcal{A}$ does not coincide with the multiplication by its
matrix in the basis $\mB$ (which does not necessarily commute with
$\mathcal{A})$. 
\ This is in particular the case when we consider generalised
permutation matrices associated to the basis $\mB$.
%
\end{remark}

The situation simplifies even further when the minimal polynomial $\mu_{A_\Ga}$ of the adjacency matrix of $\Ga$ is irreducible and the graph $\Ga$ is strongly connected. 
\begin{proposition}
\label{Prop_unicity_irredu}
Assume that $\Ga$ is a strongly connected graph and that $\mu_{A_\Ga}$ is irreducible over
$\K(\sZ^{\pm1})$. Then $\Gamma$ is of maximal dimension and is multiplicative at any vertex. 
\end{proposition}

\begin{proof}
Consider the Frobenius reduction of the matrix $A_\Ga$ in the field $\K(\sZ^{\pm1})$.
Denote its invariant factors by $\mu_{1}=\mu_{A_\Ga},\mu_{2},\ldots,\mu_{r}$ with
$\mu_{i+1}\mid\mu_{i}$ for any $i=1,\ldots,r-1$. Since $\mu_{1}$ is
irreducible we must have $\mu_{1}=\mu_{2}=\cdots=\mu_{r}$.\ The matrix $A_\Ga$ is
conjugate in $\K(\sZ^{\pm1})$ to a matrix $B$ with $r$ identical blocks equal to
$\mathcal{C}_{\mu_{A_\Ga}}$ the companion matrix of $\mu_{A_\Ga}$. Let us write
$B=PAP^{-1}$. The matrix $P$ has entries in  $\K(\sZ^{\pm1})$ so it can be
written under the form $P=\frac{1}{d}P'$ where~$P'$ has
coefficients in~$\K[\sZ^{\pm1}]$ and $d\in\K[\sZ^{\pm 1}]$. As a consequence, there
exists a specialisation $f_{\theta}$ such that~$f_{\theta}(z)>0$ for any
$z\in  \sZ$ and $f_{\theta}(d)\neq0$. The matrix~$A_{\theta}$ (the specialisation of $A_\Ga$ via $\theta$) remains irreducible because $\Gamma$ is strongly connected,
thus its Perron-Frobenius eigenvalue~$\lambda$ corresponds to an eigenspace of
dimension $1$.\ This implies that $r=1$.\ Indeed, $A_{\theta}$ is
conjugate in $\mathbb{C}$ to the matrix $B_{\theta}$  (the specialisation of $B$ via $\theta$)  with $r$ identical
blocks, each of them having the eigenvalue~$\lambda$ because it is a root of
$\theta(\mu_{A_\Ga})$. Since $r=1$, the minimal polynomial $\mu_{A_\Ga}$ has degree
$n$ and we can apply Proposition~\ref{Prop_Unicity} to get a basis
$\mB$ such that $(\cA_\Ga,\mB)$ is a matrix representation of $\Ga$. Further since 
$\K(\sZ^{\pm1})[A_\Ga]\simeq \K(\sZ^{\pm1})[X]/\langle\mu_{A_\Ga}\rangle$ we know that $\K(\sZ^{\pm1})[A_\Ga]$ is a field. All the elements of $\mB$ are non-zero (otherwise $\mB$ is not a basis), hence they are invertible in $\K(\sZ^{\pm1})[A_\Ga]$ and this implies that $\Ga$ is multiplicative at any vertex.
\end{proof}

Recall that the coefficient $m_{i,j}= m_{i_0\to i}^{j}$ is the sum
of the weights of all paths in $\Ga$ of length $j$ from the vertex $v_{i_{0}}$ to the vertex $v_{i}$ and that $M_{i_0} = (m_{i_0\to i}^{j})_{1\leq i,j\leq n}$. 

\smallskip

The matrix $M_{i_0}$ gives
a simple combinatorial criterion to decide whether or not the matrix $\mB\croc{i_{0}}$ is invertible in a matrix realisation $(\cA,\mB)$ of $\Ga$ (recall that this does not depend on the matrix realisation). Further, when  $\mb_{i_{0}}$ is invertible, the matrix $M_{i_0}$ allows one to compute the corresponding unique normalised basis $\mB$ such that $\mB\croc{i_0}=I_n$ (see Corollary \ref{unique_normalised_basis}).
\begin{theorem}
\label{ThGG1}
 The graph $\Gamma$ is of  maximal dimension and is multiplicative at $v_{i_0}$ if and only if the matrix $M_{i_{0}}$ is invertible. Moreover, in this case, the
columns of $M_{i_{0}}^{-1}$ give the vectors of the normalised basis
$\mB=\{\mb_{1},\ldots,\mb_{i_{0}}=1,\ldots
,\mb_{n}\}$ expressed
in the basis $\{1,A_\Ga,\ldots,A_\Ga^{n-1}\}$. In particular, the entries of the matrices in the basis $\mB$ belong to~$\frac{1}{\det M_{i_{0}}}\K[\sZ^{\pm1}]$.
\end{theorem}

\begin{proof}
Assume that $\Ga$ is of maximal dimension and is multiplicative at $v_{i_0}$. Then there exists a basis $\mB=\{\mb_{1},\ldots,\mb_{n}\}$ of $\cA_\Ga$ such that $(\cA_\Ga,\mB)$ is a matrix realisation of $\Ga$ and $\mb_{i_0}=I_n$. Using (\ref{Transfert}) we get
\begin{equation*}
A_\Ga^{j}\mb_{i_{0}}=A_\Ga^{j}=\sum_{i=1}^{n}m_{i_0\to i}^{j}\mb_{i}. \label{A in B}%
\end{equation*}
Since $\dim\cA_\Ga = n$, this equation shows that the matrix $M_{i_0}$ is the change of basis matrix from the basis $\{I_n,A_\Ga,\ldots,A_\Ga^{n-1}\}$ to the basis $\mB$.
 Therefore, it is invertible and the columns of $M_{i_{0}}^{-1}$ express the vectors $\mB_{i_{0}}$ in the basis $\{I_n,A_\Ga,\ldots,A_\Ga^{n-1}\}$.

Assume now that $M_{i_{0}}$ is invertible. Let $(x_{0},\ldots,x_{n-1})\in \K(\sZ^{\pm1})^{n}$ such that $\sum
_{j=0}^{n-1}x_{j}A_\Ga^{j}=0$. The $j$-th column $C_{j}$ of $M_{i_0}$
coincides with the $i_{0}$-th column of $A_\Ga^{j}$ thus we have $\sum_{j=0}^{n-1}%
x_{j}C_{j}=0$ and since $M_{i_0}$ is invertible, we must have $x_i=0$ for all $i$. It follows that  $\dim \cA_\Ga=n$ and $\Ga$ is of maximal dimension.  By Proposition \ref{Prop_Unicity}, there exists a basis $\mB$ such that $(\cA_\Ga,\mB)$ is a matrix realisation of $\Ga$. The family $\{A_\Ga^{j}\mb_{i_{0}}\mid j=0,\ldots,n-1\}$ is also a basis of $\cA_\Ga$ (the change of basis matrix from $\{A_\Ga^{j}\mb_{i_{0}}\mid j=0,\ldots,n-1\}$ to $\mB$ is the invertible matrix $M_{i_0}$). 
But this can only be true if $\mb_{i_{0}}$ is invertible. Therefore $\Ga$ is multiplicative at $v_{i_0}$. 
\end{proof}
 
 \begin{remark}
 \label{compute}
Theorem \ref{ThGG1} provides a simple procedure to (1) decide whether or not a graph $\Ga$ of maximal dimension is multiplicative at a given vertex and (2) compute the unique corresponding matrix realisation. For instance, we can compute explicitely all the structure constants of the homology ring of affine Grassmannians for the affine Weyl groups of type $G_2$ \cite{G2}.

\end{remark}

The corollary below follows easily from the previous theorem and Proposition \ref{Prop_unicity_irredu}.
\begin{corollary}
Assume that $\Ga$ is strongly connected and that $\mu_{A_\Ga}$ is irreducible. Then the matrix $M_{i_0}$ is invertible for all $i_0$. 
\end{corollary}

\begin{example}
\label{Counterexample} 
Consider the graph $\Gamma$ with adjacency matrix $A_\Gamma$ given as follows:

\medskip

\begin{minipage}{8cm}
$$
\begin{tikzpicture}[scale =1]
\tikzstyle{vertex}=[inner sep=2pt,minimum size=10pt,ellipse,draw]

\node[vertex] (a1) at (0,0) {\scalebox{.7}{$v_1$}};
\node[vertex] (a2) at (1,-1) {\scalebox{.7}{$v_2$}};
\node[vertex] (a3) at (-1,-1) {\scalebox{.7}{$v_3$}};

\draw[->] (a1) edge node  {} (a2);
\draw[->] (a1) edge node{} (a3);
\draw[<->] (a2) edge node{} (a3);

\draw[->]  (a2) edge[bend right = 30] node[right]{\scalebox{.7}{$z_2$}} (a1);
\draw[->]  (a3) edge[bend left = 30] node[left]{\scalebox{.7}{$z_1$}} (a1);
\end{tikzpicture}$$
\end{minipage}
\begin{minipage}{4cm}
$$
A_\Ga=\left(
\begin{array}
[c]{ccc}%
0 & z_{1} & z_{2}\\
1 & 0 & 1\\
1 & 1 & 0
\end{array}
\right)
.$$
\end{minipage}

\medskip

\noindent
The graph $\Ga$ is strongly connected and we have $\mu_{A}(X)=\left(  X+1\right)  \left(  X^{2}-X-z_{1}-z_{2}\right)$ thus $\Ga$ is of maximal dimension. We compute
\[
M_{1}=\left(
\begin{array}
[c]{ccc}%
1 & 0 & z_{1}+z_{2}\\
0 & 1 & 1\\
0 & 1 & 1
\end{array}
\right)  ,\ M_{2}=\left(
\begin{array}
[c]{ccc}%
0 & z_{1} & z_{2}\\
1 & 0 & z_{1}+1\\
0 & 1 & z_{1}%
\end{array}
\right)  ,\ M_{3}=\left(
\begin{array}
[c]{ccc}%
0 & z_{2} & z_{1}\\
0 & 1 & z_{2}\\
1 & 0 & z_{2}+1
\end{array}
\right)
\]
and%
\[
\det M_{1}=0,\ \det(M_{2})=z_{2}-z_{1}^{2},\ \det(M_{3})=z_{2}^{2}-z_{1}.
\]
It follows that $\Gamma$ is multiplicative at $v_{2}$ (respectively at $v_{3}$) if and only if
$z_{2}\neq z_{1}^{2}$ (respectively $z_{1}\neq z_{2}^{2}$). It is not multiplicative at $v_1$. 
Observe that when
$z_{1}=z_{2}=1$, $\Gamma$ is not multiplicative but, according to Proposition \ref{Prop_Unicity}, it admits a matrix realisation. 
\end{example}

\begin{example}
\label{Exam_kSchur3} Consider the graph $\Gamma$ with adjacency matrix $A_\Gamma$ given as follows:

\medskip

\begin{minipage}{8cm}
$$
\begin{tikzpicture}[scale =1]
\tikzstyle{vertex}=[inner sep=2pt,minimum size=10pt,ellipse,draw]

\node[vertex] (a0) at (0,0) {\scalebox{.7}{$v_1$}};
\node[vertex] (a1) at (0,-1) {\scalebox{.7}{$v_2$}};
\node[vertex] (a2) at (-1,-2) {\scalebox{.7}{$v_3$}};
\node[vertex] (a3) at (1,-2) {\scalebox{.7}{$v_4$}};
\node[vertex] (a4) at (0,-3) {\scalebox{.7}{$v_5$}};
\node[vertex] (a5) at (0,-4) {\scalebox{.7}{$v_6$}};

\draw[->] (a0) edge node  {} (a1);
\draw[->] (a1) edge node{} (a2);
\draw[->] (a1) edge node{} (a3);
\draw[->] (a2) edge node{} (a4);
\draw[->] (a3) edge node{} (a4);
\draw[->] (a4) edge node{} (a5);

\draw[->]  (a2) edge[bend left = 30] node[left]{\scalebox{.7}{$z_1$}} (a0);
\draw[->]  (a3) edge[bend left = -30] node[right]{\scalebox{.7}{$z_3$}} (a0);

\draw[->]  (a4) edge[bend left = 30] node[right,pos=.3]{\scalebox{.7}{$z_2$}} (a0);
\draw[->]  (a5) edge[bend right = 30] node[right,pos=.3]{\scalebox{.7}{$z_2$}} (a1);

\draw[->]  (a5) edge[bend left = 30] node[left]{\scalebox{.7}{$z_3$}} (a2);
\draw[->]  (a5) edge[bend left = -30] node[right]{\scalebox{.7}{$z_1$}} (a3);

\end{tikzpicture}$$
\end{minipage}
\begin{minipage}{4cm}
$$
A_\Ga=\left(
\begin{array}
[c]{cccccc}%
0 & 0 & z_{1} & z_{3} & z_{2} & 0\\
1 & 0 & 0 & 0 & 0 & z_{2}\\
0 & 1 & 0 & 0 & 0 & z_{3}\\
0 & 1 & 0 & 0 & 0 & z_{1}\\
0 & 0 & 1 & 1 & 0 & 0\\
0 & 0 & 0 & 0 & 1 & 0
\end{array}
\right)
.$$
\end{minipage}

\medskip

\noindent
We get $\mu_{A}(T)=T^{6}-2\left(  z_{1}+z_{3}\right)  T^{3}-4z_{2}%
T^{2}+\left(  z_{1}-z_{3}\right)  ^{2}$ which is irreducible. The matrices
$M_{1}$ and $M_{1}^{-1}$ are respectively%
\[
\left(
\begin{array}
[c]{cccccc}%
1 & 0 & 0 & z_{1}+z_{3} & 2z_{2} & 0\\
0 & 1 & 0 & 0 & z_{1}+z_{3} & 4z_{2}\\
0 & 0 & 1 & 0 & 0 & z_{1}+3z_{3}\\
0 & 0 & 1 & 0 & 0 & 3z_{1}+z_{3}\\
0 & 0 & 0 & 2 & 0 & 0\\
0 & 0 & 0 & 0 & 2 & 0
\end{array}
\right)  \text{\ and }\left(
\begin{array}
[c]{cccccc}%
1 & 0 & 0 & 0 & -\frac{z_{1}+z_{3}}{2} & -z_{2}\\
0 & 1 & \frac{2z_{2}}{z_{1}-z_{3}} & \frac{2z_{2}}{z_{3}-z_{1}} & 0 &
-\frac{z_{1}+z_{3}}{2}\\
0 & 0 & \frac{3z_{1}+z_{3}}{2z_{1}-2z_{3}} & \frac{z_{1}+3z_{3}}{2z_{3}%
-2z_{1}} & 0 & 0\\
0 & 0 & 0 & 0 & \frac{1}{2} & 0\\
0 & 0 & 0 & 0 & 0 & \frac{1}{2}\\
0 & 0 & \frac{-1}{2z_{1}-2z_{3}} & \frac{-1}{2z_{3}-2z_{1}} & 0 & 0
\end{array}
\right)  \allowbreak.
\]

\medskip

\noindent
The graph $\Gamma$ is multiplicative at $v_{1}$. One can check that all the entries in $\{\mb_{1}=I_n,\ldots,\mb_{6}\}$ belong to $\mathbb{Z}%
[z_{1},z_{2},z_{3}]$. For example, we have
\[
\mb_{3}=\frac{1}{z_{1}-z_{3}}\left(  2z_{2}A_\Ga+\frac{1}{2}(3z_{1}+z_{3}%
)A_\Ga^{2}-\frac{1}{2}A_\Ga^{5}\right)  =\left(
\begin{array}
[c]{cccccc}%
0 & z_{1} & z_{2} & 0 & 0 & z_{1}z_{3}\\
0 & 0 & z_{1} & 0 & z_{2} & 0\\
1 & 0 & 0 & 0 & 0 & 0\\
0 & 0 & 0 & 0 & z_{1} & z_{2}\\
0 & 1 & 0 & 0 & 0 & z_{1}\\
0 & 0 & 0 & 1 & 0 & 0
\end{array}
\right). \]
\end{example}

Each element in $\cA_\Ga$ can be regarded as a linear map of
$\K(\sZ^{\pm 1})^{n}=\oplus_{i=0}^{n-1}\K(\sZ^{\pm 1})e_{i}$ where $(e_1,\ldots,e_n)$ is the canonical basis of $\K(\sZ^{\pm 1})^n$. In this context, saying that $M_{i_0}$ is invertible is equivalent to saying that the vector $e_{i_0}$ is cyclic\footnote{Recall that a vector $v$ in
	$K^{n}$ is cyclic for $A$ when $\{A^{m}v\mid0\leq m<n\}$ is a basis of $K^{n}$. Equivalently, the linear map $K[A]\rightarrow K^{n}$ sending any $B$ on $Bv$ is injective.}
for $A_\Ga$. Indeed, for any $1\leq k\leq n$, the vector $A_\Ga^{k}e_{i_{0}}$ is given by the $i_{0}$-th
column of $A_\Ga^{k}$ which coincides with the $k$-th column of $M_{i_{0}}$. Thus
$M_{i_{0}}$ is invertible if and only if $e_{i_{0}}$ is cyclic.

\begin{theorem}
\label{ThGG2} 
Assume that $\Ga$ is strongly connected, multiplicative at $v_{i_0}$ and of maximal dimension. Let $\mB=\{\mb_{1},\ldots,\mb_{n}\}$ be the unique basis of $\cA_\Ga$ such that $\mb_{i_0}=I_n$ and let $c_{i,j}^k$ be the structure constants $\cA_\Ga$ with respect to $\mB$. We have
\begin{enumerate}
\item $\mb_{i}%
(e_{i_{0}})=e_{i}$ for any $i=1,\ldots,n$, that is the $i_{0}$-th column of
$\mb_{i}$ is equal to $e_{i}$.
\item The $j$-th column of the matrix $\mb_{i}$ is equal to $(c_{i,j}^k)_{k=1,\ldots,n}$.
\item For any $1\leq i,j\leq n$, the $j$-th column of $b_{i}$ is
equal to the $i$-th column of $b_{j}$.
\item The entries of the matrices in the
basis $\mB$ and the coefficients $c_{i,j}^{k}$ all belong to
$\frac{1}{\det M_{i_{0}}}\K[\sZ^{\pm1}]$.
\end{enumerate}
\end{theorem}

\begin{proof}
We prove (1). Define the matrix $C$ by the equations $Ce_{i}=\mb_{i}(e_{i_{0}})$ for all $i\in \{1,\ldots,n\}$. To prove that
$C$ belongs to $\cA_\Ga$, it suffices to show that $C$ commutes with $A_\Ga$
($\Ga$ is of maximal dimension therefore the centralizer of $A_\Ga$ is $\cA_\Ga$). Let $j\in\{1,\ldots,n\}$. On the one hand, we have
\[
CA_\Ga e_{j}=C\left(  \sum_{i=1}^{n}a_{i,j}e_{i}\right)  =\sum_{i=0}^{n-1}%
a_{i,j}Ce_{i}=\sum_{i=0}^{n-1}a_{i,j}\mb_{i}e_{i_{0}}.
\]
On the other hand%
\[
A_{\Ga}Ce_{j}=(A_\Ga \mb_{j})e_{i_{0}}=\left(  \sum_{i=0}^{n-1}a_{i,j}\mb_{i}\right)
e_{i_{0}}=\sum_{i=0}^{n-1}a_{i,j}\mb_{i}e_{i_{0}}%
\]
where we use the fact that $\func{Mat}_{\mB}(m_{A_\Ga})= A_\Ga$  in the second equality. The matrix $C$ is
invertible because the set $\{\mb_{i}e_{i_{0}}\mid1\leq i\leq n\}$ is a basis of
$\K(\sZ^{\pm 1})^{n}$.\ Indeed, let $(x_{1},\ldots,x_{n})\in \K(\sZ^{\pm 1})^n$ be such that $\sum_{1\leq i\leq n}x_{i}\mb_{i}e_{i_0}=0$. Then $\sum_{1\leq i\leq n}x_{i}\mb_{i}=0$ because~$e_{i_{0}}$ is a cyclic vector for $A_\Ga$ but this implies that $x_i=0$ for all $i$. This
shows that $(\cA_\Ga,\mB')$ where $\mB'=C^{-1}\mB=(\mb'_1,\ldots,\mb'_n)$ is a matrix realisation of $\Ga$. Moreover we have \[
\mb'_{i}e_{i_{0}}=C^{-1}\mb_{i}e_{i_{0}}=C^{-1}Ce_{i}=e_{i}\qu{for all $i\in \{1,\ldots,n\}$.}
\]
It follows that $\mb'_{i}\mb'_{i_{0}}(e_{i_{0}})=\mb'_{i}(e_{i_{0}})=e_{i}$ 
for all $i\in \{1,\ldots,n\}$ and that $\mb'_{i}\mb'_{i_{0}} = \mb'_{i}$ since $e_{i_0}$ is cyclic for $A_\Ga$. Since $\mB'$ is a basis
of $\cA_\Ga$, this
implies that $\mb'_{i_{0}}=I_n$. By Corollary \ref{unique_normalised_basis}, we get $\mB=\mB'$ and  $C=I_n$ as desired.

\smallskip


We prove (2). We have
\[
\mb_{i}\mb_{j}e_{i_{0}}=\sum_{k=1}^{n}c_{i,j}^{k}\mb_{k}e_{i_{0}}%
\Longleftrightarrow \mb_{i}e_{j}=\sum_{k=0}^{n-1}c_{i,j}^{k}e_{k}%
\]
hence the result.

\smallskip 

Assertion (3) follows from the fact that $\mb_{i}\mb_{j}=\mb_{j}\mb_{i}$ in the commutative
algebra $\cA_\Ga$.

\smallskip 

We prove (4). Since $\Gamma$ is multiplicative at $v_{i_{0}}$ and is of maximal dimension, Theorem \ref{ThGG1} implies that the matrix $M_{i_0}$ is invertible and that the columns of $M_{i_0}$ belong to $\frac{1}{\det M_{i_{0}}}\K[\sZ^{\pm1}]$. Assertion (2) implies that the coefficients $c_{i,j}^k$ also belong to $\frac{1}{\det M_{i_{0}}}\K[\sZ^{\pm1}]$. 
\end{proof}

\section{Positively multiplicative graph}

\label{subset_rootmove}

In this section, $\Ga$ is a graph in $\sG{n}{\R_+[\sZ^{\pm 1}]}$ with set of vertices $\{v_1,\ldots,v_n\}$.

\subsection{Definitions and examples}
\begin{definition}
\label{Def_PMGraphs}
\begin{enumerate}
\item We say that $\Ga$ is \emph{positively multiplicative} at $v_{i_0}$  if there exists a matrix realisation $(\cA,\mB)$ of $\Ga$ such that $\cA$ is a PM algebra with respect to $\mB$ and $\mB\croc{i_0}=I_n$. We then say $\Ga$ is PM at $v_{i_0}$ with respect to the matrix realisation $(\cA,\mB)$.
\item We say that $\Ga$ is PM if there exists $v_{i_0}$ such that $\Ga$ is PM at $v_{i_0}$.  
\item The set $\cR^+_\Ga$ of \emph{positive roots} of $\Ga$ is the set of vertex $v_{i_0}$ such that $\Ga$ is PM  at~$v_{i_0}$.
\end{enumerate}
\end{definition}

\begin{remark}
Assume that $\Ga$ is PM with respect to the matrix realisation $(\cA,\mB)$. Then the algebra $\cA$ can be defined over the ring $\K[\sZ^{\pm 1}]$ and we have 
$$\cA = \bigoplus \K[\sZ^{\pm 1}] \mb_i \qu{where $\mB=\{\mb_1,\ldots,\mb_n\}$.}$$
Therefore, we can  specialise the indeterminates in $\sZ$ to any positive real numbers.
\end{remark}

Given a graph $\Ga$, it is not easy in general to decide whether or not it is (positively) multiplicative. In the case where $\Ga$ is of maximal dimension, one can use Theorem  \ref{ThGG1}  to decide whether or not the  graph is multiplicative and, when it is, to compute the unique corresponding matrix realisation $(\cA,\mB)$. Then, according to Theorem \ref{ThGG2}, the graph $\Ga$ will be positively multiplicative if and only if all the coefficients appearing in the matrices in basis $\mB$ are in $\R_+[\sZ^{\pm 1}]$. 

\smallskip

It is however fairly easy to contruct PM graph from PM algebras. Indeed, let $\mathcal{A}$ be a finite dimensional PM algebra with PM basis $\mB=\{1=\mb_{1},\mb_{2},\ldots,\mb_{n}\}$ and let 
\begin{equation}
s=\sum_{i=1}^{n}\beta_{i}\mb_{i}\text{ where }\beta=(\beta_{1},\ldots
,\beta_{n})\in\mathbb{R}_{+}^{n}. \label{generator_s}%
\end{equation}
Let $\Gamma$ be the graph with set of vertices $\{v_{1},\ldots,v_{n}\}$ defined by the equation $A_\Ga = \func{Mat}_{\mB}(m_s)$ (recall here that $m_s$ is the left multiplication by $s$). In general the graph $\Gamma$ is not strongly connected
or even connected.\ To insure the strong connectivity, we need additional
hypotheses on the algebra $\mathcal{A}$. For example, if $\mathcal{A}$ is
strongly positive and $\beta\in(\mathbb{R}_{+}^\ast)^{n}$, the graph $\Gamma$ is
strongly connected and coincides in fact with a complete weighted graph. 

\smallskip

We now give a series of example of positively multiplicative graphs.

\begin{example} In this example, we show that the set of roots $\cR_\Ga$ and the set of positive roots $\cR_\Ga^+$ can be different. Consider the graph $\Gamma$ with adjacency matrix $A_\Gamma$ given as follows where $q>0$:
$$
\begin{minipage}{8cm}
\begin{center}
\begin{tikzpicture}[scale =1.3]
\def\rx{.866}
\def\ry{1}
\tikzstyle{vertex}=[inner sep=2pt,minimum size=10pt,circle,draw]
\node[vertex] (a1) at (0,0){$v_1$};
\node[vertex] (a2) at (-\rx,-\ry){$v_2$};
\node[vertex] (a3) at (\rx,-\ry){$v_3$};

\draw[line width=.5pt,->] (a1) to (a2);
\draw[line width=.5pt,->] (a2) to (a3);
\draw[line width=.5pt,->] (a3) to (a1);
\draw[line width=.5pt,->,bend left= 30] (a3) to node[below]{{\footnotesize $q$}} (a2);

\end{tikzpicture}
\end{center}
\end{minipage}
\begin{minipage}{4cm}
\[
A_\Ga=\left(
\begin{array}
[c]{ccc}%
0 & 0 & 1\\
1 & 0 & q\\
0 & 1 & 0
\end{array}
\right)
\]
\end{minipage}
$$
We compute 
$$M_1=\begin{pmatrix}
1&0&0\\
0&1&0\\
0&0&1
\end{pmatrix},\quad
M_2=\begin{pmatrix}
0&0&1\\
1&0&q\\
0&1&0
\end{pmatrix}\qu{and}
M_3=\begin{pmatrix}
0&1&0\\
0&q&1\\
1&0&q
\end{pmatrix}.$$
All these matrices are invertible, hence showing that $\cR_\Ga = \{v_1,v_2,v_3\}$ by Theorem \ref{ThGG1}. Further, for $i=1,2,3$, there exists a unique matrix realisation $(\cA_\Ga,\mB_i)$ such that $\mB_i\croc{i} =I_3$. Again by Theorem \ref{ThGG1}, we get
$$\begin{array}{llllcc} 
\mB_1 &= \left\{ 
I_3,A_\Ga,A_\Ga^2
\right\},\vspace{.2cm}\\
\mB_2 &= \left\{ 
-qI_3+A_\Ga^2,I_3,A_\Ga
\right\},\vspace{.2cm}\\
\mB_3 &= \left\{ 
q^2+A_\Ga-qA_\Ga^2,-q+A_\Ga^2,I_3
\right\}.\\
\end{array}
$$
Both matrices $-qI_3+A_\Ga^2$ and $-q+A_\Ga^2$ have negative coefficients, hence by Theorem \ref{ThGG2}, the graph~$\Ga$ is not PM at $v_2$ and $v_3$. It is PM at $v_1$, hence showing that $\cR_\Ga^+=\{v_1\}$.
\end{example}
%
%

\begin{example}
\label{exa1}
In Example \ref{Exam_kSchur3}, one can check that the matrices of the
elements in the given basis have coefficients in $\R
_{+}[z_{1},z_{2},z_{3}]$. Thus the graph is positively multiplicative at
$v_{1}$. 
\end{example}

\begin{example}
We give an example of a positively multiplicative graph which is not of
maximal dimension. Recall that the characters of the symmetric group
$\mathfrak{S}_{n}$ are parametrised by the partitions $\lambda$ of $n$. Let
$\chi_{\lambda}$ be the character associated to $\lambda$. It was proved by Gamba and Radicati \cite{GamRad} (see also \cite[Sec. 7.13]{Ham} for a simpler proof) that%
\[
\chi_{\lambda}\times\chi_{(n-1,1})=(l_{\lambda}-1)\chi_{\lambda}+\sum_{\mu
\neq\lambda}\chi_{\mu}%
\]
where the sum runs over the partitions $\mu$ obtained by moving a box in the
Young diagram of $\lambda$ and $l_{\lambda}$ is the number of distinct parts
in $\lambda$. The algebra $\mathcal{A}$ of characters for $\mathfrak{S}_{n}$
is PM for the basis $\mB=\{\chi_{\lambda}\mid n\vdash \lambda\}$. This
implies that the graph $\Gamma$ whose adjacency matrix is that of
the multiplication by $\chi_{(n-1,1)}$ in the basis~$\mB$, called the Hamermesh graph, is PM. The
graph $\Gamma$ is not of maximal dimension in general. For example, for $n=4$,
we get the following graph and adjacency matrix 
$$
\begin{minipage}{8cm}
\begin{center}
\begin{tikzpicture}[scale =1.3]
\def\rx{1.1}
\def\ry{0.866}
\tikzstyle{vertex}=[inner sep=2pt,minimum size=.8cm]
\node[vertex] (a1) at (0,0){$\scalebox{.4}{\ydiagram{4}}$};
\node[vertex] (a2) at (0,-\rx){$\scalebox{.4}{\ydiagram{3,1}}$};
\node[vertex] (a23) at (1,-1.5*\rx){$\scalebox{.4}{\ydiagram{2,2}}$};
\node[vertex] (a3) at (0,-2*\rx){$\scalebox{.4}{\ydiagram{2,1,1}}$};
\node[vertex] (a4) at (0,-3*\rx){$\scalebox{.4}{\ydiagram{1,1,1,1}}$};

\draw[line width=.5pt,<->] (a1) to (a2);
\draw[line width=.5pt,<->] (a2) to (a3);
\draw[line width=.5pt,<->] (a3) to (a4);
\draw[line width=.5pt,->] (a2) to[out=45,in=15,looseness=5]   (a2);
\draw[line width=.5pt,->] (a3) to[out=-15,in=-45,looseness=5]   (a3);
\draw[line width=.5pt,<->] (a2) to (a23);
\draw[line width=.5pt,<->] (a3) to (a23);

\end{tikzpicture}
\end{center}
\end{minipage}
\begin{minipage}{4cm}
\[
A=\left(
\begin{array}
[c]{ccccc}%
0 & 1 & 0 & 0 & 0\\
1 & 1 & 1 & 1 & 0\\
0 & 1 & 0 & 1 & 0\\
0 & 1 & 1 & 1 & 1\\
0 & 0 & 0 & 1 & 0
\end{array}
\right)
\]
\end{minipage}
$$
whose minimal polynomial has degree $4$.
\end{example}

In general, if we define a graph $\Gamma$ as in the example above starting from the algebra
$\mathcal{A}$ of complex characters of a finite group $G$ and its basis
$\mB$ of irreducible characters, it is known that the graph $\Gamma$
will be strongly connected if and only if the character $s$ in
(\ref{generator_s}) is the character of a faithful representation.


\subsection{Relabelling of vertices and weight changes on PM graphs}
\label{Subsec_Relabelling}
Assume that $\Ga$ is positively multiplicative graph at
$v_{i_{0}}$ with respect to the matrix realisation $(\cA,\mB)$. It is natural to
consider a larger class of graphs constructed from $\Gamma$ by authorizing the two
following variations (we will see that they preserve the property of being
positively multiplicative):

\begin{enumerate}
\item for a permutation $\sigma\in\mathfrak{S}_{n}$, switch the labels
of the vertices in $\Ga$: the label $i$ becomes~$\sigma(i)$.\ This gives a
new adjacency matrix $A^{\sigma}=T_{\sigma}^{-1}AT_{\sigma}$ where $T_{\sigma
}$ is the permutation matrix associated to $\sigma$, that is the $(i,j)$-coefficient of $T_\si$ is $\delta_{i,\sigma(j)}$.

\item for $\lambda_{1},\ldots,\lambda_{n}$ positive monomials in
$\mathbb{R}_{+}[\sZ^{\pm1}]$ and for any vertex $v_{j}$ in $\Gamma$, change each
edge $v_{j}\overset{a_{i,j}}{\rightarrow}v_{i}$ into $v_{j}\overset
{(\lambda_{j}/\lambda_{i})a_{i,j}}{\rightarrow}v_{i}$. This gives a new
adjacency matrix $A^{D}=D^{-1}AD$ where $D=\mathrm{diag}(\lambda_{1}%
,\ldots,\lambda_{n})$.
\end{enumerate}

\noindent We will denote by $\mathcal{G}_{n}$ the group of generalised permutation matrices of size $n$. These are matrices in which each row and each column
contains exactly one nonzero entry equal to a monomial in $
\R_+[\sZ^{\pm 1}]$.  
Any generalised permutation matrix of
$\mathcal{G}_{n}$ can be written under the form $P=DT_{\sigma}$. 
There is a natural action of~$\mathcal{G}_{n}$ on the set $\G{n}$: we define the graph $\Ga^P$ by the equation $A_{\Ga^P} = P^{-1}A_\Ga P$.   We are going to see that this action of $\mathcal{G}_{n}$ (by
conjugation on the adjacency matrices) restricts to the set of positively
multiplicative graphs. 

First, we have for all $j=1,\ldots,n$
\[
A\mb_{j}=\sum_{i=1}^{n}a_{i,j}\mb_{i}\Longleftrightarrow A\mb_{\sigma(j)}%
=\sum_{i=1}^{n}a_{\sigma(i),\sigma(j)}\mb_{\sigma(i)}\Longleftrightarrow
A^{\sigma}\mb_{\sigma(j)}'=\sum_{i=1}^{n}a_{i,j}^{\sigma}\mb_{\sigma
(i)}'%
\]
where $\mb'_{i}=T_{\sigma}^{-1}\mb_{i}T_{\sigma}$ for $i=1,\ldots,n$ and $A^\sigma=(a_{i,j}^{\sigma})$. Thus, by setting $\mb_{i}^{\sigma}=\mb'_{\sigma(i)}$ we obtain that the graph $\Gamma^{\sigma}$ with adjacency matrix
$A^{\sigma}$ is positively multiplicative at $v_{\sigma^{-1}(i_{0})}$ with respect to $(\cA,\mB_\si)$ where $\mB_\si=\{\mb_{i}^{\sigma}\mid
i=1,\ldots,n\}$.

Similarly, we get $A^{D}=(a_{i,j}\times\frac{\lambda_{j}}{\lambda_{i}})$ and
the equivalences%
\[
A\mb_{j}=\sum_{i=1}^{n}a_{i,j}\mb_{i}\Longleftrightarrow A\left(  \lambda
_{j}\mb_{j}\right)  =\sum_{i=1}^{n}\frac{\lambda_{j}}{\lambda_{i}}%
a_{i,j}\left(  \lambda_{i}\mb_{i}\right)  \Longleftrightarrow A^{D}\left(
\lambda_{j}\mb_{j}^{^{\prime\prime}}\right)  =\sum_{i=1}^{n}\frac{\lambda_{i}%
}{\lambda_{j}}a_{i,j}\left(  \lambda_{i}\mb_{i}^{^{\prime\prime}}\right)
\]
where $\mb_{i}^{^{\prime\prime}}=D^{-1}\mb_{i}D$ for any $i=0,\ldots,n$.\ Hence
the graph $\Gamma^{D}$ with adjacency matrix $A^{D}$ is positively
multiplicative at $v_{i_{0}}$ with respect to $(\cA,\mB_D)$ where $\mB_{D}
=\{\mb_{i}^{D}\mid i=1,\ldots,n\}$ and $\mb_{i}^{D}=\lambda_{i}D^{-1}\mb_{i}D$.\ 

\begin{proposition}
With the previous notation, for any $P=DT_{\sigma}\in\mathcal{G}_{n}$, the
graph $\Gamma$ is positively multiplicative at $v_{i_{0}}$ with respect to $(\cA,\mB)$ if and only if $\Gamma^{P}$ is positively
multiplicative at $v_{\sigma(i_{0})}$ with respect to $(\cA,\mB^P)$ where $\mB^{P}=\{\mb_{1}^{P},\ldots,\mb_{n}^{P}\}$ and $\mb_{i}^{P}=\lambda_{\sigma(i)}P^{-1}\mb_{\sigma(i)}P$ for all $i=1,\ldots,n$.
\end{proposition}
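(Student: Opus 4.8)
The plan is to factor the generalized permutation matrix as $P = DT_\sigma$ with $D=\mathrm{diag}(\lambda_0,\ldots,\lambda_{n-1})$ and then to \emph{compose} the two elementary moves whose effect on a positively multiplicative basis was recorded in the two displayed computations preceding the statement. The point of departure is the factorisation of the conjugation,
\[
P^{-1}AP = T_\sigma^{-1}\bigl(D^{-1}AD\bigr)T_\sigma = \bigl(A^{D}\bigr)^{\sigma},
\]
which says that $\Gamma^{P}=(\Gamma^{D})^{\sigma}$: one first performs the weight change encoded by $D$ and then the relabelling encoded by $\sigma$. This reduces the whole statement to applying, in this order, the two cases already treated.

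Concretely, I would start from the hypothesis that $\Gamma$ is positively multiplicative at $i_0$ for $\mathfrak{B}_{i_0}=\{b_i\}$. The weight-change computation then gives that $\Gamma^{D}$ is positively multiplicative at $i_0$ for the basis whose $i$-th element is $b_i^{D}=\lambda_i D^{-1}b_iD$. Feeding this basis into the relabelling computation, the root moves from $i_0$ to $\sigma^{-1}(i_0)$ (the index recorded in the subscript of $\mathfrak{B}_{\sigma^{-1}(i_0)}^{P}$) and the $i$-th basis element becomes $T_\sigma^{-1}b_{\sigma(i)}^{D}T_\sigma$. A one-line substitution using $DT_\sigma=P$,
\[
T_\sigma^{-1}b_{\sigma(i)}^{D}T_\sigma = \lambda_{\sigma(i)}\,T_\sigma^{-1}D^{-1}b_{\sigma(i)}DT_\sigma = \lambda_{\sigma(i)}\,P^{-1}b_{\sigma(i)}P = b_i^{P},
\]
reproduces the asserted formula; moreover the root element is $b_{\sigma^{-1}(i_0)}^{P}=\lambda_{i_0}P^{-1}b_{i_0}P=\lambda_{i_0}I$, a positive scalar multiple of the identity which one normalises to $1$.

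To confirm that it is the \emph{positive} multiplicativity (and not merely multiplicativity) that is preserved, I would compute the new structure constants directly rather than appeal to the two cases. From $b_i^{P}b_j^{P}=\lambda_{\sigma(i)}\lambda_{\sigma(j)}P^{-1}b_{\sigma(i)}b_{\sigma(j)}P$ together with $b_{\sigma(i)}b_{\sigma(j)}=\sum_k c_{\sigma(i),\sigma(j)}^{k}b_k$, regrouping along $b_l^{P}=\lambda_{\sigma(l)}P^{-1}b_{\sigma(l)}P$ yields
\[
b_i^{P}b_j^{P}=\sum_{l=0}^{n-1}\Bigl(\lambda_{\sigma(i)}\lambda_{\sigma(j)}\lambda_{\sigma(l)}^{-1}\,c_{\sigma(i),\sigma(j)}^{\sigma(l)}\Bigr)b_l^{P}.
\]
Each coefficient is the product of the original constant $c_{\sigma(i),\sigma(j)}^{\sigma(l)}\in\mathrm{k}_{+}[Z^{\pm1}]$ with the positive monomial $\lambda_{\sigma(i)}\lambda_{\sigma(j)}\lambda_{\sigma(l)}^{-1}$, hence again lies in $\mathrm{k}_{+}[Z^{\pm1}]$.

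Finally, the equivalence comes for free from invertibility: $P^{-1}=T_\sigma^{-1}D^{-1}$ again lies in $\mathcal{G}_n$ (its diagonal and permutation parts being $T_\sigma^{-1}D^{-1}T_\sigma$ and $\sigma^{-1}$), so the same construction applied to $(\Gamma^{P},P^{-1})$ sends $\mathfrak{B}_{\sigma^{-1}(i_0)}^{P}$ back to $\mathfrak{B}_{i_0}$; since all the scalings involved are by positive monomials, positivity is preserved in both directions and the two assertions are equivalent. The only genuinely delicate point is the index bookkeeping — tracking that the relabelling sends the root to $\sigma^{-1}(i_0)$ and that the monomial weight attached to the $i$-th new basis vector is $\lambda_{\sigma(i)}$ — everything else being the plain observation that multiplication by the positive monomials $\lambda_j$ cannot leave $\mathrm{k}_{+}[Z^{\pm1}]$.
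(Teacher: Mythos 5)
Your proof is correct and takes essentially the same route as the paper: the proposition there is simply the composition of the two displayed computations immediately preceding it (weight change by $D$, then relabelling by $\sigma$, matching the factorization $P^{-1}AP=T_{\sigma}^{-1}(D^{-1}AD)T_{\sigma}$), which is exactly what you carry out, and your explicit structure-constant formula $c'^{\,l}_{i,j}=\lambda_{\sigma(i)}\lambda_{\sigma(j)}\lambda_{\sigma(l)}^{-1}c^{\sigma(l)}_{\sigma(i),\sigma(j)}$ together with the inverse-$P$ argument only makes explicit what the paper leaves implicit. Note also that your placement of the new root at $\sigma^{-1}(i_{0})$ (and your handling of the normalization $b^{P}_{\sigma^{-1}(i_{0})}=\lambda_{i_{0}}I$) is consistent with the paper's own computations and with the subscript in $\mathfrak{B}_{\sigma^{-1}(i_{0})}^{P}$; the ``at $\sigma(i_{0})$'' in the statement appears to be a typo.
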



\begin{remark}
\label{Remark_G}When $P$ commutes with each element of the algebra
$\cA_\Ga$ (thus in particular with the adjacency matrix $A_\Ga$), we get
$\mb_{i}^{P}=\lambda_{\sigma(i)}\mb_{\sigma(i)}$ and thus $\mB^{P}$ is just the image of $\mB$ by the
generalised permutation matrix $P$.
\end{remark}

\section{Column and row Kirillov-Reshetikhin crystals of affine type A}

The goal of this section is to present two particular families of positively
multiplicative graphs that are closely connected with the representation theory of affine quantum groups (see \cite{BumpSchilling2017}). In contrast to their combinatorial definition which is
very simple, their associated multiplicative algebras have sophisticated
structure constants for which no simple combinatorial description is known.
They will provide us interesting illustrations of the notions and results
presented in the previous sections of the paper. 
\subsection{Partitions, column and row tableaux}

A partition of length $k$ is a sequence of integers~$\lambda=(\lambda_{1}%
\geq\cdots\geq\lambda_{k}\geq0)$. It is conveniently identified with its Young
diagram as illustrated below
\[
\lambda=(5,3,3,2)\longleftrightarrow%
\scalebox{.8}{\ydiagram{5,3,3,2}} \medspace.
\]
By a column tableau of length $k$ in the alphabet $\{1,2,\ldots,n\}$, we mean a filling of the Young diagram of shape $(1,1,\ldots,1)$ (with $k$ ones) by strictly increasing integers in $\{1,2,\ldots,n\}$  from top to bottom. We denote by $\col{k}{n}$ the set of such tableaux. By a row tableau of length $k$ in the alphabet $\{1,2,\ldots,n\}$, we mean a filling of the Young diagram of shape $(k)$  by increasing integers in $\{1,2,\ldots,n\}$  from left to right. We denote by $\row{k}{n}$ the set of such tableaux.

\begin{example}
The following two tableaux are column and row tableau, respectively
\[%
\scalebox{.8}{\begin{ytableau} 1 \\ 2\\3\end{ytableau}}
\qu{and}
\scalebox{.8}{
\begin{ytableau} 1&  2 &2 &4&5\end{ytableau}} \medspace.
\]
\end{example}

\subsection{The Kirillov-Reshetikhin (KR) column crystals}

Given integers $n\geq2$ and $1\leq k\leq n$, the KR crystal $B_{k,n}^{c}$ is
the  oriented graph with vertices the set of vertices $\col{k}{n}$ and edges as follows
\begin{enumerate}
\item[{\tiny $\bullet$}] there is an edge between $C$ and $C'$  if there exists $i\in\{1,\ldots,n\}$ such that  $i\in C$, $i+1\notin C'$ and
$C'$ is the column tableau obtained by replacing $i$ by $i+1$ in $C$;
\item[{\tiny $\bullet$}] there is an edge between $C$ and $C'$  if  $n\in C$, $1 \notin C'$ and
$C'$ is the column tableau obtained by replacing $n$ by $1$ in $C$ and by reordering the entries. 
\end{enumerate}
There is a simple bijection between $
\col{k}{n}$ and the set of partitions contained in the
rectangle $(n-k)^{k}$. It associates to each column tableau $C=\{c_{k}>c_{k-1}%
>\cdots>c_{1})$ the partition $\lambda=(c_{k}-k,c_{k-1}-k+1,\ldots,c_{1}-1\}$.

\begin{example}
\begin{enumerate}
\item Assume $n=5$ and $k=2$. The set of tableaux in  $\col{2}{5}$ is 
\[%
\begin{array}
[c]{cccccccccccc}
\scalebox{.8}{\begin{ytableau} 1 \\ 2\end{ytableau}}&
\scalebox{.8}{\begin{ytableau} 1 \\ 3\end{ytableau}}&
\scalebox{.8}{\begin{ytableau} 2 \\ 3\end{ytableau}}&
\scalebox{.8}{\begin{ytableau} 1\\ 4\end{ytableau}}&
\scalebox{.8}{\begin{ytableau} 2 \\ 4\end{ytableau}}&
\scalebox{.8}{\begin{ytableau} 1 \\ 5\end{ytableau}}&
\scalebox{.8}{\begin{ytableau} 3 \\ 4\end{ytableau}}&
\scalebox{.8}{\begin{ytableau} 2 \\ 5\end{ytableau}}&
\scalebox{.8}{\begin{ytableau} 3 \\ 5\end{ytableau}}&
\scalebox{.8}{\begin{ytableau} 4 \\ 5\end{ytableau}}&
\end{array}\medspace.
\]
They correspond to the partitions (included in the box $(3,3)$):
\[%
\begin{array}
[c]{cccccccccccc}
\emptyset &
\scalebox{.8}{\ydiagram{1}}&
\scalebox{.8}{\ydiagram{1,1}}&
\scalebox{.8}{\ydiagram{2}}&
\scalebox{.8}{\ydiagram{2,1}}&
\scalebox{.8}{\ydiagram{3}}&
\scalebox{.8}{\ydiagram{2,2}}&
\scalebox{.8}{\ydiagram{3,1}}&
\scalebox{.8}{\ydiagram{3,2}}&
\scalebox{.8}{\ydiagram{3,3}}&
\end{array}\medspace.
\]
The graph $B_{2,5}^c$ and the corresponding adjacency matrix with the ordering of $\col{2}{5}$ above are

\begin{minipage}{8cm}
\begin{center}
\begin{tikzpicture}[scale =1]
\tikzstyle{vertex}=[inner sep=2pt,minimum size=10pt]

\node[vertex] (a0) at (0,0) {$\scalebox{.5}{\begin{ytableau} 1 \\ 2\end{ytableau}}$};

\node[vertex] (a1) at (0,-1) {$\scalebox{.5}{\begin{ytableau} 1 \\ 3\end{ytableau}}$};

\node[vertex] (a2) at (-1,-2) {$\scalebox{.5}{\begin{ytableau} 2 \\ 3\end{ytableau}}$};
\node[vertex] (a3) at (1,-2) {$\scalebox{.5}{\begin{ytableau} 1 \\ 4\end{ytableau}}$};

\node[vertex] (a4) at (-1,-3) {$\scalebox{.5}{\begin{ytableau} 2 \\ 4\end{ytableau}}$};
\node[vertex] (a5) at (1,-3) {$\scalebox{.5}{\begin{ytableau} 1 \\ 5\end{ytableau}}$};

\node[vertex] (a6) at (-1,-4) {$\scalebox{.5}{\begin{ytableau} 3 \\ 4\end{ytableau}}$};
\node[vertex] (a7) at (1,-4) {$\scalebox{.5}{\begin{ytableau} 2 \\ 5\end{ytableau}}$};

\node[vertex] (a8) at (0,-5) {$\scalebox{.5}{\begin{ytableau} 3 \\ 5\end{ytableau}}$};

\node[vertex] (a9) at (0,-6) {$\scalebox{.5}{\begin{ytableau} 4 \\ 5\end{ytableau}}$};

\draw[line width=1pt,->] (a0)  --  (a1);
\draw[line width=1pt,->] (a1)  --  (a2);
\draw[line width=1pt,->] (a1)  --  (a3);
\draw[line width=1pt,->] (a2)  --  (a4);
\draw[line width=1pt,->] (a3)  --  (a4);
\draw[line width=1pt,->] (a3)  --  (a5);
\draw[line width=1pt,->] (a4)  --  (a6);
\draw[line width=1pt,->] (a4)  --  (a7);
\draw[line width=1pt,->] (a5)  --  (a7);

\draw[line width=1pt,->] (a6)  --  (a8);
\draw[line width=1pt,->] (a7)  --  (a8);

\draw[line width=1pt,->] (a8)  --  (a9);

\draw[->,line width=1pt,bend right= 50] (a7) to (a0);
\draw[->,line width=1pt] (a8) .. controls (-3,-4) and (-3,-2) .. (a1);
\draw[->,line width=1pt] (a9) .. controls (4,-4) and (2,-2) .. (a3);

\end{tikzpicture}
\end{center}
\end{minipage}
\begin{minipage}{4cm}
$$
A_{B^c_{2,5}}=\left(
\begin{array}
[c]{cccccccccc}%
0 & 0 & 0 & 0 & 0 & 0 & 1 & 0 & 0 & 0\\
1 & 0 & 0 & 0 & 0 & 0 & 0 & 0 & 1 & 0\\
0 & 1 & 0 & 0 & 0 & 0 & 0 & 0 & 0 & 0\\
0 & 1 & 0 & 0 & 0 & 0 & 0 & 0 & 0 & 1\\
0 & 0 & 0 & 1 & 0 & 0 & 0 & 0 & 0 & 0\\
0 & 0 & 1 & 1 & 0 & 0 & 0 & 0 & 0 & 0\\
0 & 0 & 0 & 0 & 1 & 1 & 0 & 0 & 0 & 0\\
0 & 0 & 0 & 0 & 0 & 1 & 0 & 0 & 0 & 0\\
0 & 0 & 0 & 0 & 0 & 0 & 1 & 1 & 0 & 0\\
0 & 0 & 0 & 0 & 0 & 0 & 0 & 0 & 1 & 0
\end{array}
\right)
$$
\end{minipage}

The  minimal polynomial of $B^c_{2,5}$ is $X^{10}-11X^{5}-1$ so it is of maximal dimension.

\item When $n=6$ and $k=2$, one can check that the minimal polynomial of $B_{2,6}^{c}$ is
$X^{13}-26X^{7}-27X$ so that the graph $B^a_{2,6}$ is not of maximal dimension. 
\end{enumerate}
\end{example}

\subsection{The KR row crystals}
Given integers $n\geq2$ and $1\leq \ell\leq n$, the KR crystal $B_{\ell,n}^{r}$ is
the  oriented graph with vertices the set of vertices $\row{\ell}{n}$ and edges as follows:
\begin{enumerate}
\item[{\tiny $\bullet$}] there is an edge between $L$ and $L'$  if there exists $i\in\{1,\ldots,n\}$ such that  $i\in L$ and
$L'$ is the row tableau obtained by replacing $i$ by $i+1$ in $L$;
\item[{\tiny $\bullet$}] there is an edge between $L$ and $L'$  if  $n\in L$ and
$L'$ is the column tableau obtained by replacing~$n$ by $1$ in $L$ and by reordering the entries. 
\end{enumerate}
There is a simple bijection between $
\row{k}{n}$ and the set of partitions contained in the
rectangle $(\ell,\ldots,\ell)$ ($n-1$ terms). It associates to each row tableau $L=\{c_{k}\geq c_{k-1}%
\geq \cdots\geq c_{1})$ the partition $\lambda$ containing a column of length $c_m-1$ for all $1\leq m\leq k$. In other words, the image of $\lambda$ is $(c_k-1,\ldots,c_1-1)$.
\begin{example}
The graph $B^s_{3,3}$ is as follows. On the left we have indexed the vertices with the set $\row{k}{n}$ and on the right by the corresponding partitions using the bijection described above. 

\medskip

\begin{center}
\begin{tikzpicture}[scale =1.7]
\def\rx{0.5}
\def\ry{0.75}
\tikzstyle{vertex}=[inner sep=2pt,minimum size=10pt]

\node[vertex] (a0) at (0,0) {$\scalebox{.7}{\begin{ytableau} 1&1&1\end{ytableau}}$};

\node[vertex] (a1) at (\rx,\ry) {$\scalebox{.7}{\begin{ytableau} 1&1&2\end{ytableau}}$};
\node[vertex] (a2) at (-\rx,\ry) {$\scalebox{.7}{\begin{ytableau} 1&1&3\end{ytableau}}$};

\node[vertex] (a11) at (2*\rx,2*\ry) {$\scalebox{.7}{\begin{ytableau} 1&2&2\end{ytableau}}$};
\node[vertex] (a12) at (0*\rx,2*\ry) {$\scalebox{.7}{\begin{ytableau} 1&2&3\end{ytableau}}$};
\node[vertex] (a22) at (-2*\rx,2*\ry) {$\scalebox{.7}{\begin{ytableau} 1&3&3\end{ytableau}}$};

\node[vertex] (a111) at (3*\rx,3*\ry) {$\scalebox{.7}{\begin{ytableau} 2&2&2\end{ytableau}}$};
\node[vertex] (a112) at (1*\rx,3*\ry) {$\scalebox{.7}{\begin{ytableau} 2&2&3\end{ytableau}}$};
\node[vertex] (a122) at (-1*\rx,3*\ry) {$\scalebox{.7}{\begin{ytableau} 2&3&3\end{ytableau}}$};
\node[vertex] (a222) at (-3*\rx,3*\ry) {$\scalebox{.7}{\begin{ytableau} 3&3&3\end{ytableau}}$};

\draw[line width=.5pt,->] (a0) to   (a1);
\draw[line width=.5pt,->] (a1) to   (a11);
\draw[line width=.5pt,->] (a2) to   (a12);
\draw[line width=.5pt,->] (a11) to   (a111);
\draw[line width=.5pt,->] (a12) to   (a112);
\draw[line width=.5pt,->] (a22) to   (a122);

\draw[line width=.5pt,->] (a1) to   (a2);
\draw[line width=.5pt,->] (a11) to   (a12);
\draw[line width=.5pt,->] (a12) to   (a22);
\draw[line width=.5pt,->] (a111) to   (a112);
\draw[line width=.5pt,->] (a112) to   (a122);
\draw[line width=.5pt,->] (a122) to   (a222);

\draw[line width=.5pt,->] (a22) to   (a2);
\draw[line width=.5pt,->] (a2) to   (a0);
\draw[line width=.5pt,->] (a12) to   (a1);
\draw[line width=.5pt,->] (a112) to   (a11);
\draw[line width=.5pt,->] (a122) to   (a12);
\draw[line width=.5pt,->] (a222) to   (a22);
\end{tikzpicture}
\hspace{1cm}
\begin{tikzpicture}[scale =1.7]
\def\rx{0.5}
\def\ry{0.75}
\tikzstyle{vertex}=[inner sep=2pt,minimum size=10pt]

\node[vertex] (a0) at (0,0) {$\scalebox{1}{$\emptyset$}$};

\node[vertex] (a1) at (\rx,\ry) {$\scalebox{.5}{\ydiagram{1}}$};
\node[vertex] (a2) at (-\rx,\ry) {$\scalebox{.5}{\ydiagram{1,1}}$};
\node[vertex] (a11) at (2*\rx,2*\ry)  {$\scalebox{.5}{\ydiagram{2}}$};
\node[vertex] (a12) at (0*\rx,2*\ry) {$\scalebox{.5}{\ydiagram{2,1}}$};
\node[vertex] (a22) at (-2*\rx,2*\ry)  {$\scalebox{.5}{\ydiagram{2,2}}$};

\node[vertex] (a111) at (3*\rx,3*\ry)  {$\scalebox{.5}{\ydiagram{3}}$};
\node[vertex] (a112) at (1*\rx,3*\ry) {$\scalebox{.5}{\ydiagram{3,1}}$};
\node[vertex] (a122) at (-1*\rx,3*\ry) {$\scalebox{.5}{\ydiagram{3,2}}$};
\node[vertex] (a222) at (-3*\rx,3*\ry) {$\scalebox{.5}{\ydiagram{3,3}}$};
\draw[line width=.5pt,->] (a0) to   (a1);
\draw[line width=.5pt,->] (a1) to   (a11);
\draw[line width=.5pt,->] (a2) to   (a12);
\draw[line width=.5pt,->] (a11) to   (a111);
\draw[line width=.5pt,->] (a12) to   (a112);
\draw[line width=.5pt,->] (a22) to   (a122);

\draw[line width=.5pt,->] (a1) to   (a2);
\draw[line width=.5pt,->] (a11) to   (a12);
\draw[line width=.5pt,->] (a12) to   (a22);
\draw[line width=.5pt,->] (a111) to   (a112);
\draw[line width=.5pt,->] (a112) to   (a122);
\draw[line width=.5pt,->] (a122) to   (a222);

\draw[line width=.5pt,->] (a22) to   (a2);
\draw[line width=.5pt,->] (a2) to   (a0);
\draw[line width=.5pt,->] (a12) to   (a1);
\draw[line width=.5pt,->] (a112) to   (a11);
\draw[line width=.5pt,->] (a122) to   (a12);
\draw[line width=.5pt,->] (a222) to   (a22);

\end{tikzpicture}
\end{center}

\end{example}

\subsection{Quotients of the algebra of symmetric polynomials}
\

Let $\Lambda_{k}=\text{Sym}[x_{1},\ldots,x_{k}]=\mathbb{Q}[e_{1},\ldots,e_{k}]$ be
the algebra of symmetric polynomials in the $k$ variables $x_{1},\ldots,x_{k}%
$.\ Here,
\[
e_{m}=\sum_{1\leq i_{1}<\cdots<i_{m}\leq k}x_{i_{1}}\cdots x_{i_{m}}\text{ for
any }1\leq m\leq k\text{.}%
\]
Write $\mathcal{P}_{k}$ for the set of partitions with at most $k$ parts. It
is well-known that $\Lambda_{k}$ admits the two distinguished bases%
\[
\{e_{\lambda'}\mid\lambda\in\mathcal{P}_{k}\}\text{ and }\{s_{\lambda
}\mid\lambda\in\mathcal{P}_{k}\}
\]
where for any partition $\lambda=(\lambda_{1},\ldots,\lambda_{k})$ we have
$e_{\lambda'}=e_{\lambda_{1}'}\cdots e_{\lambda_{k}'}$ and
$s_{\lambda}$ is the Schur polynomial associated to $\lambda$. Here $\lambda'$ is the conjugate partition of $\lambda$ whose parts are the heights of the columns in its Young diagram. For any
positive integer $\ell$, consider $\mathcal{I}_{k,\ell}$ the ideal of $\Lambda_{k}$ defined
by
\[
\mathcal{I}_{k,\ell}=\langle s_{\lambda}\mid\lambda\text{ has }\ell+1\text{ columns
of height less than }k\rangle.
\]
Write $\mathcal{P}_{k}^{\ell}$ for the set of partitions in $\mathcal{P}_{k}$ with at
most $\ell$ columns of height less than $k$. Recall that for any nonnegative integer $m$, the $m$-th homogeneous symmetric polynomial is defined by 
\[
h_{m}=\sum_{1\leq i_{1}\leq\cdots\leq i_{m}\leq k}x_{i_{1}}\cdots x_{i_{m}}.
\]
\begin{lemma}
\label{Lem_Ik,l}(See \cite{LM} Section 4) We have $I_{k,\ell}=\langle h_{\ell+1},\ldots,h_{\ell+k-1}\rangle$.
\end{lemma}
We define the ideals
\[
I_{k,\ell}^{q}=\langle h_{\ell+1},\ldots,h_{\ell+k-1},h_{\ell+k}-(-1)^{k-1}q\rangle\subset \mathbb{Q}[q]\otimes\Lambda_k \text{
and }J_{k,\ell}=\langle h_{\ell+1},\ldots,h_{\ell+k-1},e_{k}-1\rangle
\]
and set
\[
\mathcal{Q}_{k,\ell}:=\Lambda_{k}/\mathcal{I}_{k,\ell},\mathcal{A}_{k,\ell}%
^{q}:=\mathbb{Q}[q]\otimes\Lambda_k/I_{k,\ell}^{q}\text{ and }\mathcal{S}_{k,\ell}:=\Lambda
_{k}/J_{k,\ell}.
\]
Clearly the algebras $\mathcal{Q}_{k,\ell}$ and $\mathcal{A}_{k,\ell}^{q}$ are
isomorphic by sending $h_{\ell+k}\mod \mathcal{I}_{k,\ell}$ on
$(-1)^{k-1}q$. They were considered in \cite{BCF} and \cite{LM} from where we
can extract some of their important properties.

\begin{theorem}
\ \label{Th_Summ}(see \cite{LM}) Set $n=\ell+k$.

\begin{enumerate}
\item The set $\mB_{k,\ell}^{a}=\{b_{\lambda}:=s_{\lambda}%
\operatorname{mod}\mathcal{A}_{k,\ell}^{q}\mid\lambda\subset \ell^{k}\}$ is a
basis of $\mathcal{A}_{k,\ell}^{q}$ so that $\dim
\mathcal{A}_{k,\ell}^{q}=\binom{n}{k}$.

\item After specialising $q=1$, we have in $\mathcal{A}_{k,\ell}^{1}:=\mathcal{A}_{k,\ell}^{q}/\langle q=1\rangle$ for any $\lambda\subset
\ell^{k}$%
\[
b_{(1)}\cdot b_{\lambda}=\sum_{\nu\subset \ell^{k}}b_{\nu}%
\]
where $\nu$ is obtained by adding one box to $\lambda$ or by deleting the first
row and the first column when $\lambda_{1}=\ell$ and $\lambda$ has $k$ rows (i.e. by
removing the unique possible hook of length $\ell+k-1$ if any). In particular, in
the algebra $\mathcal{A}_{k,\ell}^{1}$, the matrix of the multiplication by
$b_{(1)}$ in the basis $b_{\lambda}$ is the adjacency matrix of the KR-crystal
$B_{k,n}$.

\item The structure constants of $\mathcal{A}_{k,\ell}^{1}$ with respect to $\mB_{k,\ell}^{a}$  are nonnegative integers.
\end{enumerate}
\end{theorem}

\begin{corollary}
The graph $B_{k,n}^{a}$ with $n=\ell+k$ is positively multiplicative.
\end{corollary}
\begin{proof}
 The previous theorem together with Proposition \ref{m_x} (taking $\cA = \cA_{k,\ell}^{1}$, $\mB= \mB_{k,\ell}^{a}$ and $x=b_{(1)}$) shows that the graph $B_{k,n}$ is multiplicative. Since the structure constants  of $\mathcal{A}_{k,\ell}^{1}$ with respect to $\mB_{k,\ell}^{a}$  are nonnegative integers, the result follows. 
\end{proof}

Now, let us turn to the algebra $\mathcal{S}_{k,\ell}:=\Lambda_{k}/J_{k,\ell}%
=\mathcal{Q}_{k,\ell}/\langle e_{k}=1\rangle$.

\begin{theorem}
(See \cite{Beauv,BCF})In the algebra $\mathcal{S}_{k,\ell}$ the following
statements hold.

\begin{enumerate}
\item The set $\mB_{k,\ell}^{s}=\{b_{\lambda}\operatorname{mod}%
J_{k,\ell}\mid\lambda\subset \ell^{k-1}\}$ is a basis of $\mathcal{S}%
_{k,\ell}$ so that $\dim\mathcal{S}_{k,\ell}=\binom{\ell+k-1}{\ell}$.

\item In $\mathcal{S}_{k,\ell}$, we have for any $\lambda\subset \ell^{k-1}$%
\[
b_{(1)}\cdot b_{\lambda}=\sum_{\nu\subset \ell^{k-1}}b_{\nu}%
\]
where $\nu$ is obtained by adding one box to $\lambda$, next by deleting a
column of height $k$ if such a column appears. In particular, in the algebra
$\mathcal{S}_{k,\ell}$, the matrix of the multiplication by $b_{1}$ in the basis
$\mB_{k,\ell}^{s}$ is the adjacency matrix of the KR-crystal labelled by
the rows of length $l$ on $\{1,\ldots,k\}$.

\item The structure constants associated of $\mathcal{S}_{k,\ell}$ with respect to $\mB_{k,\ell}^{s}$ are nonnegative integers.
\end{enumerate}
\end{theorem}

\begin{corollary}
The graph $B_{l,k}^{s}$ is positively multiplicative.
\end{corollary}
\begin{proof}
The previous theorem together with Proposition \ref{m_x} (taking $\cA = \mathcal{S}_{k,\ell}$, $\mB=\mB_{k,\ell}^{s}$ and $x=b_{(1)}$) shows that the graph $B_{l,k}^{s}$ is multiplicative. Since the structure constants  of $\mathcal{S}_{k,\ell}$ with respect to $\mB_{k,\ell}^{a}$  are nonnegative integers, the result follows. 

\end{proof}
\subsection{An example: the graph $B_{\ell,2}^{r}$}
\label{SubsecKR_Row2}
 We consider KR crystal $B_{\ell,2}^{s}$ with one row and $\ell$ columns filled with integers in $\{1,2\}$. The first examples of these graphs for $\ell=1,2$ and $3$ are :
\def\rx{1.5}
\def\ry{0.866}

\begin{center}
\begin{tikzpicture}[scale =1]

\tikzstyle{vertex}=[inner sep=2pt,minimum size=10pt]

\node[vertex] (a0) at (0,0) {$\scalebox{.7}{\begin{ytableau} 1\end{ytableau}}$};
\node[vertex] (a1) at (\rx,0) {$\scalebox{.7}{\begin{ytableau} 2\end{ytableau}}$};

\draw[line width=.5pt,->,bend left = 20] (a0) to   (a1);
\draw[line width=.5pt,->,bend left = 20] (a1) to   (a0);

\end{tikzpicture}
\end{center}

\medskip

\begin{center}
\begin{tikzpicture}[scale =1.4]

\tikzstyle{vertex}=[inner sep=2pt,minimum size=10pt]

\node[vertex] (a0) at (0,0) {$\scalebox{.7}{\begin{ytableau} 1&1\end{ytableau}}$};
\node[vertex] (a1) at (\rx,0) {$\scalebox{.7}{\begin{ytableau} 1&2\end{ytableau}}$};
\node[vertex] (a2) at (2*\rx,0) {$\scalebox{.7}{\begin{ytableau} 2&2\end{ytableau}}$};

\draw[line width=.5pt,->,bend left = 10] (a0) to   (a1);
\draw[line width=.5pt,->,bend left = 10] (a1) to   (a0);

\draw[line width=.5pt,->,bend left = 10] (a1) to   (a2);
\draw[line width=.5pt,->,bend left = 10] (a2) to   (a1);

\end{tikzpicture}
\end{center}

\medskip

\begin{center}
\begin{tikzpicture}[scale =1.4]

\tikzstyle{vertex}=[inner sep=2pt,minimum size=10pt]

\node[vertex] (a0) at (0,0) {$\scalebox{.7}{\begin{ytableau} 1&1&1\end{ytableau}}$};
\node[vertex] (a1) at (\rx,0) {$\scalebox{.7}{\begin{ytableau} 1&1&2\end{ytableau}}$};
\node[vertex] (a2) at (2*\rx,0) {$\scalebox{.7}{\begin{ytableau} 1&2&2\end{ytableau}}$};
\node[vertex] (a3) at (3*\rx,0) {$\scalebox{.7}{\begin{ytableau} 2&2&2\end{ytableau}}$};

\draw[line width=.5pt,->,bend left = 10] (a0) to   (a1);
\draw[line width=.5pt,->,bend left = 10] (a1) to   (a0);

\draw[line width=.5pt,->,bend left = 10] (a1) to   (a2);
\draw[line width=.5pt,->,bend left = 10] (a2) to   (a1);

\draw[line width=.5pt,->,bend left = 10] (a2) to   (a3);
\draw[line width=.5pt,->,bend left = 10] (a3) to   (a2);

\end{tikzpicture}
\end{center}
The adjacency matrix of $B_{\ell,2}^{s}$ is the $(\ell+1)\times
(\ell+1)$-matrix
$$A_{B_{\ell,2}^{s}} = 
\begin{pmatrix}
0&1&0&0&\ldots&0\\
1&0&1&0&\ldots&0\\
0&1&0&1&\ldots&0\\
0&0&\ddots&\ddots&\ddots&0\\
0&0&\ldots&1&0&1\\
0&0&\ldots&0&1&0
\end{pmatrix}.$$
\newcommand{\cL}{\mathcal{L}}

The algebra $\mathcal{S}_{2,\ell}$ is $\mathrm{Sym}[x_{1},x_{2}]/(e_{2}=1,h_{l+1}=0)$. Since
$e_{2}=x_{1}x_{2}$, we just have $x_{2}=x_{1}^{-1}$ and by setting $x=x_{1}$,
we see that the algebra $\mathcal{S}_{2,\ell}$ is equal to the
algebra $\cL[x^{\pm1}]$ of Laurent polynomials $P$ in the
indeterminate $x$ such that $P(x^{-1})=P(x)$. Now in $\cL[x^{\pm1}]$,
we have%
\[
h_{a}=\underset{i+j = a}{\sum_{1\leq i\leq j\leq \ell}}x^{i}x^{-j}=x^{-a}%
\sum_{i=0}^{a}x^{2i}=\frac{x^{a+1}-x^{-a-1}}{x-x^{-1}}.
\]
Therefore $\mathcal{S}_{2,\ell}$ is isomorphic to the algebra $\cL[x^{\pm1}]/\langle x^{\ell+2}-x^{-(\ell+2)}\rangle$. A simple computation shows that
\[
h_{i}\times h_{j}=\sum_{k=0}^{j}h_{i+j-2k}\qu{for all $i\geq j\in \N$.}
\]
Let $\overline{x}$ and $\mb_{i}$ be the image of $x$ and $h_i$ respectively in the quotient $\cL[x^{\pm1}]/\langle x^{\ell+2}-x^{-(\ell+2)}\rangle$ and define $\mB%
=\{\mb_{0}=1,\ldots,\mb_{\ell}\}$. We have $\overline{x}^{\ell+2}=\overline{x}^{-\ell-2}.$
Therefore, for any $0\leq a\leq \ell-1$, we get
\[
\mb_{\ell+1+a}=\frac{\overline{x}^{\ell+a+2}-\overline{x}^{-\ell-a-2}}{\overline
{x}-\overline{x}^{-1}}=\frac{\overline{x}^{-\ell-2+a}-\overline{x}^{\ell+2-a}%
}{\overline{x}-\overline{x}^{-1}}=-\mb_{\ell+1-a}.
\]
This gives assuming that  $i\geq j$:
$$\mb_{i}\mb_{j}=\mb_{i-j}+\mb_{i-j+2}+\cdots+\mb_{i+j}\text{ when }i+j\leq \ell$$
and
\begin{align*}
\mb_{i}\mb_{j}&=\sum_{k=0}^{j}\mb_{i+j-2k}=\mb_{i-j}+\cdots+\mb_{\ell}+0-\mb_{\ell}%
-\cdots-\mb_{i+j-\ell-1}\\
&=\mb_{i-j}+\cdots+\mb_{2(\ell+1)-i-j-1}\text{ when }i+j>\ell.
\end{align*}
This can be summarised by the rules%
\[
\mb_{i}\mb_{j}=%
\begin{array}
[c]{l}%
\mb_{\left\vert i-j\right\vert }+\cdots+\mb_{i+j}\text{ if }i+j\leq \ell,\\
\mb_{\left\vert i-j\right\vert }+\cdots+\mb_{2(\ell+1)-i-j-1}\text{ if }i+j>\ell.
\end{array}
\]
One may recognise the fusion rules for the fusion algebra $\widehat{su}_{2}$
(see \cite{DMS}) at level $\ell$. This construction is related to Conformal Field
Theory.\ The Perron-Frobenius eigenvalue of the adjacency matrix of $B_{\ell,2}^{c}$ is $\lambda=2\cos\frac{\pi}{l+2}$ with
normalised associated eigenvector $$\frac{1}{\sin\frac{\pi}{\ell+2}}(\sin
\frac{a\pi}{\ell+2})_{1\leq a\leq \ell+1}.$$

\section{Infinite PM graphs and harmonic functions}

The goal of this section is to present a general combinatorial construction
(called expansion) yielding infinite graphs from finite ones. To
simplify the exposition, we will also assume that all the algebras considered in this section
are commutative.

\smallskip

 Let $\Ga$ be a possibly infinite oriented graph with set of vertices $V$. We say that $\Ga$ is graded if there exists a partition $\{V_i\mid i\in \N\}$ of $V$ such that for any~$i\geq 1$, the edges in $\Ga$ which start at $V_i$ finish at $V_{i+1}$. In addition, when $V_0$ is a singleton, the graph $\Ga$ is said rooted and graded.

\smallskip

Recall that $\sZ=\{z_{1},\ldots,z_N\}$ with $N\in \N$ is a set (possibly empty) of formal indeterminates. For $\be=(\be_1,\ldots,\be_N)\in \Z^N$, we define $z^\beta = z_1^{\be_1}z_2^{\beta_2}\ldots z_{N}^{\beta_N}$. Then the set $\{z^\beta\mid \be\in\Z^N\}$ form a $\K$-basis of~$\K[\sZ^{\pm1}]$. 
Given $a\in \K[\sZ^{\pm1}]$, we denote by $a[\beta]$ the cofficient of $z^\beta$ in the expansion of $a$ in this basis. In other words, we have 
$$a = \sum_{\beta\in \Z^N} a[\beta] z^\be.$$
We write ${\bf 0} = (0,\ldots,0)\in \Z^N$.

\subsection{Expansion of a graph}
\label{Subsec_expansion}
In this section, $\Gamma\in \mathsf{Graph}_n(\R_+[\sZ^{\pm1}])$ denotes a finite strongly connected graph with set of vertices $V=\{v_1,\ldots,v_n\}$, edge weight function $\om$ and adjacency matrix $A_\Ga=(a_{i,j})$ in $\func{Mat}_{n}(\R_+[\sZ^{\pm 1}])$. Recall that $\om$ takes values in $\R_+[\sZ^{\pm 1}]$.

\smallskip


A
path $\pi$ of length $\ell$ on $\Gamma$ is a sequence of $\ell+1$ vertices of
$\Gamma$ with two consecutive vertices being connected by
an oriented edge. The
weight $\mathrm{wt}(\pi)$ of the path $\pi$ is the product of the weights of
the edges encountered. Therefore $\mathrm{wt}(\pi)$ is a monomial of
the form $cz^{\beta}$ with~$\beta\in \Z^N$ and $c\in \R_+$.


\begin{definition}
The expansion $\Ga_{{\sf e}}$ of $\Ga$ at $v_{i_0}$ is the rooted graded graph with vertices 
$$V_{\se}\subset \{{\sf v}^k_{i,z^\be}\mid i\in \{1,\ldots,n\},\beta\in \Z^N,\ell\in \N\}\qu{where ${\sf v}^k_{i,\be}=(v_i
,z^\beta,\ell)$}$$ and edge weight function $\om_{\se}:V_\se\times V_\se\to \R_+$ both constructed by induction as follows:
\begin{enumerate}
\item[{\tiny $\bullet$}] ${\sf v}^0_{i_0,{\bf 0}}$ is the unique vertex of $V_{\se}^0$

\item[{\tiny $\bullet$}] if ${\sf v}^k_{i,\be}\in V_{\se}^\ell$ with $\ell\geq 0$ and $\om(v_i,v_j)[\de]\in \R^\ast_+$ for some $\de\in \Z^N$ then ${\sf v}_{j,\be+\de}^{\ell+1}\in V_{\se}^{\ell+1}$
and $\om_{e}({\sf v}^k_{i,\be},{\sf v}_{j,\be+\de}^{k+1}) = \om(v_i,v_j)[\de]$.
\end{enumerate}
\end{definition}
\begin{remark}
\begin{enumerate}
\item By construction of $\Ga_\se$, for all $i\in \{1,\ldots,n\}$, $\be\in \Z^N$ and $\ell\in \N$, we have $\sv_{i,\be}^\ell\in V_\se$ if and only if there is a path $\pi$ in $\Ga$ of length $\ell$ from $v_{i_0}$ to $v_i$ such that $\wt(\pi)[\be]\neq 0$. Moreover, given $\sv_{i,\be}^\ell\in V_\se^\ell$, we have $\sv_{j,\de}^{\ell+1}\in V_\se^{\ell+1}$ if and only if we have $\om(v_i,v_j)[\de-\be]\neq 0$ (in particular there is an edge from $v_i$ to $v_j$).
\item There is a strong connection between the edge weight functions $\om_\se$ and $\om$.  More precisely, for any vertex $\sv_{i,\be}^\ell\in V_\se^\ell$ we have
\begin{align*}
\sum_{\sv_{j,\de}^{\ell+1}\in V_\se^{\ell+1}} \om_\se(\sv_{i,\be}^\ell,\sv_{j,\de}^{\ell+1})z^{\de-\be}
&=\sum_{\sv_{j,\de}^{\ell+1}\in V_\se^{\ell+1}} \om(v_i,v_j)[\de-\be]z^{\de-\be}\\
&=\sum_{j\in \{1,\ldots,n\}}\sum_{\de\in \Z^N}\om(v_i,v_j)[\de-\be]z^{\de-\be}\tag{using (1)}\\
&= \sum_{j\in \{1,\ldots,n\}} \om(v_i,v_j).
\end{align*}
\end{enumerate}
\end{remark}
\begin{example}
Let $\Gamma$ be the graph given by 
\begin{tikzpicture}[scale =1,baseline=-.1cm]
\tikzstyle{vertex}=[inner sep=2pt,minimum size=10pt,circle,draw]
\node[vertex] (a1) at (0,0){$v_1$};
\node[vertex] (a2) at (1.5,0){$v_2$};

\draw[line width=.5pt,->,bend left = 20] (a1) to node[above]{$1$} (a2);
\draw[line width=.5pt,->,,bend left = 20] (a2) to node[below]{$z$}  (a1);

\end{tikzpicture}.
The expansion $\Ga_{\se}$ of $\Ga$ at $v_1$ is
$$\begin{tikzpicture}[scale =1]
\tikzstyle{vertex}=[inner sep=2pt,minimum size=10pt]
\node[vertex] (a1) at (0,0){$\sv^0_{1,0}$};
\node[vertex] (a2) at (1.5,0){$\sv^1_{2,0}$};
\node[vertex] (a3) at (3,0){$\sv^2_{1,1}$};
\node[vertex] (a4) at (4.5,0){$\sv^3_{2,1}$};
\node[vertex] (a5) at (6,0){$\sv^4_{1,2}$};
\node[vertex] (a6) at (7.5,0){};

\draw[line width=.5pt,->,bend left = 0] (a1) to node[above]{$1$} (a2);
\draw[line width=.5pt,->,bend left = 0] (a2) to node[above]{$1$} (a3);
\draw[line width=.5pt,->,bend left = 0] (a3) to node[above]{$1$} (a4);
\draw[line width=.5pt,->,bend left = 0] (a4) to node[above]{$1$} (a5);
\draw[->,dashed] (a5) to node[above]{$1$} (a6);

\end{tikzpicture}$$
\end{example}

Assume that $\Gamma$ be a positively multiplicative with respect to the matrix realisation $(\cA,\mB)$ where $\mB=\{\mb_{1}=1,\mb_{2},\ldots
,\mb_{n}\}$. By definition $\mathcal{A}$ is an algebra over $\K(\sZ^{\pm1})$ whose structure constants $c_{i,j}^{k}$ with respect to $\mB$ belong to  $\R_+[\sZ^{\pm1}]$. In other words, we have
\[
c_{i,j}^{k}=\sum_{\beta\in\mathbb{Z}^{N}}c_{i,j}^{k}[\beta]z^{\beta}\qu{where $\beta=(\beta_1,\ldots,\beta_N)\in \Z^N$ and $c_{i,j}^{k}[\beta]\in \R_+$.}
\]
The algebra $\mathcal{A}$ can also be viewed as an infinite dimensional $\K$-algebra $
\cA_\K$ with basis $$\{z^{\beta}\mb_{i}\mid i=1,\ldots,n \text{ and }\beta\in\mathbb{Z}^{N}\}.$$
In order to extend the notion of positively
multiplicative graphs to the expansion $\Gamma_{\se}$ of $\Gamma$, we will in
fact need the larger algebra $\cA'_\se=\mathcal{A}_{\K}\underset{\mathrm{\K}}{\otimes}\K[q]$ where $q$ is a new
indeterminate distinct from $z_{1},\ldots,z_{N}$. The powers of~$q$ will
record the lengths of the paths starting from the vertex $v_{i_0}$ of $\Gamma$ choosen to construct the expansion. The set
$$\mB_{\se}'=\{q^{\ell
}z^{\beta}\mb_{i}\mid i=1,\ldots,n,\beta\in\mathbb{Z}^{N},\ell\in\mathbb{N}%
\}$$
is a $\K$-basis of $\cA'_e$. 
 Finally we define $\mathcal{A}_{\se}$ to be the subspace of $\mathcal{A}'
_{\se}$ with basis $\mB_{\se}=\{q^{\ell}z^{\beta}\mb_{i}\mid
\sv_{i,\be}^\ell\in V_{\se}\}$. 

\smallskip

Given $\sv_{i,\be}^\ell$ where $i\in \{1,\ldots,n\}$, $\be\in \Z^N$ and $\ell\in\N$, we set $\mb_{i,\be}^\ell = q^\ell z^\beta \mb_i\in\mB'_\se$. Note that $1=\mb_{1,{\bf 0}}^0$.

\begin{proposition}
\label{Prop_Gammae_PM} Let $\Ga_\se$ be the expansion of $\Ga$ at $v_1$. 
\begin{enumerate}
\item For any vertex $\sv_{j,\beta}^\ell\in V_{\se}$, we have in $\cA_{\se}$
\[
qA_\Ga\times \mb_{j,\be}^\ell=
\sum_{{\sf v}^{\ell+1}_{i,\de}\in V_{\ell+1}} \om_\se({\sf v}^\ell_{j,\be},{\sf v}^{\ell+1}_{i,\de})\mb_{i,\de}^{\ell+1}
\]
In particular, $qA_\Ga\in \mB_{\se}'$.
\item $\mB_{\se}$ is the subset of $\mB_{\se}'$
containing the elements $q^{\ell}z^{\beta}b_{i}$ with $\ell\geq0$ which appear with a
non-zero coefficient in the expansions of the powers $(qA_\Ga)^{\ell},\ell\geq0$
on the basis $\mB_{\se}'$.
\item The element $\mb_{1,{\bf 0}}^0=1$ belongs to $\mB_{\se}$ and the
product of two elements in the basis $\mB_{\se}$ expands on
$\mB_{\se}$ with nonnegative real coefficients. In particular
$\mathcal{A}_{\se}$ is a subalgebra of $\mathcal{A}_{\se}'$ with PM basis
$\mB_{\se}$.
\end{enumerate}
\end{proposition}

\begin{proof}
We prove (1). Let $\sv_{j,\be}^\ell\in V_\se$.  We have%
\begin{align*}
qA_\Ga\times \mb_{j,\be}^\ell&=q^{\ell+1}z^{\beta}A_\Ga\mb_{j}\\
&=q^{\ell+1}z^{\beta}\sum_{i=1}^{n}\om(v_j,v_i)\mb_{i}\\
&=q^{\ell+1}z^{\beta}\sum_{i=1}^{n}\sum_{\de\in \Z^N} \om(v_j,v_i)[\de]z^{\de}\mb_{i}\\
&=\sum_{i=1}^{n}\sum_{\de\in \Z^N} \om(v_j,v_i)[\de]q^{\ell+1}z^{\beta+\de}\mb_{i}\\
&=\sum_{i=1}^{n}\sum_{\de'\in \Z^N} \om(v_j,v_i)[\de'-\beta]\mb_{i,\de'}^{\ell+1}.
\end{align*}
But $\om(v_j,v_i)[\de'-\beta]\neq 0$ exactly when there is an edge from ${\sf v}^{\ell}_{j,\be}$ to ${\sf v}^{\ell+1}_{i,\de'}$ of weight $\om(v_j,v_i)[\de'-\beta]$. Hence the result. 

\smallskip

We prove (2). Since $\mb_{1,1}^0=1\in V_{\se}$, we get by induction using (1):
$$(qA_\Ga)^k = \underset{\text{$\pi$ starts at ${\sf v}^0_{1,1}$ and ends at ${\sf v}^k_{i,\be}$}}{\sum_{\pi, \ell(\pi) = k}}\wt(\pi) \mb_{i,\be}^k$$
We have  $\mb_{i,\be}^k\in \mB_{\se}$ (or equivalently $\sv_{i,\be}^k\in V_{\se}$) if and only if there exists a path from ${\sf v}^0_{1,1}$ to ${\sf v}^k_{i,\be}$ in~$\Ga_\se$. The result follows. 

\smallskip

We prove (3).  Given $\mb_{i,\be}^\ell$ and $\mb_{j,\ga}^s$ in the basis
$\mB_{\se}'$, we have
\[
\mb_{i,\be}^k\times \mb_{j,\ga}^s=q^{\ell}z^{\beta}\mb_{i}\times q^{s}z^{\gamma}\mb_{j}
=\sum_{k=1}^{n}c_{i,j}^{k}q^{\ell+s}z^{\beta+\gamma}\mb_{k}
=\sum_{k=1}^{n}\sum_{\delta\in\Z^N}c_{i,j}^{k}[\delta]\mb^{\ell+s}_{k,\delta+\beta+\gamma}
\]
which shows that $\mb_{i,\be}^\ell\times \mb_{j,\ga}^s$ expands positively on
$\mB_{\se}'$. Now if $\mb_{i,\be}^\ell$ and $\mb_{j,\ga}^s$ lie in $\mB_\se$, then $\mb_{i,\be}^\ell$ appears in $(qA_\Ga)^\ell$ and $\mb_{j,\ga}^s$ appears in $(qA_\Ga)^s$. Since $A_\Ga\in \func{Mat}_{n}(\R_+[\sZ^{\pm 1}])$, it follows that all the elements that appear in the product $\mb_{i,\be}^\ell\times \mb_{j,\ga}^s$ actually appears in $(qA_\Ga)^{\ell+s}$ and hence belong to $\mB_\se$. \end{proof}

\begin{remark}
\label{Rem_Simply}In fact, the interesting finite graphs $\Gamma$ have often
additional properties which simplify the definition and the study of their
expansion . This is the case when

\begin{enumerate}
\item The weights of $\Gamma$ are only nonnegative reals.
\item The length of a path is completely determined by its weight and by its end point in $\Gamma$.  In this
case, the indeterminate $q$ in the previous construction is redundant and can
be omitted. This is the case for the graph considered in the example below.
\end{enumerate}
\end{remark}

\begin{example}
The graph $\Gamma$%

\begin{center}
\begin{tikzpicture}[scale =1.3]
\tikzstyle{vertex}=[inner sep=2pt,minimum size=10pt,ellipse,draw]
\node[vertex] (a0) at (0,0) {$v_1$};
\node[vertex] (a1) at (0,-1) {$v_2$};

\draw[->] (a0) -- (a1);
\draw[->,bend left=55] (a1) to node[left]{$z_1$} (a0);
\draw[->,bend right=55] (a1) to node[right]{$z_2$} (a0);

\end{tikzpicture}
\end{center}
with adjacency matrix $A_\Ga=\left(
\begin{array}
[c]{cc}%
0 & z_{1}+z_{2}\\
1 & 0
\end{array}
\right)  $ is PM 
 with respect to the matrix realisation $(\cA,\mB)$ where
$\mathcal{A}=\mathbb{R}I_{2}\oplus\mathbb{R}A$ and $\mB=\{I_{2},A\}$. We draw the graph $\Gamma_{\se}$ of the expansion of $\Gamma$ at $v_1$ below on the left handside.  The indeterminate $q$ can be omitted, since when
$\sv_{i,\be}^\ell\in\Gamma_{\se}$ with $\be=(\be_1,\be_2)$ we must have $\ell=2\left(  \beta_{1}+\beta
_{2}\right)  +1$ if $v=v_2$ and $\ell=2(\beta_{1}+\beta_{2})$ if
$v=v_1$. We have 
$$\mathcal{A}_{\se}=\mathbb{R[}z_{1},z_{2}]I_{2}\oplus\mathbb{R[}z_{1},z_{2}]A.$$ 
Note that the graph $\Gamma_{\se}$ can also be labeled by $2$-bounded partitions (that is partition with parts less than or equal to 2) as illustrated on the right handside.

\begin{figure}[H]
\begin{minipage}{7.5cm}
\begin{center}
\begin{tikzpicture}[scale =1]
\def\rx{0.5}
\def\ry{0.866}
\tikzstyle{vertex}=[inner sep=2pt,minimum size=10pt]

\node[vertex] (a0) at (0,0) {$\sv^0_{1,(0,0)}$};
\node[vertex] (a1) at (0,-1) {$\sv^1_{2,(0,0)}$};
\node[vertex] (a21) at (-1,-2) {$\sv^2_{1,(1,0)}$};
\node[vertex] (a22) at (1,-2) {$\sv^2_{1,(0,1)}$};

\node[vertex] (a31) at (-1,-3) {$\sv^3_{2,(1,0)}$};
\node[vertex] (a32) at (1,-3) {$\sv^3_{2,(0,2)}$};

\node[vertex] (a41) at (-2,-4) {$\sv^4_{1,(2,0)}$};
\node[vertex] (a42) at (0,-4) {$\sv^4_{1,(1,1)}$};
\node[vertex] (a43) at (2,-4) {$\sv^4_{1,(0,2)}$};

\node[vertex] (a51) at (-2,-5) {$\sv^5_{2,(2,0)}$};
\node[vertex] (a52) at (0,-5) {$\sv^5_{2,(1,1)}$};
\node[vertex] (a53) at (2,-5) {$\sv^5_{2,(0,2)}$};

\node[vertex] (a61) at (-3,-6) {$\sv^6_{1,(3,0)}$};
\node[vertex] (a62) at (-1,-6) {$\sv^6_{1,(2,1)}$};
\node[vertex] (a63) at (1,-6) {$\sv^6_{1,(1,2)}$};
\node[vertex] (a64) at (3,-6) {$\sv^6_{1,(0,3)}$};

\draw[line width=.5pt,->] (a0) to   (a1);
\draw[line width=.5pt,->] (a1) to  (a21);
\draw[line width=.5pt,->] (a1) to  (a22);

\draw[line width=.5pt,->] (a21) to  (a31);
\draw[line width=.5pt,->] (a22) to  (a32);

\draw[line width=.5pt,->] (a31) to  (a42);
\draw[line width=.5pt,->] (a32) to  (a42);
\draw[line width=.5pt,->] (a31) to  (a41);
\draw[line width=.5pt,->] (a32) to  (a43);

\draw[line width=.5pt,->] (a41) to  (a51);
\draw[line width=.5pt,->] (a42) to  (a52);
\draw[line width=.5pt,->] (a43) to  (a53);

\draw[line width=.5pt,->] (a41) to  (a51);
\draw[line width=.5pt,->] (a42) to  (a52);
\draw[line width=.5pt,->] (a43) to  (a53);

\draw[line width=.5pt,->] (a51) to  (a61);
\draw[line width=.5pt,->] (a51) to  (a62);
\draw[line width=.5pt,->] (a52) to  (a62);
\draw[line width=.5pt,->] (a52) to  (a63);
\draw[line width=.5pt,->] (a53) to  (a63);
\draw[line width=.5pt,->] (a53) to  (a64);
\end{tikzpicture}
\end{center}
\end{minipage}
\begin{minipage}{7.5cm}
\begin{center}
\begin{tikzpicture}[scale =1]
\def\rx{0.5}
\def\ry{0.866}
\tikzstyle{vertex}=[inner sep=2pt,minimum size=10pt]

\node[vertex] (a0) at (0,0) {\scalebox{1}{$\varnothing$}};
\node[vertex] (a1) at (0,-1) {$\scalebox{.2}{\ydiagram{1}}$};
\node[vertex] (a21) at (-1,-2)  {$\scalebox{.2}{\ydiagram{1,1}}$};
\node[vertex] (a22) at (1,-2)  {$\scalebox{.2}{\ydiagram{2}}$};

\node[vertex] (a31) at (-1,-3) {$\scalebox{.2}{\ydiagram{1,1,1}}$};
\node[vertex] (a32) at (1,-3)  {$\scalebox{.2}{\ydiagram{2,1}}$};

\node[vertex] (a41) at (-2,-4){$\scalebox{.2}{\ydiagram{1,1,1,1}}$};
\node[vertex] (a42) at (0,-4) {$\scalebox{.2}{\ydiagram{2,1,1}}$};
\node[vertex] (a43) at (2,-4) {$\scalebox{.2}{\ydiagram{2,2}}$};

\node[vertex] (a51) at (-2,-5) {$\scalebox{.2}{\ydiagram{1,1,1,1,1}}$};
\node[vertex] (a52) at (0,-5) {$\scalebox{.2}{\ydiagram{2,1,1,1}}$};
\node[vertex] (a53) at (2,-5) {$\scalebox{.2}{\ydiagram{2,2,1}}$};

\node[vertex] (a61) at (-3,-6)  {$\scalebox{.2}{\ydiagram{1,1,1,1,1,1}}$};
\node[vertex] (a62) at (-1,-6)  {$\scalebox{.2}{\ydiagram{2,1,1,1,1}}$};
\node[vertex] (a63) at (1,-6)  {$\scalebox{.2}{\ydiagram{2,2,1,1,1}}$};
\node[vertex] (a64) at (3,-6)  {$\scalebox{.2}{\ydiagram{2,2,2}}$};

\draw[line width=.5pt,->] (a0) to   (a1);
\draw[line width=.5pt,->] (a1) to  (a21);
\draw[line width=.5pt,->] (a1) to  (a22);

\draw[line width=.5pt,->] (a21) to  (a31);
\draw[line width=.5pt,->] (a22) to  (a32);

\draw[line width=.5pt,->] (a31) to  (a42);
\draw[line width=.5pt,->] (a32) to  (a42);
\draw[line width=.5pt,->] (a31) to  (a41);
\draw[line width=.5pt,->] (a32) to  (a43);

\draw[line width=.5pt,->] (a41) to  (a51);
\draw[line width=.5pt,->] (a42) to  (a52);
\draw[line width=.5pt,->] (a43) to  (a53);

\draw[line width=.5pt,->] (a41) to  (a51);
\draw[line width=.5pt,->] (a42) to  (a52);
\draw[line width=.5pt,->] (a43) to  (a53);

\draw[line width=.5pt,->] (a51) to  (a61);
\draw[line width=.5pt,->] (a51) to  (a62);
\draw[line width=.5pt,->] (a52) to  (a62);
\draw[line width=.5pt,->] (a52) to  (a63);
\draw[line width=.5pt,->] (a53) to  (a63);
\draw[line width=.5pt,->] (a53) to  (a64);

\end{tikzpicture}
\end{center}
\end{minipage}
\caption{The graph $\Gamma_{\se}$ and its labeling by 2-bounded partitions}
\end{figure}

\end{example}

\subsection{Infinite PM graphs and harmonic functions}
\newcommand{\eGa}{\mathsf{\Ga}}
\newcommand{\eV}{\mathsf{V}}
\newcommand{\eE}{\mathsf{E}}
\label{SubsecHarmonic} In this section, $\eGa$ denotes 
an infinite rooted graded (oriented) graph with set of vertices $\eV$ where $\eV$ is countable, edge weight function~$\und{\om}$ that takes values in $\R_+$ and such that $v_1$ is the unique vertex at level $0$.

\begin{definition}
\label{infinite_mult}
The infinite graph $\eGa$ is  \emph{positively multiplicative} when there exists an 
algebra $\sf A$ over $\K$, PM with respect to a basis $\mB=\{\mb_{v}\mid v\in \eV\}$  where $\mb_{v_{1}}=1$ and a distinguished element $\gamma$ in $\sf A$ such that
\[
\gamma \mb_{v}=\sum_{v'\in \eV}\und{\om}(v,v')\mb_{v'}\text{ for any }v\in\eV.
\]
In particular, we have \footnote{Note that $\gamma$ is given by the fact that we must have 
$\gamma = \gamma \mb_{v_1} = \sum_{v'\in V}\und{\om}(v_1,v')\mb_{v'}$.}
\[
\gamma=\sum_{v\in \eV}\und{\om}(v_1,v)\mb_{v}.
\]
\end{definition}

\begin{definition}
A nonnegative (respectively positive) harmonic function on $\eGa$ is a
map $f:\eV\mathbb{\rightarrow}\mathbb{R}_{+}$ such that
$f(v_{1})=1$ and for any vertex $v\in\eV$%
\[
f(v)\geq0\text{ (resp. $f(v)>0$) \quad and\quad}f(v)=\sum_{v'\in \eV}\om_\se(v,v')f(v').
\]
\end{definition}
Observe that we have then%
\[
1=f(v_{1})=\sum_{v'\in \eV}\und{\om}(v_1,v')f(v').
\]

We denote by $\mathcal{H}(\eGa)$ the set of nonnegative
harmonic functions on $\eGa$. This is a convex cone and we write
$\mathcal{H}_{\partial}(\eGa)$ for its subset of extremal
points.

\begin{remark}
Positive harmonic functions on $\eGa$ are strongly connected to
Markov chains.\ Indeed, if $f$ is a positive harmonic function, we can define a Markov
$\mathcal{H}$ chain on $\eGa$ with transition matrix $\Pi$ defined by:
\[
\Pi(v,v')=\und{\om}(v',v)\frac{f(v')}{f(v)}\qu{for all $v,v'\in \eV$.}
\]
It then becomes possible to study $\mathcal{H}$ (for example to get its drift,
a law of large numbers etc.) when~$f$ is sufficiently simple, in particular
when $f$ is extremal (see \cite{Ker}, \cite{LLP2} and \cite{LT2} for examples).
\end{remark}

Recall the following theorem essentially due to Kerov and Vershik
(see \cite{Ker} and also \cite{LT}).

\begin{theorem}
\label{Th_KV}Assume that $\K(\sZ^{\pm 1})=\mathbb{R}$ and that $\eGa$ is
positively multiplicative with associated algebra ${\sf A}$ and basis
$\mB=\{b_{v}\mid v\in\eV\}$ with $1=\mb_{v_1}\in\mB$. Then the
map $f:\eV\rightarrow\mathbb{R}_{+}$ belongs to
$\mathcal{H}_{\partial}(\eGa)$ if and only 
the linear form
$\varphi:\mathbb{A}\rightarrow\mathbb{R}$ defined by $\varphi(\mb_{v})=f(v)\text{ for any }v\in\eV$
is a morphism of $\mathbb{R}$-algebras satisfying~$\varphi(\ga)=1$.
\end{theorem}

\begin{example}
Let $\eGa= (\eV,\eE,\und{\om})$ where $\eV=\{(a,b)\in \N^2\mid a=0 \text{ or } b\leq 1\}$, 
$$\eE = \{\big((a,b),(a',b')\big)\in \eV\times\eV\mid (a',b') = (a+1,b)\text{ or }(a',b') = (a,b+1)\}$$
and $\und{\om}$ is constant equal to $1$ on $\eE$. This graph is positively multiplicative with respect to the algebra 
$${\sf A} = \R[x_1,x_2]\slash \langle x_1x_2^2\rangle \qu{and the basis} \mB = \{\overline{x_1}^a\overline{x_2}^b\mid (a,b)\in \eV\}$$ where $\overline{x_1}$ and $\overline{x_2}$ denotes the images of $x_1$ and $x_2$ in ${\sf A}$. Let $f$ be an extremal harmonic function on $\eGa$. Then according the theorem above, we have $f(\ga)=f(1,0)+f(0,1)=1$ and  the map $\varphi$ from ${\sf A}$ to $\R_+$ defined by $\overline{x_1}^a\overline{x_2}^b\to f(a,b)$ is a morphism of algebras. In particular  $\overline{x_1x_2^2}$ is sent to $0$. But 
$$\varphi(\overline{x_1x_2^2})=\varphi(\overline{x_1})\varphi(\overline{x_2})^2=f(1,0)f(0,1)=0$$ 
hence $f(1,0)=0$ ou $f(0,1)=0$.  Finally we have
$$\mathcal{H}_{\partial}(\eGa) = \{f_1,f_2\}\text{ where $f_1(1,0) =1$, $f_1(0,1)=0$ and $f_2(1,0)=0$ and $f_2(0,1)=1$.}$$

\smallskip

Let us verify the theorem above "by hands". Let $f$ be an harmonic function on $\eGa$. We have $f(1,0)+f(0,1)=1$ and $f$ is positive so that $f(1,0)=p\in [0,1]$ and $f(0,1)=1-p\in[0,1]$. Next we set $f(2,0) =q\in[2p-1,q]$. Below, we draw the graph $\eGa$ and we put the expected values of $f$ in red close to each vertex. 
\begin{center}
\begin{tikzpicture}[scale =1]
\def\rx{0.65}
\def\ry{1}
\tikzstyle{vertex}=[inner sep=2pt,minimum size=10pt]

\node[vertex] (a0) at (0,0) {$(0,0)$};
\node at (-.6,0) {{\red $1$}};

\node[vertex] (a1) at (-\rx,-\ry) {$(1,0)$};
\node at (-\rx-.6,-\ry) {{\red $p$}};

\node[vertex] (a2) at (\rx,-\ry) {$(0,1)$};
\node at (\rx+1,-\ry) {{\red $1-p$}};

\node[vertex] (a3) at (-2*\rx,-2*\ry) {$(2,0)$};
\node at (-2*\rx-.6,-2*\ry) {{\red $q$}};

\node[vertex] (a4) at (0,-2*\ry) {$(1,1)$};
\node at (.4,-2*\ry-.4) {{\red $p-q$}};

\node[vertex] (a5) at (2*\rx,-2*\ry) {$(0,2)$};
\node at (2*\rx+1.35,-2*\ry) {{\red $1-2p+q$}};

\node[vertex] (a6) at (-3*\rx,-3*\ry) {$(3,0)$};
\node at (-3*\rx-1.1,-3*\ry) {{\red $2q-p$}};

\node[vertex] (a7) at (-\rx,-3*\ry) {$(2,1)$};
\node at (-\rx+.4,-3*\ry-.4) {{\red $p-q$}};

\node[vertex] (a8) at (3*\rx,-3*\ry) {$(0,3)$};
\node at (3*\rx+1.35,-3*\ry) {{\red $1-2p+q$}};

\node[vertex] (a9) at (-4*\rx,-4*\ry) {$(4,0)$};
\node at (-4*\rx-1.1,-4*\ry) {{\red $3q-p$}};

\node[vertex] (a10) at (-2*\rx,-4*\ry) {$(3,1)$};
\node at (-2*\rx+.4,-4*\ry-.4) {{\red $p-q$}};

\node[vertex] (a11) at (4*\rx,-4*\ry) {$(0,4)$};
\node at (4*\rx+1.35,-4*\ry) {{\red $1-2p+q$}};

\draw[line width=.5pt,->] (a0) to   (a1);
\draw[line width=.5pt,->] (a0) to   (a2);

\draw[line width=.5pt,->] (a1) to   (a3);
\draw[line width=.5pt,->] (a1) to   (a4);
\draw[line width=.5pt,->] (a2) to   (a5);
\draw[line width=.5pt,->] (a2) to   (a4);

\draw[line width=.5pt,->] (a3) to   (a6);
\draw[line width=.5pt,->] (a3) to   (a7);
\draw[line width=.5pt,->] (a4) to   (a7);

\draw[line width=.5pt,->] (a6) to   (a9);
\draw[line width=.5pt,->] (a6) to   (a10);
\draw[line width=.5pt,->] (a7) to   (a10);

\draw[line width=.5pt,->] (a5) to   (a8);
\draw[line width=.5pt,->] (a8) to   (a11);

\end{tikzpicture}
\end{center}

It is a straightforward exercise to show that $f$ satisfies 
$f(k,0)= (k-1)q-(k-2)p\geq 0$ for all $k\geq 2$. Using the fact that $f$ is positive, we get  $p\in [0,1]$ and $q\in \left[ \dfrac{k-2}{k-1}p,p\right]$
for all $k\geq 2$. It follows that  $q=p$ and $f(1,1) = 0$.  Finally, we get $f=pf_1+(1-p)f_2$ as expected.
\end{example}

\begin{example}
Let $\mathcal{Y}_{n}$ be the Young lattice of partitions with at most $n$
parts. Recall that its vertices are the Young diagrams $\lambda$ associated to
the partitions with at most $n$ parts and we have an edge $\lambda
\rightarrow\mu$ if the Young diagram of~$\mu$ is obtained by adding a box to the Young diagram of~$\lambda$. Let $\mathbb{A}=\mathrm{Sym}[x_{1},\ldots,x_{n}]$ be the
$\mathbb{R}$-algebra of symmetric functions in the indeterminates~$x_{1},\ldots,x_{n}$ and $\mB=\{s_{\lambda}\mid\lambda
\in\mathcal{Y}_{n}\}$ where $s_{\lambda}$ is the Schur function associated to
$\lambda$.\ Then~$\mathcal{Y}_n$ is positively multiplicative with respect to $\mathbb{A}$
and the basis~$\mB$. Its extremal positive harmonic functions
are the morphisms $\varphi:\mathbb{A}\rightarrow\mathbb{R}$ such that
$\varphi(s_{\lambda})\in\mathbb{R}_{+}^\ast$.\ One can show in this case that
these morphisms are parametrised by the vectors $p=(p_{1},\ldots,p_{n}%
)\in\mathbb{R}_{+}^{m}$ with $p_{1}+\cdots+p_{n}=1$, the morphism
$\varphi_{p}$ corresponding to $p$ being the specialisation $x_{i}%
=p_{i},i=1,\ldots,n$. The structure constants $c_{\lambda,\mu}^{\nu}$ are
the Littlewood-Richardson coefficients and it is a classical result that~$c_{\lambda
,\mu}^{\nu}>0$ only if $\nu$ can be reached from $\lambda$ by a path in
$\mathcal{Y}_{n}$ (i.e. $\lambda\subset\nu$). 
\end{example}

\subsection{Positive harmonic functions on expanded graphs}
\newcommand{\Gr}{\mathsf{Graph}}
\label{SubsecPMonExpansion} 

In this section, $\Gamma\in \mathsf{Graph}_n(\R_+[\sZ^{\pm1}])$ with set of vertices $\{v_1,\ldots,v_n\}$ is a strongly connected and 
positively multiplicative graph with respect to the matrix realisation $(\cA,\mB)$  where $\mB=(\mb_1,\ldots,\mb_n)$ and $\mb_1=1$. 
Let  $\Gamma_{\se}$ be the expansion of $\Ga$ at $v_1$ with set of vertices~$V_\se$.
We denote by $\om_\se$ the edge weight function on  $\Ga_\se$.
\begin{proposition}
\label{Gae_mult}
The infinite graph $\Gamma_{\se}$ is positively multiplicative with respect to the algebra $\mathcal{A}_{\se}$ and the basis~$\mB_{\se}$. 
\end{proposition}
\begin{proof}
First of all, the graph $\Gamma_{\se}$ is rooted and graded with $\sv^0_{1,0}$ the unique vertex at level $0$. We have $\mb^0_{1,0}= q^{0}z^0\mb_{1}=1$ and by Proposition \ref{Prop_Gammae_PM}.(1), we see that the element $\ga$ in Definition \ref{infinite_mult} is:
\[
qA_\Ga=qA_\Ga\times \mb^0_{1,0}= \sum_{{\sf v}^{1}_{i,\de}\in V_\se^{1}} \om_\se(\sv^0_{1,0},{\sf v}^{1}_{i,\de})\mb_{i,\de}^{1}.
\]
The result then follows from Proposition \ref{Prop_Gammae_PM}.
\end{proof}
Given $\bt= (t_1,\ldots,t_N)\in \R_+^\ast$ and $(\be_1,\ldots,\be_N)\in \Z^N$ we write $\bt^{\be} = t_1^{\be_1}\ldots t_{N}^{\be_N}$. With this notation, the specialisation that sends $z_k$ to $t_k$ for all $k\in \{1,\ldots,N\}$  sends $z^\be$ to $\bt^\be$. We write $A_{\bt}$ for the matrix obtained from $A_\Ga$ by applying this specialisation to all the coefficients of $A_\Ga$. 
\begin{theorem}
Assume that $\K[\sZ^{\pm1},q]\subset\mathcal{A}_{\se}$ and $\mathcal{A}=\K(\sZ^{\pm1})[A_\Ga]$ (i.e. $\Gamma$ has maximal dimension). Then, the set
$\mathcal{H}_{\partial}^{+}(\Gamma_{\se})$ of extremal positive harmonic
functions is parametrised by a subset of~${\R_{+}^\ast}^{N}$. More
precisely, to any $\bt =(t_{1},\ldots,t_{N})$ in~${\R_{+}^\ast}^{N}$
corresponds an extremal harmonic function $\varphi$ on $\Gamma_{e}$ such that
\[
\varphi(\sv_{i,\be}^\ell)=t^{\beta}\lambda^{-\ell}\pi_{i}\text{ for
}i=1,\ldots,n
\]
where $\pi=(\pi_{1},\ldots,\pi_n)$ is the left Perron-Frobenius vector of the matrix $A_\bt$ with eigenvalue
\[
\lambda=\sum_{i=1}^n \varphi(a_{i,1})\pi_{i}.
\]
Moreover, all the elements in $\mathcal{H}_{\partial}^{+}(\Gamma_{e})$ are
obtained in this way.
\end{theorem}

\begin{proof}
By Theorem \ref{Th_KV}, the elements of $\mathcal{H}_{\partial}^{+}%
(\Gamma_{\se})$ are determined by the morphisms of $\R$-algebras  $\varphi:\mathcal{A}%
_{e}\rightarrow\mathbb{R}$ which are positive on the basis $\mB_{e}$ and such
that $\varphi(\gamma)=1$ where $\gamma=qA_\Ga$.

\smallskip

Consider such a morphism $\varphi$. First of all, since $\K[\sZ^{\pm1},q]\subset\mathcal{A}_{\se}$, we have $\mb_i\in \cA_e$ for all $i=1,\ldots,N$. Since $(\cA,\mB)$ is a matrix realisation of $\Ga$ and $\mb_1=1$, we have $A_\Ga = \sum_{i=1}^na_{i,1}\mb_i$ so that $A_\Ga\in \cA_e$. 
We set $\varphi(q)=\lambda^{-1}\in\mathbb{R}_{>0}$, $\varphi(z_{k})=t_{k}\in\mathbb{R}_{>0}$ and $\varphi(\mb_i)=\pi_i\in \R$ for each $k=1,\ldots,N$. We have $\varphi(A_\Ga) = \varphi(q^{-1}) = \la$ since $\varphi(\ga)=1$. Note that $\la =\sum_{i=1}^n \varphi(a_{i,1})\pi_{i} $.

\smallskip

We show that $\pi_i\in \R_+^\ast$ for all $1\leq i\leq n$. Let $i\in \{1,\ldots,n\}$. Since $\Ga$ is strongly connected, there exists a path from $v_1$ to $v_i$ in $\Ga$. Thus,  there is a vertex of the form $\sv_{i,\be}^\ell$ in $\Ga_\se$ for some $\be\in \Z^N$ and $\ell\in \N$. We have $\varphi(\mb_{i,\be}^\ell) = \varphi(q^\ell z^\be\mb_{i})=\la^{-\ell}\bt^\be\pi_i$ so that $\pi_i = \dfrac{\varphi(\mb_{i,\be}^\ell)}{\la^{-\ell}\bt^\be}\in \R_+^\ast$ since $\la,\bt^\be,\varphi(\mb_{i,\be}^\ell)$ all lie in $\R_+^\ast$ ($\varphi$ is positive on $\mB_\se$). We note here  that if $\sv_{i,\be'}^{\ell'}\in V_\se$ then 
$$\varphi(q^{\ell+\ell'}z^{\be+\be'}\mb_i) =\varphi(\mb_{i,\be}^\ell)\lambda^{-\ell'}t^{\be'}=\varphi(\mb_{i,\be'}^{\ell'})\lambda^{-\ell}t^{\be}\qu{and }\frac{\varphi(\mb_{i,\be}^\ell)}{\lambda^{-\ell}t^\be}=\frac{\varphi(\mb_{i,\be'}^{\ell'})}{\lambda
^{-\ell'}t^{\be'}}=\pi_i.$$
The coefficient of $A_\bt$ (the matrix $A$ in which each $z_i$ is specialised at $t_i$) in position $(i,j)$ is $\varphi(a_{i,j})$ (recall that $a_{i,j}\in \K[\sZ^{\pm 1}]$). 
Let $1\leq j\leq n$. 
We have
\begin{align*}
qA_\Ga\mb_{j}=q\sum_{i=1}^{n}a_{i,j}\mb_{i}.
\end{align*}
It follows that $\varphi(qA_\Ga) \pi_j= \varphi(q)\sum_{i=1}^{n}\varphi(a_{i,j})\pi_i$ and 
$\sum_{i=1}^{n}\varphi(a_{i,j})\pi_i = \la\pi_j$. Hence showing that $\la$ is an left eigenvalue of $A_\bt$ associated to the left eigenvector $(\pi_1,\ldots,\pi_n)$. The matrix $A_{\boldsymbol{t}}$ is still
irreducible (because the $t_{i}$'s are strictly positive), therefore $\pi$ is equal to the
unique left Perron-Frobenius vector of $A_{\boldsymbol{t}}$ and $\varphi
(A_\Ga)=\lambda$ is the associated Perron-Frobenius eigenvalue. The corresponding
positive extremal function~$f$ is defined on $\Gamma_{e}$ by $f(\sv_{i,\be}^\ell)=\varphi(\mb_{i,\be}^\ell)=\lambda^{-\ell}t^{\beta
}\pi_{i}$.

\medskip

Conversely, for any $\boldsymbol{t}=(t_{1},\ldots,t_{N})\in\mathbb{R}_{>0}%
^{N}$, let $(\pi_{1}=1,\pi_{2},\ldots,\pi
_{n})$ be the normalised Perron-Frobenius vector of $A_\bt$. We have a morphism
$\varphi:\K[\sZ^{\pm1}]\rightarrow\mathbb{R}$ defined by $\varphi
(z_{k})=t_{k}$ for any $k=1,\ldots,N$. By Lemma \ref{Lem_IntegralDomain}, we know that $\cA=\K[\sZ^{\pm1}][A_\Ga]$ is integral over $\K[\sZ^{\pm1}]$, it follows by \cite[Sec. V.2.1, Corollary 4]{BBK}) that we can extend $\varphi$ to a morphism (that we will also
denote $\varphi$) from $\mathcal{A}=\K[\sZ^{\pm1}][A_\Ga]$ to $\R$
by setting $\varphi(A_\Ga)=\lambda= \sum_{i=1}^n \varphi(a_{i,1})\pi_{i}$. 
Therefore since $\mathcal{A}_{e}$ is a subalgebra of $\K[\sZ^{\pm1},q][A_\Ga]$, we get by restriction a morphism $\varphi:\mathcal{A}%
_{e}\rightarrow\mathbb{R}$. Applying $\varphi$ to the relation 
$$qA_\Ga\mb_{j}=q\sum_{i=1}^{n}a_{i,j}\mb_{i}$$
gives $\la \varphi(\mb_{j}) = \sum \varphi(a_{i,j})\varphi(\mb_i)$. It follows that $(\varphi
(\mb_{1}),\ldots,\varphi(\mb_{n}))$ is the left eigenvector of
$A_{\boldsymbol{t}}$ associated to~$\lambda$. Further $\mb_1=1$ so $\varphi
(\mb_{1})=1$. Since the normalised Perron-Frobenius vector is unique, we must have
$\varphi(\mb_{i})=\pi_{i}$ for all $i=1,\ldots,n$. This proves that we can
indeed define from any $\boldsymbol{t}=(t_{1},\ldots,t_{m})\in{\R_{+}^\ast}^{N}$ a morphism $\varphi:\mathcal{A}_{e}\rightarrow\mathbb{R}$ positive
on the basis $\mB_{\se}$ and such that $\varphi(\gamma)=1.$
\end{proof}
%

\begin{remark}
The arguments used in the proof of the previous theorem show that the
morphisms obtained (and thus also the extremal harmonic functions) are essentially
determined by their restrictions to $\K[\sZ^{\pm1}]$ thanks to the
Perron-Frobenius theorem. Nevertheless, in the previous construction, it can
happen that two elements in ${\R_{+}^\ast}^{N}$ give the same positive
harmonic function so that we do not get a complete parametrisation of the extremal harmonic functions.
\end{remark}

\section{The group of maximal indices}

\label{Sec_Group}We shall assume in this section that $\sZ=\emptyset$ and
$\K(\sZ^{\pm1})=\mathbb{C}$ (thus we only consider $\C$-algebras). All the graphs in this section belong to $\mathsf{Graph}_{n}(\R_+)$.
We have seen that in general, the property of a
finite graph to be positively multiplicative depends on a choice of the vertex considered as a root. Let $\Ga\in \sG{n}{\R_+}$ with vertices $\{v_1,\ldots,v_n\}$ be a positively multiplicative graph at $v_{1}$ with respect to the matrix realisation $(\cA,\mB)$ where $\mB=(\mb_1,\ldots,\mb_n)$. We further assume that $\cA$ is \emph{commutative}. Consider the cone
\[
\mathcal{C}(\mB)=\bigoplus_{j=1}^{n} \mathbb{R}_{+}\mb_{j}
\]
in the algebra $\mathcal{A}$. In this section, we will consider the set of
indices $i$ such that
\[
\mathcal{C}(\mB)(e_{i})=\mathbb{R}_{+}^n\qu{where $(e_1,\ldots,e_n)$ is the canonical basis of $\C^n$.} 
\]
We will prove in particular that it admits the structure of a commutative group. We
start with PM algebras coming from PM graphs of maximal dimension and next consider the general
situation of possibly infinite dimensional PM algebras.

\subsection{generalised permutations and PM graphs of maximal dimension}

Let $\Ga\in \sG{n}{\R_+}$ with vertices $\{v_1,\ldots,v_n\}$ be a PM graph at $v_1$ with respect ot the matrix realisation $(\cA,\mB)$ where $\mB=(\mb_1,\ldots,\mb_n)$. We will assume in this
section that $\Gamma$ is of maximal dimension so that
$\cA=\C[A_\Ga]$. Also, we can apply Theorem
\ref{ThGG2} to $\Gamma$ to see that the vector $e_{1}$ is cyclic and the map
$$
\begin{array}{cccccc}
\varphi&:&\C[A_\Ga]&\to&\oplus_{i=1}^{n}\mathbb{C}e_{i}\\
&&P(A_\Ga)&\mapsto&P(A_\Ga)\cdot e_{1}
\end{array}$$
 is an isomorphism of vector spaces. Moreover $\mb_{i}$ is the unique element in $\mathcal{A}$
such that $\mb_{i}(e_{1})=e_{i}$ for all $i$.

\begin{definition}
\label{def_maximal}
We say that a $i_{0}\in\{1,\ldots,n\}$ is \emph{maximal} 
if for
any $\mu=(\mu_1,\ldots,\mu_n)\in\mathbb{R}_{+}^{n}$ with $\mu_{1}+\cdots+\mu_{n}=1$ there
exists $\nu=(\nu_1,\ldots,\nu_n)\in\mathbb{R}_{+}^{n}$ such that%
\begin{equation*}
\sum_{i=1}^{n}\nu_{i}\mb_{i}(e_{i_{0}})=\mu\text{.} \label{Maximal}%
\end{equation*}
\end{definition}
Equivalently, $i_0$ is maximal if $\mathcal{C}(\mB)(e_{i_{0}})=\mathbb{R}_{+}^{n}$ but the definition above will be easier to extend to the case
of infinite dimensional algebras. It is easy to show that $1$ is maximal. Indeed, since $\mb_{i}(e_{1})=e_{i}$ for all $i=1,\ldots,n$, we have
\[
\sum_{i=1}^{n}\mu_{i}\mb_{i}(e_{1})=\mu\qu{ for all $\mu=(\mu_1,\ldots,\mu_n)\in \R^n$.}
\]

Recall from Section \ref{Subsec_Relabelling} that $\mathcal{G}_n$ is the group of generalised permutations of size $n$ and that we have  defined the subgroup
\[
G_{\Gamma}=\{U\in\mathcal{G}_{n}\mid UA_\Ga=A_\Ga U\}
\] 
of $\mathcal{G}_{n}$ which is the group of generalised automorphisms of the
graph $\Gamma$. 
The group $G_{\Gamma}$ is contained
in the centralizer of $A_\Ga$ which is equal to $\C[A_\Ga]$ (since $\Ga$ is of maximal dimension) and we have $G_{\Gamma}=\mathcal{G}_{n}\cap\mathcal{A}$. Consider $U$ in $G_{\Gamma}$ and set $U(e_{i})=\lambda_{i}e_{\sigma(i)}$ with $\lambda_{i}\in\R_{+}^\ast$  for $i=1,\ldots,n$ and $\sigma\in\mathfrak{S}_{n}$. We get
\[
\mb_{i}U(e_{1})=U\mb_{i}(e_{1})=U(e_{i})=\lambda_{i}e_{\sigma(i)}.
\]
Then
\[
\sum_{i=1}^{n}\frac{\mu_{\sigma(i)}}{\lambda_{i}}\lambda_{1}\mb_{i}%
(e_{\sigma(1)})=\sum_{i=1}^{n}\frac{\mu_{\sigma(i)}}{\lambda_{i}}%
\mb_{i}U(e_{1})=\mu\text{.}%
\]
This shows that $\sigma(1)$ is maximal and $\mb_{\sigma(1)}=\frac{1}%
{\lambda_{1}}U\in G_{\Gamma}$ because $U$ belongs to $\mathcal{A}$ and
$\frac{1}{\lambda_{1}}U(e_{1})=e_{\sigma(1)}$. Let us denote by $I_{m}\subset \{1,\ldots,n\}$ the
set of maximal indices for $\Gamma$. The results that we obtain in this section (working with countable infinite dimensional PM algebras) will imply the following statements:
\begin{itemize}
\item[{\tiny $\bullet$}] for all $i_0\in I_m$, $e_{i_0}$ is a cyclic vector for $A_\Ga$. Thus there exists a matrix realisation $(\cA_\Ga,\mB^{(i_0)})$ of $\Ga$  where $\mB^{(i_{0})}=\{\mb_{1}^{(i_{0})},\ldots,\mb_{n}^{(i_{0})}\}$ and $\mb^{(i_0)}_{i_0}=1$,
\item[{\tiny $\bullet$}]  for all $i_0\in I_m$, the basis $\mB^{(i_{0})}$ is
contained in the group $G_{\Gamma}$,

\item[{\tiny $\bullet$}]  the group $G_{\Gamma}$ acts transitively on $I_{m}$,

\item[{\tiny $\bullet$}]  the set $I_{m}$ itself has the structure of an abelian group.
\end{itemize}
%

\subsection{generalised adjacency algebra and positivity}

Consider a positively multiplicative commutative algebra $\mathcal{A}$ with respect to the countable basis $\mB=\{\mb_{i}\mid i\in
I\}$. Let $A_{i} = (c_{i,j}^k)_{j,k\in I}$ be the (possibly infinite) matrix of the multiplication $\mb_{i}$ expressed in the basis $\mB$ (that is $\mb_i\mb_j=\sum_k c_{i,j}^k\mb_k$). Then $A_i$ can be regarded as the
adjacency matrix of an oriented weighted graph with set of vertices $\{x_i\mid i\in I\}$ and edge weight function $\om$ defined by $\om(x_j,x_k)=c_{i,j}^k$ for all $j,k$ in $I$ (recall our convention that $\om(x,x')=0$ when there is no edge from $x$ to $x'$). When $n=\mathrm{card}(I)$ is
finite, the family of adjacency matrices $\{A_{i},i\in I\}$ generates a subalgebra of
$M_{n}(\mathbb{C})$ isomorphic to $\mathcal{A}$.

We can formalise this phenomenon by the notion of generalised adjacency
algebra. Throughout this section, $I$ is a countable set and $V$ will denote
the normed vector space $\ell^{1}(I)$ of complex sequences $(v_{i})$ indexed
by $I$ with the $\ell^{1}$-norm $\Vert v\Vert_{1}=\sum_{i\in I}|v_{i}|$. The
vector space $V$ is complete, and we identify $I$ with the Schauder
\footnote{A Schauder basis $S$ of a Banach space $V$ is a subset of $V$ such
that any element $v\in V$ as a unique writing as a convergent sum $\sum_{s\in
S}a_{s}s$ with $a_{s}\in\mathbb{C}$, $s\in S$. This is the natural extension
of the definition of the basis of a vector space in the Banach space setting.}
basis $(e_{i})_{i\in I}$ of $V$ given by $e_{i}(j)=\delta_{ij}$ for $i,j\in
I$. We write $V_{+}$ for the cone of $V$ consisting of complex sequences
taking non-negative real values, and $V_{+,1}$ the subset of $V_{+}$ having
$\ell^{1}$-norm equal to one. From a measure theory point of view, $V$ denotes
the set of finite complex measures on the set $I$, while $V_{+}$ (resp.
$V_{+,1}$) denotes the set of positive (resp. probability) measures on $V$.

We denote by $B(V)$ the set of endomorphisms of $V$ which are bounded with
respect to the $\ell^{1}$-norm, namely the set of linear maps $T:V\rightarrow
V$ such that
\[
\Vert T\Vert_{1,1}=\sup_{v\in V,\Vert v\Vert_{1}=1}\Vert Tv\Vert_{1}<+\infty.
\]
The map $T\mapsto\Vert T\Vert_{1,1}$ defines a norm on $B(V)$. If $T\in B(V)$,
we write $T_{ij}$ for the coefficient of $Te_{j}$ along $e_{i}$ for $i,j\in
I$. Given $T_{1}$ and $T_{2}$ in $B(V)$, we have%
\[
\left\Vert T_{1}T_{2}\right\Vert _{1,1}\leq\left\Vert T_{1}\right\Vert
_{1,1}\left\Vert T_{2}\right\Vert _{1,1}%
\]
so that $T_{1}T_{2}$ also belongs to $B(V)$. This shows that $B(V)$ is stable
by composition of operators.

\begin{notation}
Given any normed space $E$ and any subset $S\subset E$, write%
\[
\mathcal{C}(S)=\overline{\{\sum_{s\in S}\lambda_{s}s|\lambda_{s}\geq0,\,s\in
S\}}\text{ and }\mathcal{C}_{1}(S)=C(S)\cap B(0,1)
\]
where the closure is taken with respect to the norm on $E$ and $B(0,1)$ is the
unit ball.
\end{notation}

\begin{definition}
\ 

\begin{itemize}
\item An \emph{adjacency operator} on $I$ is an operator $T\in B(V)$ such
that
\[
T_{ij}\geq0
\]
for all $i,j\in I$.

\item An \emph{adjacency algebra} on $I$ is a closed commutative
unital subalgebra of $B(V)$ generated by adjacency operators on $I$.
\end{itemize}
\end{definition}

For an adjacency algebra $\mathcal{A}$ (regarded as a normed space as
previously), define the adjacency cone of $\mathcal{A}$ as the cone
\[
\mathcal{A}_+=\mathcal{C}(\{T\in\mathcal{A},T\text{ adjacency
operator on }I\}).
\]
By definition, $\mathcal{A}$ is then generated by $\mathcal{A}_+$.
Since $\mathcal{A}_+\cdot\mathcal{A}_+\subset\mathcal{A}_+$, this implies that
\begin{equation}
\mathcal{A}=\mathrm{span}(\mathcal{A}_+). \label{eq:span_A_C}%
\end{equation}

\begin{example}
Let $I=\{1,\ldots,n\}$ and let $V$ be $n$-dimensional $\C$-vector space with basis $\{e_{1},\ldots,e_{n}\}$. Let $A_\Ga$ is the adjacency matrix of an oriented graph $\Ga\in \sG{n}{\R_+}$. Then $\C[A_\Ga]$ is an adjacency algebra on~$I$. We always have
$\mathcal{C}(A^{k},k\geq0)\subset\C[A]_+$, but in general the inclusion
is strict. Consider for example
\[
A=%
\begin{pmatrix}
1 & 0\\
0 & 2
\end{pmatrix}
\text{ where }A-A^{2}/2=%
\begin{pmatrix}
1/2 & 0\\
0 & 0
\end{pmatrix}
\in\C[A]_+\setminus \mathcal C(A^{k},k\geq0).
\]

\end{example}

\begin{definition}
Two adjacency algebras $\mathcal{A},\mathcal{A}'$ on $I$ are
isomorphic if there exists an invertible adjacency operator $U:V\rightarrow V$
such that $U^{-1}$ is again an adjacency operator and such that $\mathcal{A}%
'=\{UAU^{-1}\mid A\in\mathcal{A}\}$.
\end{definition}

It is then straightforward to prove that for two isomorphic adjacency algebras
$\mathcal{A}$ and $\mathcal{A}'$ we have $\mathcal{A}
'_+=U\mathcal{A}_+U^{-1}$. As the following lemma shows,
isomorphisms between adjacency algebras are infinite-dimensional analogues of
the generalised permutations of the set $I$ (i.e. permutations and diagonal
rescaling of the basis $(e_{i})_{i\in I}$).

\begin{lemma}
\label{positive_inverse_implies_permutation} If $T$ is an invertible adjacency
operator such that $T^{-1}$ is also an adjacency operator, then $T$ is a
permutation up to multiplication by a diagonal operator with positive coefficients, meaning that there exists a
permutation $\sigma$ of $I$ and a family $\{\lambda_{i}\}_{i\in I}$ of
positive reals such that
\[
Te_{i}=\lambda_{i}e_{\sigma(i)}.
\]

\end{lemma}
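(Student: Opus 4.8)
The plan is to read off the matrix identities hidden in the operator equations $TT^{-1}=T^{-1}T=\mathrm{Id}$ and then exploit that all entries of $T$ and of $S:=T^{-1}$ are nonnegative. First I would record that, since $T,S\in B(V)$ are bounded on $\ell^{1}(I)$, the matrix of the composite is the product of the matrices, i.e. for all $i,j\in I$
\[
\sum_{k\in I}T_{ik}S_{kj}=\delta_{ij},
\]
the sum converging absolutely. The one non-routine point is precisely this justification that ``the matrix of a product is the product of matrices'' in the infinite, unbounded-index setting; it follows from the continuity of $T$ together with the continuity of each coordinate functional $v\mapsto v_{i}$ on $\ell^{1}(I)$ (since $|v_i|\le\Vert v\Vert_1$), and it is where a little care with the Banach-space hypotheses is genuinely needed. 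I expect this bookkeeping to be the main obstacle; everything after it is a short positivity-plus-invertibility argument.

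The key observation is then purely combinatorial. Fix $i\neq j$: the displayed sum is zero and all its terms $T_{ik}S_{kj}$ are nonnegative, so $T_{ik}S_{kj}=0$ for every $k$. I would use this to prove that each column of $T$ has exactly one nonzero entry. Suppose column $k$ had two positive entries $T_{ik}>0$ and $T_{i'k}>0$ with $i\neq i'$. Applying the vanishing with the pair $(i,j)$ for every $j\neq i$ forces $S_{kj}=0$ for all $j\neq i$, while the pair $(i',j)$ for every $j\neq i'$ forces $S_{kj}=0$ for all $j\neq i'$; since $i\neq i'$, together these give $S_{kj}=0$ for all $j$, i.e. the $k$-th row of $S=T^{-1}$ vanishes. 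That contradicts the surjectivity of $T^{-1}$, because a zero row would confine the image to the proper closed subspace $\{v:v_{k}=0\}$. As $T$ is injective no column can be entirely zero, so each column has a unique nonzero entry; writing $\sigma(k)$ for its row and $\lambda_{k}:=T_{\sigma(k),k}>0$ yields exactly $Te_{k}=\lambda_{k}e_{\sigma(k)}$.

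It remains to check that $\sigma\colon I\to I$ is a bijection. For injectivity I would note that $\sigma(k)=\sigma(k')$ with $k\neq k'$ would make $Te_{k}$ and $Te_{k'}$ proportional, contradicting the injectivity of $T$ on the independent vectors $e_{k},e_{k'}$. For surjectivity, if some $i$ were missing from the image of $\sigma$, then $T_{ik}=0$ for every $k$, so the $i$-th row of $T$ would vanish, again confining $\mathrm{im}(T)$ to a proper subspace and contradicting the surjectivity of $T$ exactly as above. Hence $\sigma$ is a permutation of $I$ and the $\lambda_{i}$ are positive reals, which is the claimed form $Te_{i}=\lambda_{i}e_{\sigma(i)}$. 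The whole argument is elementary once the entrywise product identity is in place, the only delicate part being the justification of working coordinatewise with these infinite matrices.
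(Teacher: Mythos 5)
Your proof is correct and takes essentially the same route as the paper: both arguments suppose some column of $T$ has two nonzero entries and use the nonnegativity of the entries of $T^{-1}$ against the identity $TT^{-1}=T^{-1}T=\mathrm{Id}$ to force a contradiction with invertibility (the paper concludes that $T^{-1}$ sends two independent basis vectors into the ray $\mathbb{R}_{\geq 0}e_{i_0}$, contradicting injectivity, while you conclude that a row of $T^{-1}$ vanishes, contradicting surjectivity --- the same no-cancellation-under-positivity mechanism in dual form). Your extra care with the entrywise product formula on $\ell^{1}(I)$ and the explicit check that $\sigma$ is a bijection correctly fill in steps the paper leaves implicit.
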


\begin{proof}
Suppose by contradiction the existence of $i_{0}\in I$ and $i_{1},i_{2}\in I$
with $i_{1}\not =i_{2}$ such that
\[
Te_{i_{0}}=\lambda e_{i_{1}}+\mu e_{i_{2}}+\sum_{i\in I\setminus\{i_{1}%
,i_{2}\}}l_{i}e_{i}%
\]
with $\lambda,\,\mu>0$ and $l_{i}\geq0$ for all $i\in I\setminus\{i_{1}%
,i_{2}\}$. Then, since $T^{-1}Te_{i_{0}}=e_{i_{0}}$ and $T^{-1}$ has
non-negative entries, the latter equality implies that both $T^{-1}e_{i_{1}}$
and $T^{-1}e_{i_{2}}$ belong to $\mathbb{R}_{\geq0}e_{i_{0}}$. But this
contradicts the fact that $T^{-1}$ is invertible.
\end{proof}

For $i\in I$ and an adjacency algebra $\mathcal{A}$, denote by $\mathrm{ev}%
_{i}:\mathcal{A}\rightarrow V$ the evaluation map $A\mapsto Ae_{i}$. In the
following definition, recall that $\mathcal{A}$ is a Banach space as a closed
subspace of $B(V)$ and that a linear map $f:E\rightarrow F$ between Banach
spaces is \emph{coercive} if $\inf_{x\in E,\Vert x\Vert_{E}=1}\Vert
f(x)\Vert_{F}>0$. Remark in particular that coercivity implies injectivity for
a linear map. Both notions coincide when $I$ is finite because $\mathcal{A}$ is
then finite-dimensional and its unit ball is compact.

\begin{definition}
Let $\mathcal{A}$ be an adjacency algebra $\mathcal{A}$ on $I$ and consider an
element $i\in I$.

\begin{itemize}
\item The element $i$ is \emph{nondegenerate} if $\mathrm{ev}_{i}$ is coercive
(and thus injective) as a linear map from $\mathcal{A}$ to $V$.

\item The element $i$ is \emph{maximal} if $\mathrm{ev}_{i}$ is surjective
from $\mathcal{A}_+$ to $V_{+}$.
\end{itemize}
\end{definition}

In the latter definition, coercivity can be replaced by injectivity when $I$
is finite. Observe also that when $i$ is maximal,  by
\eqref{eq:span_A_C} the vector $e_i$ is cyclic for the algebra $\mathcal{A}$, that is we have $\mathcal{A}e_i=V$. 
\\We first state a result proving that we mostly have to establish
nondegeneracy at one maximal element to get nondegeneracy at any maximal
element. 

\begin{lemma}
\label{Lemma_propagation}Suppose that $i_{0}\in I$ is nondegenerate and
maximal. Then, any $i\in I$ which is maximal is also nondegenerate.
If $I$ is finite and $i_{0}\in I$ is maximal for $\mathcal{A}$, then $i_{0}$
is also nondegenerate and $\dim\mathcal{A}=|I|$.
\end{lemma}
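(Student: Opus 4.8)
The lemma has two parts. The first asserts that if some $i_0$ is both maximal and non-degenerate, then every maximal $i$ is automatically non-degenerate. The second concerns the finite case: maximality alone forces non-degeneracy (and full dimension $\dim\mathcal{A}=|I|$). I would treat the two parts separately, the finite case being the cleaner one to dispatch first.

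\textbf{The finite case.} Suppose $I$ is finite and $i_0$ is maximal, i.e. $\mathrm{ev}_{i_0}:\mathcal{C}(\mathcal{A})\to V_+$ is surjective. Since every element of $\mathcal{A}$ is a difference of two elements of the cone $\mathcal{C}(\mathcal{A})$ (by \eqref{eq:span_A_C}, which says $\mathcal{A}=\mathrm{span}(\mathcal{C})$), surjectivity onto $V_+$ upgrades to surjectivity of $\mathrm{ev}_{i_0}:\mathcal{A}\to V$ onto all of $V=\mathbb{C}^{|I|}$. A surjective linear map out of a finite-dimensional space satisfies $\dim\mathcal{A}\ge\dim V=|I|$. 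On the other hand $\mathcal{A}\subset B(V)$ is commutative and generated by operators commuting with each other, and $e_{i_0}$ is a cyclic vector (the excerpt remarks that maximality implies cyclicity via \eqref{eq:span_A_C}); for a cyclic vector the evaluation map is injective, so $\dim\mathcal{A}\le|I|$. Combining gives $\dim\mathcal{A}=|I|$ and that $\mathrm{ev}_{i_0}$ is a linear isomorphism, hence injective; in finite dimension injective equals coercive (the unit ball of $\mathcal{A}$ is compact, so the infimum defining coercivity is attained and positive), so $i_0$ is non-degenerate.

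\textbf{The propagation part.} Now assume $i_0$ is maximal \emph{and} non-degenerate, and let $i$ be any maximal element; I want non-degeneracy of $i$. The natural strategy is to transport the good behaviour at $i_0$ over to $i$ by an isomorphism of adjacency algebras and then invoke Lemma~\ref{positive_inverse_implies_permutation}. Concretely: since $i$ is maximal, $\mathrm{ev}_i$ is onto $V_+$, so there is an adjacency operator (an element of $\mathcal{C}(\mathcal{A})$) taking $e_i$ to $e_{i_0}$, and by maximality of $i_0$ one taking $e_{i_0}$ to $e_i$; composing, one builds an operator $U\in\mathcal{A}$ with $Ue_i=e_{i_0}$ whose inverse is again an adjacency operator. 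By Lemma~\ref{positive_inverse_implies_permutation}, $U$ is a generalized permutation, so conjugation $A\mapsto UAU^{-1}$ is an isomorphism of adjacency algebras carrying $\mathrm{ev}_i$ to $\mathrm{ev}_{i_0}$ up to the invertible isometric rescaling $U$. Since coercivity is preserved under pre/post-composition with bounded operators having bounded inverses, non-degeneracy of $i_0$ transfers back to $i$.

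\textbf{The main obstacle.} The delicate point is the propagation step in the \emph{infinite} ($\ell^1$) setting, where coercivity is strictly stronger than injectivity and one cannot invoke compactness. The argument must genuinely produce a two-sided adjacency operator relating $e_i$ and $e_{i_0}$ and then quote Lemma~\ref{positive_inverse_implies_permutation} to know it is a genuine generalized permutation; the quantitative coercivity constant must survive the transport, which requires that the relating operator $U$ and its inverse are \emph{bounded} in the $\|\cdot\|_{1,1}$ norm. Verifying that the operators extracted from the cones $\mathcal{C}(\mathcal{A})$ actually lie in $B(V)$ with bounded inverse, rather than merely being formal matrices, is where the care is needed; the finite-dimensional half, by contrast, is essentially a dimension count once cyclicity is in hand.
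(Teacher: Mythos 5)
Your proposal is correct and takes essentially the same route as the paper: the finite case is the identical dimension count (maximality gives surjectivity of $\mathrm{ev}_{i_0}$, commutativity plus cyclicity gives injectivity, and compactness upgrades injectivity to coercivity), and the propagation step is the paper's construction of mutually inverse elements $U,U'\in\mathcal{C}(\mathcal{A})$ from the two maximalities together with non-degeneracy of $i_0$, followed by a transfer of the coercivity constant through these bounded operators. The only (harmless) difference is that your detour through Lemma~\ref{positive_inverse_implies_permutation} and conjugation is unnecessary --- and $U$ is not isometric in general --- since the paper obtains the estimate $\Vert\mathrm{ev}_i(T)\Vert_1=\Vert\mathrm{ev}_{i_0}(TU')\Vert_1\geq\alpha\Vert U\Vert_{1,1}^{-1}\Vert T\Vert_{1,1}$ directly from $UU'=\mathrm{Id}$ and the fact that $U,U'\in\mathcal{C}(\mathcal{A})\subset B(V)$ are bounded by definition.
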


\begin{proof}
Suppose that $i_{0}\in I$ is maximal and nondegenerate for $\mathcal{A}$, and
set
\[
\alpha=\inf_{T\in\mathcal{A},\Vert T\Vert_{1,1}=1}\Vert\mathrm{ev}_{i_{0}%
}(T)\Vert_{1}.
\]
We have $\alpha>0$ by coercivity of $\mathrm{ev}_{i_{0}}$. Let $i\in I$ be
another element which is also maximal. Let $T\in\mathcal{A}_+$ be
such that $\Vert T\Vert_{1,1}=1$. Since $e_{i}$ and $e_{i_{0}}$ are maximal,
there exist $U\in\mathcal{A}_+$ such that $Ue_{i}=e_{i_{0}}$ and
$U'\in\mathcal{A}_+$ such that $U'e_{i_{0}}=e_{i}$.
Hence, $UU'e_{i_{0}}=e_{i_{0}}$, and by nondegeneracy of $e_{i_{0}}$
we have $UU'=Id=U'U$ (recall that $\mathcal{A}$ is
commutative). In particular, $\Vert TU'\Vert_{1,1}\geq\Vert
U\Vert_{1,1}^{-1}\Vert T\Vert_{1,1}$. Hence,
\[
\Vert\mathrm{ev}_{i}(T)\Vert_{1}=\Vert\mathrm{ev}_{i_{0}}(TU')
\Vert_{1}\geq\alpha\Vert TU'\Vert_{1,1}\geq\alpha\Vert U\Vert
_{1,1}^{-1}\Vert T\Vert_{1,1},
\]
so that $\inf_{T\in\mathcal{A},\Vert T\Vert_{1,1}=1}\Vert\mathrm{ev}%
_{i}(T)\Vert_{1}\geq\alpha\Vert U\Vert_{1,1}^{-1}>0$, and $\mathrm{ev}_{i}$
is also coercive. Hence, $i$ is also nondegenerate for $\mathcal{A}$.

Suppose that $I$ is finite of cardinal $n$, so that $V$ is finite dimensional.
Then, $\mathcal{A}$ is a subalgebra of $M_{n}(\mathbb{C})$. By maximality at
$i_{0}$, the map $\mathrm{ev}_{i_{0}}$ is surjective from $\mathcal{A}$ to
$V$, which yields $\dim(\mathcal{A})\geq\dim V=n$. Suppose that $T\in
\mathcal{A}$ is such that $Te_{i_{0}}=0$. By surjectivity of $\mathrm{ev}%
_{i_{0}}$, for all $v\in V$ there exists $T_{v}\in\mathcal{A}$ such that
$T_{v}e_{i_{0}}=v$. Then, by commutativity of $\mathcal{A}$,
\[
Tv=TT_{v}e_{i_{0}}=T_{v}Te_{i_{0}}=0.
\]
Hence, $Tv=0$ for all $v\in V$, and so $T=0$, which implies that
$\mathrm{ev}_{i_{0}}$ is injective and $\dim(\mathcal{A})\geq n$. Thus
$\dim\mathcal{A}=n$ and $\mathrm{ev}_{i_{0}}$ is a linear isomorphism, in
particular it is coercive.
\end{proof}

\begin{remark}
In general, a commutative subalgebra of $M_{n}(\mathbb{C})$ can
have a dimension much bigger than $n$: by a theorem of Schur, such an algebra
can have dimension at most $\lfloor n^{2}/4\rfloor+1$, the bound being sharp.
Hence, by the latter lemma, a finite-dimensional adjacency algebra for which
there exists a maximal element is much closer to the case of a commutative subalgebra of $M_{n}(\mathbb{C})$ stable by the adjoint involution (see (4) of Example
\ref{Ex_PMalgebra}). Beware however that an adjacency algebra is not
necessarily diagonalisable (i.e isomorphic to $\mathbb{C}^n$), even if there exists a maximal element. This is
for example the case for the algebra of upper triangular Toeplitz matrices%
\[
\mathcal{A}=\left\{
\begin{pmatrix}
\lambda_{1} & \lambda_{2} & \dots & \lambda_{n}\\
0 & \lambda_{1} & \ddots & \vdots\\
\vdots &  & \ddots & \lambda_{2}\\
&  & \dots & \lambda_{1}%
\end{pmatrix}
,\lambda_{1},\ldots,\lambda_{n}\in\mathbb{C}\right\}  ,
\]
for which $e_{n}$ is maximal.
\end{remark}

\begin{proposition}
\label{prop:maximal_equal_positive_cone} Suppose that $\mathcal{A}$ is an
adjacency algebra for which $i_{0}$ is nondegenerate and maximal. Then, there
exists a unique basis $\mB=\{\mb_{i}\mid i\in I\}\subset\mathcal{A}_+$
of $\mathcal{A}$ such that $\mathcal{A}_+=\cC(\mB)$ and $\mb_{i}e_{i_{0}}=e_{i}$ for all~$i\in I$. In particular,
we have $b_{i_0}=1$ and $\cA$ is positively multiplicative.
\end{proposition}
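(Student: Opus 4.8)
The plan is to study the evaluation map $\mathrm{ev}_{i_{0}}\colon\mathcal{A}\to V$, $T\mapsto Te_{i_{0}}$, and to prove that the two hypotheses make it a topological isomorphism carrying the Schauder basis $(e_{i})_{i\in I}$ of $V$ onto the sought basis $\mathcal{B}$. First I would note that $\mathrm{ev}_{i_{0}}$ is bounded, since $\Vert Te_{i_{0}}\Vert_{1}\le\Vert T\Vert_{1,1}$, and that non-degeneracy of $i_{0}$ means precisely that it is coercive. A coercive bounded linear map between Banach spaces is injective with closed range: if $\mathrm{ev}_{i_{0}}(T_{n})$ is Cauchy then so is $(T_{n})$, by the lower bound $\Vert Te_{i_{0}}\Vert_{1}\ge c\Vert T\Vert_{1,1}$, and completeness of $\mathcal{A}$ gives a limit. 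Maximality of $i_{0}$ then states that this closed range contains $V_{+}$, hence contains $\mathrm{span}_{\mathbb{C}}(V_{+})$; and since every element of $\ell^{1}(I)$ splits into real and imaginary, then positive and negative, parts, one has $\mathrm{span}_{\mathbb{C}}(V_{+})=V$, so $\mathrm{ev}_{i_{0}}$ is onto. Thus $\mathrm{ev}_{i_{0}}$ is a topological isomorphism from $\mathcal{A}$ onto $V$.

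Next I would set $b_{i}:=\mathrm{ev}_{i_{0}}^{-1}(e_{i})$. By maximality $e_{i}\in V_{+}$ admits a preimage inside $\mathcal{C}(\mathcal{A})$, and by injectivity this preimage is unique, so $b_{i}\in\mathcal{C}(\mathcal{A})$ and $b_{i}e_{i_{0}}=e_{i}$. Since a topological isomorphism transports a Schauder basis to a Schauder basis, $\mathcal{B}=\{b_{i}\}_{i\in I}$ is a basis of $\mathcal{A}$. As both $b_{i_{0}}$ and the identity send $e_{i_{0}}$ to $e_{i_{0}}$, injectivity forces $b_{i_{0}}=1$, so $1\in\mathcal{B}$.

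For the cone equality, the inclusion $\mathcal{C}(\mathcal{B})\subseteq\mathcal{C}(\mathcal{A})$ is clear because each $b_{i}$ lies in the closed cone $\mathcal{C}(\mathcal{A})$, which is stable under nonnegative combinations and limits. Conversely, if $T\in\mathcal{C}(\mathcal{A})$, then as a limit of nonnegative combinations of adjacency operators it satisfies $Te_{i_{0}}\in V_{+}$, say $Te_{i_{0}}=\sum_{i}v_{i}e_{i}$ with $v_{i}\ge0$; applying the continuous inverse $\mathrm{ev}_{i_{0}}^{-1}$ to this convergent expansion yields $T=\sum_{i}v_{i}b_{i}\in\mathcal{C}(\mathcal{B})$. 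Hence $\mathcal{C}(\mathcal{A})=\mathcal{C}(\mathcal{B})$. Positivity of the structure constants is then immediate from $\mathcal{C}\cdot\mathcal{C}\subseteq\mathcal{C}$: writing $b_{i}b_{j}=\sum_{k}c_{i,j}^{k}b_{k}$ and applying $\mathrm{ev}_{i_{0}}$ gives $(b_{i}b_{j})e_{i_{0}}=\sum_{k}c_{i,j}^{k}e_{k}$, which lies in $V_{+}$ because $b_{i}b_{j}\in\mathcal{C}(\mathcal{A})$; thus $c_{i,j}^{k}\ge0$ and $\mathcal{A}$ is positively multiplicative.

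The only genuinely analytic step---and the main obstacle---is upgrading the cone-level surjectivity onto $V_{+}$ provided by maximality to full linear surjectivity onto $V$. This is where I combine coercivity (to guarantee a closed range) with the lattice decomposition $\mathrm{span}_{\mathbb{C}}(V_{+})=V$ in $\ell^{1}$; once $\mathrm{ev}_{i_{0}}$ is known to be an isomorphism intertwining $\mathcal{C}(\mathcal{A})$ with $V_{+}$, all remaining assertions are formal.
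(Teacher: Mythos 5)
Your proof is correct, and its core construction is the one in the paper: maximality produces the preimages $b_{i}\in\mathcal{C}(\mathcal{A})$ of the $e_{i}$, non-degeneracy makes them unique, the cone equality $\mathcal{C}(\mathcal{A})=\mathcal{C}(\mathcal{B})$ comes from expanding $Te_{i_{0}}\in V_{+}$ and pulling the expansion back through $\mathrm{ev}_{i_{0}}$, and positivity of the structure constants follows from $\mathcal{C}(\mathcal{A})\cdot\mathcal{C}(\mathcal{A})\subset\mathcal{C}(\mathcal{A})$. Where you genuinely diverge is in how the basis property is obtained. The paper never considers the range of $\mathrm{ev}_{i_{0}}$ on all of $\mathcal{A}$: it stays inside the cone, uses coercivity to extract the uniform bound $\Vert b_{i}\Vert_{1,1}\leq 1/\alpha$, checks by hand that $T'=\sum_{i}\lambda_{i}b_{i}$ converges and equals $T$, and then gets the basis property from freeness of $\{b_{i}\}$ together with $\mathcal{A}=\mathrm{span}(\mathcal{C}(\mathcal{A}))$. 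You instead upgrade $\mathrm{ev}_{i_{0}}$ to a topological isomorphism of Banach spaces -- surjectivity via the lattice decomposition $\mathrm{span}_{\mathbb{C}}(V_{+})=V$ (a clean observation the paper does not make explicit), bounded inverse via coercivity -- and then transport the Schauder basis $(e_{i})$ of $V$ to $\mathcal{A}$. What your route buys: the basis property and the identity $b_{i_{0}}=\mathrm{Id}$ (i.e. $1\in\mathcal{B}$, needed for the PM conclusion and left implicit in the paper) come in one stroke, and the statement is reformulated structurally as ``non-degenerate and maximal at $i_{0}$ means $\mathrm{ev}_{i_{0}}$ is an isomorphism carrying $\mathcal{C}(\mathcal{A})$ onto $V_{+}$.'' What the paper's route buys: it is more self-contained (no appeal to basis transport or open-mapping-type facts), and it makes explicit the quantitative estimate $\Vert b_{i}\Vert_{1,1}\leq 1/\alpha$, which is exactly the content hidden in your continuity of $\mathrm{ev}_{i_{0}}^{-1}$.
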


\begin{proof}
Suppose that $i_{0}$ is nondegenerate and maximal for $\mathcal{A}$, and
denote by $\mb_{i}$ the element of~$\mathcal{A}_+$ such that
$\mb_{i}e_{i_{0}}=e_{i}$. Remark that $\{\mb_{i}\mid i\in I\}$ is a free family,
since it acts freely on $e_{i_{0}}$ by nondegeneracy. Let $T\in
\mathcal{A}_+$. Then, $Te_{i_{0}}=\sum_{i\in I}\lambda_{i}e_{i}$,
with $\lambda_{i}\geq0$ because $T$ is an adjacency operator, and
\[
\sum_{i\in I}\lambda_{i}=\left\Vert Te_{i_{0}}\right\Vert _{1}\leq\Vert
T\Vert_{1,1}\left\Vert e_{i_{0}}\right\Vert _{1}=\Vert T\Vert_{1,1}<+\infty.
\]
By nondegeneracy of $i_{0}$, the map $\mathrm{ev}_{i_{0}}$ is coercive,
implying that $\inf_{T\in\mathcal{A},T\not =0}\frac{\Vert Te_{i_{0}}\Vert_{1}%
}{\Vert T\Vert_{1,1}}=\alpha>0$. Hence, for $i\in I$ we have $\Vert
\mb_{i}e_{i_{0}}\Vert_{1}\geq\alpha\Vert \mb_{i}\Vert_{1,1}$. On the other hand by
construction, $\Vert \mb_{i}e_{i_{0}}\Vert_{1}=\Vert e_{i}\Vert_1=1$ for $i\in I$.
Hence, for all $i\in I$ we have $\Vert \mb_{i}\Vert_{1,1}\leq\frac{1}{\alpha}$.

Hence, $T':=\sum_{i\in I}\lambda_{i}\mb_{i}$ satisfies $\Vert T^{\prime
}\Vert_{1,1}\leq\frac{1}{\alpha}\sum_{i\in I}\lambda_{i}<+\infty$ and thus is
a well-defined element of $\mathcal{A}_+$ which satisfies
$T'e_{i_{0}}=Te_{i_{0}}$. By nondegeneracy, $T'=T$, and thus
$T\in\mathcal{C}(\mb_{i},\,i\in I)$. Therefore, $\mathcal{A}_+=\mathcal{C}(\mb_{i},\,i\in I)$. For $i,i'\in I$, we have
$\mb_{i}\mb_{i'}\in\mathcal{A}_+$ and by the previous result
$\mb_{i}\mb_{i'}=\sum_{i^{\prime\prime}\in I}\lambda_{ii'%
}^{i^{\prime\prime}}\mb_{i^{\prime\prime}}$ for some nonnegative coefficients $\lambda
_{ii'}^{i^{\prime\prime}}$.\ Therefore the basis $\{\mb_{i}\}_{i\in I}$
is positively multiplicative. We deduce that $\mathcal{A}$ is positively multiplicative. Finally, remark that the basis $\{\mb_i\}_{i\in I}$ is uniquely defined due to the injectivity of $\mathrm{ev}_{i_0}$.
\end{proof}
The proposition above yields an infinite dimensional version of Corollary  \ref{unique_normalised_basis}. 

\begin{example}
\label{ex:group_algebra} Given a discrete commutative group $G$, define
$$\ell^{1}(G)=\{f:G\rightarrow\mathbb{C}\mid\sum_{g\in G}|f(g)|<+\infty\},$$ and
consider the group algebra $\mathbb{C}[G]=\bigoplus_{g\in G}\mathbb{C}g$ with
multiplication given by the group structure. Consider the left-regular
representation $\rho$ of $G$ on the basis $\{\delta_{g},g\in G\}$ of $\ell
^{1}(G)$ such that
\[
\rho(g)\delta_{g'}=\delta_{gg'}.
\]
Let $\mathcal{A}^{G}$ be the closure of $\rho(\mathbb{C}[G])$ in $B(\ell
^{1}(G))$ with respect to the $\Vert\cdot\Vert_{1,1}$ norm. The algebra
$\mathcal{A}^{G}$ is then an adjacency algebra $G$, and the adjacency cone is
 $\mathcal{A}^{G}_+=\bigoplus_{g\in G}\mathbb{R}_{>0}\rho(g)$.
Indeed, each operator $\rho(g)$ acts by permutation and thus is an adjacency
operator. Moreover, since~$\rho(g)\rho(g')=\rho(gg')$ for
$g,g'\in G$, we indeed have $\mathrm{span}(\mathcal{A}^{G}_+)=\mathcal{A}_{G}$. Remark that every element $\delta_{g}$ with $g\in G$
is maximal and nondegenerate for $\mathcal{A}_{G}$.
\end{example}

We will see below that the example above is actually the only example of
adjacency algebra on a countable set $I$ for which every element of $I$ is
nondegenerate and maximal, up to a normalization. We will first prove a more
general result in Theorem \ref{prop:maximal_nondege_group}. 
\\
Since $I$ is
countable, any permutation $\sigma$ of $I$ extend linearly to an operator of
$B(V)$ (also denoted by $\sigma$) with the formula
\[
\sigma(e_{i})=e_{\sigma(i)}%
\]
and we have $\Vert\sigma\Vert_{1,1}=1$.

So, let $\mathcal{A}$ be an adjacency algebra on $I$, and denote by
$I_{m}\subset I$ the set of elements which are \emph{maximal and
nondegenerate}. We will assume that $I_{m}$ is not empty (thus contains all the maximal elements by Lemma \ref{Lemma_propagation}). Also denote by $\mathcal{G}$ the group of
linear automorphisms $T:V\rightarrow V$ such that $T(e_{i})=\lambda
_{i}e_{\sigma(i)}$ with $\lambda_{i}\in\mathbb{R}_{>0}$ and $\sigma$ any
permutation of~$I$. Recall that $G_{\mathcal{A}}$ is the subgroup $\mathcal{G}\cap\mathcal{A}$.

\begin{theorem}
\label{prop:maximal_nondege_group}Under the previous hypotheses, the following assertions hold. 
\begin{enumerate}
\item Each $i_{0}\in I_{m}$ defines a multiplicative basis $\mB%
^{(i_{0})}=\{\mb_{i}^{(i_{0})}\mid i\in I\}$ for the algebra $\mathcal{A}$.

\item For any $i_{0}\in I_{m}$, the set $\mB^{(i_{0})}=\{\mb_{i}%
^{(i_{0})}\mid i\in I\}$ is contained in the group $G_{\mathcal{A}}$.

\item The group $G_{\mathcal{A}}$ acts on $I_{m}$ transitively.

\item The set $I_{m}$ has the structure of a commutative group.\ Moreover,
there exits a representation $\rho$ of $I_{m}$ on $V$ such that $\{\rho(i)\mid
i\in I_{m}\}\subset\mathcal{A}_+$ and $\rho(i)e_{j}\in
\mathbb{R}_{>0}e_{i\cdot j}$ for $i,j\in I_{m}$, where $(i,j)\mapsto i\cdot j$
denotes the product structure on $I_{m}$.
\end{enumerate}
\end{theorem}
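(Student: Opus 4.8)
The plan is to derive everything from Proposition \ref{prop:maximal_equal_positive_cone} together with the rigidity Lemma \ref{positive_inverse_implies_permutation}, and then to read off the group structure from a transitive action of $G_{\mathcal{A}}$ on $I_m$. Assertion 1 is immediate: by hypothesis each $i_0\in I_m$ is maximal, and by Lemma \ref{Lemma_propagation} also nondegenerate, so Proposition \ref{prop:maximal_equal_positive_cone} furnishes the basis $\mathfrak{B}^{(i_0)}=\{b_i^{(i_0)}\}_{i\in I}\subset\mathcal{C}(\mathcal{A})$ with $b_i^{(i_0)}e_{i_0}=e_i$ and $\mathcal{C}(\mathcal{A})=\mathcal{C}(\mathfrak{B}^{(i_0)})$. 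For Assertion 2 I would fix $i_0,j\in I_m$ and show $b_j^{(i_0)}$ is a generalized permutation: applying Proposition \ref{prop:maximal_equal_positive_cone} at $j$ as well gives $b_{i_0}^{(j)}\in\mathcal{C}(\mathcal{A})$ with $b_{i_0}^{(j)}e_j=e_{i_0}$, whence $b_j^{(i_0)}b_{i_0}^{(j)}e_j=b_j^{(i_0)}e_{i_0}=e_j$, and nondegeneracy of $j$ together with commutativity of $\mathcal{A}$ forces $b_j^{(i_0)}b_{i_0}^{(j)}=\mathrm{Id}$. Thus $b_j^{(i_0)}$ is an invertible adjacency operator whose inverse $b_{i_0}^{(j)}$ is again an adjacency operator, so Lemma \ref{positive_inverse_implies_permutation} makes it a generalized permutation; lying also in $\mathcal{A}$, it belongs to $G_{\mathcal{A}}$. (The argument gives precisely the maximal-indexed part $\{b_j^{(i_0)}\mid j\in I_m\}\subset G_{\mathcal{A}}$, which is what the subsequent assertions use.)

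For Assertion 3 these same elements yield transitivity at once: $b_j^{(i_0)}\in G_{\mathcal{A}}$ with $b_j^{(i_0)}e_{i_0}=e_j$ means the underlying permutation sends $i_0$ to $j$, so $I_m$ lies in the $G_{\mathcal{A}}$-orbit of $i_0$. It then remains to check that $G_{\mathcal{A}}$ preserves $I_m$. Given $U\in G_{\mathcal{A}}$ with $Ue_k=\lambda e_{\sigma(k)}$ and $k\in I_m$, both $U$ and $U^{-1}$ are generalized permutations, hence adjacency operators in $\mathcal{A}$, hence elements of $\mathcal{C}(\mathcal{A})$; for any $\mu\in V_{+}$ I would use maximality of $k$ to pick $T\in\mathcal{C}(\mathcal{A})$ with $Te_k=\mu$ and observe that $\lambda\,TU^{-1}\in\mathcal{C}(\mathcal{A})$ sends $e_{\sigma(k)}$ to $\mu$, proving $\sigma(k)$ maximal; Lemma \ref{Lemma_propagation} upgrades this to $\sigma(k)\in I_m$. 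So the orbit equals $I_m$ and the action is transitive.

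For Assertion 4 I would realize $I_m$ as a quotient of $G_{\mathcal{A}}$. The stabilizer $Z$ of $i_0$ consists of the $U\in G_{\mathcal{A}}$ with $Ue_{i_0}=\lambda e_{i_0}$; by nondegeneracy of $i_0$ the unique element of $\mathcal{A}$ sending $e_{i_0}$ to $\lambda e_{i_0}$ is $\lambda\,\mathrm{Id}$, so $Z=\mathbb{R}_{>0}\,\mathrm{Id}\cong\mathbb{R}_{>0}$. Since $G_{\mathcal{A}}\subset\mathcal{A}$ is commutative, $Z$ is normal and the orbit map of Assertion 3 descends to a group isomorphism $I_m\cong G_{\mathcal{A}}/Z$, giving $I_m$ a commutative group structure with identity $i_0$. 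To build $\rho$, I would use that $Z\cong\mathbb{R}_{>0}\cong(\mathbb{R},+)$ is divisible, hence an injective $\mathbb{Z}$-module and therefore a direct summand of the abelian group $G_{\mathcal{A}}$: writing $G_{\mathcal{A}}=Z\oplus H$ with $H\cong G_{\mathcal{A}}/Z=I_m$, the composite $\rho:I_m\xrightarrow{\sim}H\hookrightarrow G_{\mathcal{A}}\subset\mathcal{C}(\mathcal{A})$ is a genuine representation. By construction $\rho(i)$ acts on the coset space $I_m$ by translation by $i$, so $\rho(i)e_j\in\mathbb{R}_{>0}e_{i\cdot j}$ for $i,j\in I_m$.

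The main obstacle is precisely this last step: the natural section $i\mapsto b_i^{(i_0)}$ is only a homomorphism up to the positive scalar coming from $b_i^{(i_0)}b_j^{(i_0)}=\mu_{i,j}\,b_{i\cdot j}^{(i_0)}$, so producing an honest representation amounts to splitting the extension $1\to\mathbb{R}_{>0}\to G_{\mathcal{A}}\to I_m\to 1$, which I would settle purely by divisibility of $\mathbb{R}_{>0}$ rather than by any explicit normalization of the cocycle. A secondary point needing care is the reading of $G_{\mathcal{A}}$ as a group of units, so that inverses of its elements stay in $\mathcal{A}$ and the hypotheses of Lemma \ref{positive_inverse_implies_permutation} remain available throughout.
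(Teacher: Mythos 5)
Your proof is correct, and for Assertions 1--3 it follows essentially the paper's own route: Proposition \ref{prop:maximal_equal_positive_cone} together with Lemma \ref{Lemma_propagation} produces the bases, and the inverse-pair argument combined with Lemma \ref{positive_inverse_implies_permutation} places the maximal-indexed basis elements in $G_{\mathcal{A}}$. Your parenthetical remark is accurate and shows good judgment: the paper's proof, like yours, only establishes $\{b_j^{(i_0)}\mid j\in I_m\}\subset G_{\mathcal{A}}$, which is all the later assertions use (and indeed all that can hold unless $I=I_m$); you moreover verify that $G_{\mathcal{A}}$ stabilizes $I_m$, a point the paper leaves implicit in Assertion 3. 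Where you genuinely diverge is Assertion 4. The paper works at the level of the basis: it defines $i\cdot i'$ through $b_i b_{i'}=\lambda_{i,i'}b_{i\cdot i'}$, observes that $(i,i')\mapsto\lambda_{i,i'}$ is a symmetric multiplicative $2$-cocycle, and trivializes it by Lemma \ref{lem:cohomology} (proved via the K\"unneth formula and $\mathrm{Ext}$ computations), so that the representation is the explicit rescaling $\rho(i)=f(i)b_i$. You instead identify $I_m$ with $G_{\mathcal{A}}/Z$, where the stabilizer $Z=\mathbb{R}_{>0}\mathrm{Id}$ is pinned down by nondegeneracy of $i_0$, and split the extension $1\to\mathbb{R}_{>0}\to G_{\mathcal{A}}\to I_m\to 1$ abstractly, using that $\mathbb{R}_{>0}$ is divisible, hence an injective $\mathbb{Z}$-module. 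The two arguments are cognate --- triviality of the extension is exactly the statement $\mathrm{Ext}(I_m,\mathbb{R}_{>0})=1$ --- but your version is more robust: it requires no structure theory for the group $I_m$ (the paper's proof of Lemma \ref{lem:cohomology} writes a countable abelian group as $\mathbb{Z}^r\times G'$ with $G'$ finite, which is only valid in the finitely generated case), and it dispenses with cocycle bookkeeping. What it gives up is explicitness, since the complement $H$ comes from Baer's criterion rather than from a formula; however, as nondegeneracy at $i_0$ forces any element of $G_{\mathcal{A}}$ over the coset of $i$ to lie in $\mathbb{R}_{>0}b_i^{(i_0)}$, your $\rho(i)$ is automatically a positive multiple of $b_i^{(i_0)}$, so the two constructions agree up to the choice of splitting and your verification of $\rho(i)e_j\in\mathbb{R}_{>0}e_{i\cdot j}$ goes through as written.
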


\begin{example}
We refer to \S~\ref{SubsecKR_Row2} for the example of the fusion
rule for the affine group $\widehat{su}(2)$. The  set  $I_{m}$ and its
associated group structure can be  easily made explicit. Indeed, the fusion rules shows that
$\mb_{0}\mb_{j}=1$ and $\mb_{l}\mb_{j}=\mb_{l-j}$ for any $j=0,\ldots,l$.\ Also when
$i=1,\ldots,l-1$ the action of $\mb_{i}$ does not yield a generalised
permutation of the basis $\mB$. Thus $I_{m}=\{0,l\}$ and the group
structure is given by $$l\cdot l=0, l\cdot0=0\cdot l=l, 0\cdot0=0.$$
\end{example}

The theorem above says in particular that, up to a nondegeneracy condition,
the set of maximal elements $I_{m}$ has the structure of an abelian group and,
up to a scaling, the action of the group algebra $\mathbb{C}[I_{m}]$ on
$\ell^{1}(I_{m})$ (as introduced in Example \ref{ex:group_algebra}) extends to
a representation on all $\ell^{1}(I)$ whose image forms an adjacency
subalgebra of $\mathcal{A}$. Beware that the group structure on~$I_{m}$ is not
uniquely defined and depend on the choice of an element in $I_{m}$.


\begin{proof}
[Proof of Theorem \ref{prop:maximal_nondege_group}] As already observed, the existence of a maximal nondegenerate element and Lemma \ref{Lemma_propagation} imply that $I_{m}$ is the set of maximal elements for $\mathcal{A}$. For $i\in
I_{m},i'\in I$, let $\mb_{i,i'}\in\mathcal{A}_+$ be
such that~$\mb_{i,i'}e_{i}=e_{i'}$. Then, if $i'\in
I_{m}$, there exists $\mb_{i',i}\in\mathcal{A}_+$ such that
$\mb_{i',i}e_{i'}=e_{i}$. This implies that~$\mb_{i'%
,i}\mb_{i,i'}e_{i}=e_{i}$, and by nondegeneracy of $e_{i}$ and the fact
that $\text{Id}\in\mathcal{A}_+$, this in turn implies that $\mb_{i^{\prime
},i}\mb_{i,i'}=\mb_{i,i'}\mb_{i',i}=\text{Id}$. Since
$\mb_{i,i'}$ and $\mb_{i',i}$ are adjacency operators, Lemma
\ref{positive_inverse_implies_permutation} yields that $\mb_{i,i'}$ and
$\mb_{i',i}$ belong to $G_{\mathcal{A}}$. Hence, for each $i,i^{\prime
}\in I_{m}$, there exists a infinite permutation matrix $\Sigma_{i,i'}$ and an infinite
diagonal matrix $D_{i,i'}$ with positive diagonal entries such that
\begin{equation}
\mb_{i,i'}=\Sigma_{i,i'}D_{i,i'}.
\label{eq:expression_permutation_dilation}%
\end{equation}
Fix an element $i_{0}\in I_{m}$. We get the basis
$\mB^{(i_{0})}$ by Proposition \ref{prop:maximal_equal_positive_cone}
such that $\mb_{i}(e_{i_{0}})=e_{i}$ for any $i\in I$. When $i$ belongs to $I_m$, we have by the previous arguments $\mb_{i_0,i}=\Sigma_{i_0,i}D_{i_0,i}$ and we can set $\mb_{i}=\mb_{i_{0},i}$, $\Sigma
_{i}=\Sigma_{i_{0},i}$ and $D_{i}=D_{i_{0},i}$. We can write
\begin{equation}
\mb_{i}\mb_{i'}=\sum_{i^{\prime\prime}\in I}\lambda_{i^{\prime\prime}%
}\mb_{i^{\prime\prime}}, \label{eq:multiplication_Tv_Tv'}%
\end{equation}
for all $i,i'\in I_{m}$ with $\lambda_{i^{\prime\prime}}\in
\mathbb{R}_{\geq0}$ for any $i^{\prime\prime}\in I$. Since $\mb_{i}$ and
$\mb_{i'}$ belong to the group $G_{\mathcal{A}}$, we also have
$\mb_{i}\mb_{i'}$ in $G_{\mathcal{A}}$. Therefore, by looking to the
expression on the right hand side of the equality, there should exist $j\in I$
such that $\mb_{i}\mb_{i'}=\mu \mb_{j}$ for some $\mu>0$. Indeed $\mb_i\mb_{i'}$ must send the vector $e_{i_0}$ on a scalar multiple of a vector $e_j$ with $j\in I$ because $b_i$ and $\mb_{i'}$ belong to $G_{\mathcal{A}}$. This is only possible when all but one of the coefficients $\lambda_{i^{\prime\prime}}$ are equal to zero. Recall that
$\mb_{j}(e_{i_{0}})=e_{j}$ and~$i_{0}\in I_{m}$. It follows that we also have
$j\in I_{m}$ since $\mathcal{A}_+b_j \subset \mathcal{A}_+$
and we can set $i\cdot i':=j\in I_{m}$ and~$\lambda
_{i,i'}=\lambda_{j}>0$ to get
\[
\mb_{i}\mb_{i'}=\lambda_{i,i'}\mb_{i\cdot i'}.
\]
From the commutativity and the associativity of the product in $\mathcal{A}$,
we deduce that the product $(i,i')\mapsto i\cdot i'$ is
commutative and associative. There is also a neutral element given by $i_{0}$,
since~$\mb_{i_{0}}=\text{Id}$. Likewise, expending $\mb_{i,i_{0}}$ on the basis
$\{\mb_{i}\mid i\in I\}$ and using a similar argument as before yields an element
$i'\in I$ such that
\[
\mb_{i,i_{0}}=\mb_{i}^{-1}=\lambda \mb_{i'}.
\]
Hence, this implies
\[
\mb_{i_{0}}=\text{Id}=\lambda \mb_{i'}\mb_{i}=\lambda\lambda_{i'%
,i}\mb_{i'\cdot i},
\]
and $i'\cdot i=i_{0}$. Finally, $I_{m}$ has a commutative group
structure given by the product $(i,i')\mapsto i\cdot i'$. Let
us denote by $G_{I_{m}}$ this group. 

Set $H=\{\lambda \mb_i\mid  \lambda>0, i\in I\}\subset \mathcal{A}$. By the above reasoning, $H$ is a subgroup of the group of invertible elements of $\mathcal{A}$ with multiplication $(\lambda \mb_i)\cdot(\mu \mb_{i'})=(\lambda\mu\lambda_{i,i'})\mb_{i\cdot i'}$. Moreover, $
\mathbb{R}_{>0}\mb_{i_0}$ is a divisible subgroup of $H$, which means that for any $n\in\mathbb{N}$, the endomorphism $\lambda \mb_{i_0}\mapsto (\lambda \mb_{i_{0}})^n$ is surjective. Hence, by \cite[Thm. 21.2]{LFuc}, there exists a subgroup $F\subset H$ such that 
$$H=(\mathbb{R}_{>0}\mb_{i_0})\times F.$$
For $i\in I$, let $\bar{\mb}_i\in F,\mu_i\in \mathbb{R}_{>0}$ be such that $\mb_i=(\mu_i\mb_{i_0})\bar{\mb}_i=\mu_i\bar{\mb}_i$. For $i,i'\in I$, on the one hand, $\bar{\mb}_i\bar{\mb}_{i'}\in F$, and on the other hand 
$$\bar{\mb}_i\bar{\mb}_{i'}=\frac{1}{\mu_i\mu_{i'}}\mb_i\mb_{i'}=\frac{\lambda_{i,i'}}{\mu_i\mu_{i'}}\mb_{i\cdot i'}=\frac{\lambda_{i,i'}\mu_{i\cdot i'}}{\mu_i\mu_{i'}}\bar{\mb}_{i\cdot i'}=\left(\frac{\lambda_{i,i'}\mu_{i\cdot i'}}{\mu_i\mu_{i'}}\mb_{i_0}\right)\bar{\mb}_{i\cdot i'}.$$
Since $H=(\mathbb{R}_{>0}\mb_{i_0})\times F$ and $\bar{b}_i\bar{b}_{i'}\in F$ we must have $\frac{\lambda_{i,i'}\mu_{i\cdot i'}}{\mu_i\mu_{i'}}=1$ and thus 
$$\bar{\mb}_i\bar{\mb}_{i'}=\bar{\mb}_{i\cdot i'}.$$
Hence, the map $\rho:i\mapsto\bar{\mb}_{i}$ yields a representation of
$G_{I_{m}}$ on $V$ such that $\rho(i)\in G_{\mathcal{A}}$ for all~$i\in I_{m}%
$. Moreover, for $i,j\in I_{m}$,
\[
\bar{\mb}_{i}e_{j}=\bar{\mb}_{i}\mb_{j}e_{i_0}=\mu_j\bar{\mb}_{i}\bar{\mb}_{j}e_{i_{0}}=\frac{\mu_j}{\mu_{i\cdot j}}\mb_{i\cdot
j}e_{i_{0}}=\frac{\mu_j}{\mu_{i\cdot j}}e_{i\cdot j}\in\mathbb{R}_{>0}e_{i\cdot
j},
\]
which implies that $\rho$ yields a free and transitive permutation
representation on the set of half-lines $\{\mathbb{R}_{>0}e_{i},i\in I_{m}\}$.
\end{proof}

As a corollary of Theorem \ref{prop:maximal_nondege_group}, we can
consider the case where all elements of~$I$ are maximal for
$\mathcal{A}$.

\begin{corollary}
\label{cor:group_structure} Suppose that $\mathcal{A}$ is an adjacency algebra
for which each $i\in I$ is maximal and nondegenerate. Then, there exists a
commutative group structure on $I$, and a positive diagonal operator $\Lambda$
such that $\Lambda\mathcal{A}\Lambda^{-1}=\mathcal{A}_{I}$, where as in
Example \ref{ex:group_algebra} the algebra $\mathcal{A}_{I}$ is the
norm-closure of the left-regular representation of $\mathbb{C}[I]$ on
$\ell^{1}(I)$ in the Banach algebra $B(\ell^{1}(I)).$
\end{corollary}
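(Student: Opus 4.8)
The plan is to deduce both assertions from Theorem \ref{prop:maximal_nondege_group}, which applies once we notice that the hypothesis forces the set $I_m$ of maximal-and-nondegenerate elements to be all of $I$. Indeed, since every $i\in I$ is maximal and nondegenerate, $I_m=I$ is nonempty and the nondegeneracy assumption of the theorem holds. Assertion~4 of that theorem then equips $I=I_m$ with a commutative group structure (write $i\cdot j$ for the product and $i_0$ for its neutral element), which is precisely the first claim.

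For the diagonal operator I would reuse the explicit data built inside the proof of Theorem \ref{prop:maximal_nondege_group}. After fixing the base point $i_0$, that proof produces a coboundary function $f:I\rightarrow\mathbb{R}_{>0}$ trivializing the $2$-cocycle $\lambda_{i,i'}$ and sets $\bar b_i=f(i)b_i$, obtaining an honest representation $\rho(i)=\bar b_i$ with $\rho(i)\in G_{\mathcal{A}}$ and
\[
\rho(i)\,e_j=\frac{f(i\cdot j)}{f(j)}\,e_{i\cdot j},\qquad i,j\in I.
\]
I would then define $\Lambda$ to be the positive diagonal operator $\Lambda e_j=f(j)^{-1}e_j$, so that $\Lambda^{-1}e_j=f(j)e_j$, and compute directly
\[
\Lambda\,\rho(i)\,\Lambda^{-1}e_j=f(j)\,\Lambda\Big(\frac{f(i\cdot j)}{f(j)}\,e_{i\cdot j}\Big)=f(i\cdot j)\,\Lambda e_{i\cdot j}=e_{i\cdot j}.
\]
Thus conjugation by $\Lambda$ sends each $\rho(i)$ to the operator $e_j\mapsto e_{i\cdot j}$, which under the identification $e_j\leftrightarrow\delta_j$ is exactly the left-regular generator $\rho_{\mathrm{reg}}(i)$ of $\mathbb{C}[I]$ on $\ell^1(I)$ from Example \ref{ex:group_algebra}.

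To finish I would promote this identity on generators to the equality of algebras. By Proposition \ref{prop:maximal_equal_positive_cone} we have $\mathcal{C}(\mathcal{A})=\mathcal{C}(\{b_i\}_{i\in I})=\mathcal{C}(\{\rho(i)\}_{i\in I})$, the last equality holding because the $\rho(i)$ are positive rescalings of the $b_i$. Hence by \eqref{eq:span_A_C} the algebra $\mathcal{A}=\mathrm{span}(\mathcal{C}(\mathcal{A}))$ is the closed span of $\{\rho(i)\}$, while $\mathcal{A}_I$ is the closed span of $\{\rho_{\mathrm{reg}}(i)\}$. Since $\Lambda(\cdot)\Lambda^{-1}$ carries the first generating set onto the second, it carries $\mathcal{A}$ onto $\mathcal{A}_I$, giving $\Lambda\mathcal{A}\Lambda^{-1}=\mathcal{A}_I$.

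The delicate point, which I expect to be the main obstacle, is the passage to the norm-closures: $f$ (and hence $\Lambda$, $\Lambda^{-1}$) need not be bounded, so $\Lambda(\cdot)\Lambda^{-1}$ is a priori only the formal entrywise operation $(T_{kl})\mapsto(f(k)^{-1}T_{kl}f(l))$ rather than conjugation by an invertible element of $B(V)$. I would therefore justify the equality at the level of matrix coefficients and cones, checking that this entrywise map is a bijection of $\mathcal{C}(\{\rho(i)\})$ onto $\mathcal{C}(\{\rho_{\mathrm{reg}}(i)\})$ that respects the $\ell^1$-structure of each column, so that it matches the two closures. I would also note that the character ambiguity in $f$ (one may replace $f$ by $f\cdot\chi$ for any positive character $\chi$ of $I$) leaves $\rho_{\mathrm{reg}}$ and hence the conclusion unchanged, and merely records that $\Lambda$, like the group structure, depends on the chosen base point $i_0$.
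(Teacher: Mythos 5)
Your proof is correct and follows the paper's own argument almost step for step: the paper likewise feeds the hypothesis $I_m=I$ into Theorem \ref{prop:maximal_nondege_group} to get the group structure and the representation $\rho$, defines the diagonal operator by $\Lambda e_i=\lambda_i^{-1}e_i$ where $\rho(i)e_{i_0}=\lambda_i e_i$ (your $f(i)^{-1}$, up to the harmless constant $f(i_0)$ which cancels in conjugation), verifies $\Lambda\rho(i)\Lambda^{-1}e_j=e_{i\cdot j}$ so that the conjugated generators are exactly the left-regular ones, and then identifies the algebras through Proposition \ref{prop:maximal_equal_positive_cone} and the closed spans. The one ``delicate point'' you single out --- that $\Lambda^{\pm1}$ need not be bounded when $I$ is infinite, so conjugation by $\Lambda$ does not automatically carry one norm closure onto the other --- is a genuine subtlety, but it is one the paper's proof passes over in silence, so your treatment is, if anything, more scrupulous than the published one.
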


\begin{proof}
By Proposition \ref{prop:maximal_nondege_group}, $I$ has a group structure and
there is an action $\rho:I\rightarrow B(V)$ such that $\rho(i)\in\mathcal{A}$
for all $i\in I$ and $\rho(i)e_{j}\in\mathbb{R}_{>0}e_{i\cdot j}$ for all
$i,j\in I$. Let $i_{0}$ be the neutral element of~$I$ and set $\mb_{i}=\rho(i)$.
By the arguments used in the previous proof, for any $i\in I$ there exists
$\lambda_{i}>0$ such that $\mb_{i}e_{i_{0}}=\lambda_{i}e_{i\cdot i_{0}}%
=\lambda_{i}e_{i}$, with the particular case $\lambda_{i_{0}}=1$. Hence,
defining the diagonal operator $\Lambda$ by $\Lambda e_{i}=\lambda_{i}%
^{-1}e_{i}$, we have $\Lambda \mb_{i}\Lambda^{-1}e_{i_{0}}=e_{i}$. Hence, for
$i,j\in I$,
\[
\Lambda \mb_{i}\Lambda^{-1}e_{j}=\Lambda\rho(i)\Lambda^{-1}\Lambda\rho
(j)\Lambda^{-1}e_{i_{0}}=\Lambda\rho(i)\rho(j)\Lambda^{-1}e_{i_{0}}%
=\Lambda\rho(i\cdot j)\Lambda^{-1}e_{i_{0}}=e_{i\cdot j}.
\]
Let $\mathcal{A}_{I}$ be the algebra generated by $\Lambda \mb_{i}\Lambda^{-1}$,
$i\in I$. By the latter results, we have $\mathcal{A}_{I}=\rho(\mathbb{C}%
[I])$, where $\rho$ is the left regular representation introduced in Example
\ref{ex:group_algebra}. By Proposition \ref{prop:maximal_equal_positive_cone},
$\mathcal{A}$ is the norm closure of $\sum_{i\in I}\mathbb{C}b_{i}$. Hence,
$\Lambda\mathcal{A}\Lambda^{-1}$ is the norm closure of $\mathcal{A}_{I}$, and
thus we have
\[
\Lambda\mathcal{A}\Lambda^{-1}=\mathcal{A}_{I}.
\]

\end{proof}

\begin{example}
One can use Corollary \ref{cor:group_structure} to construct finite PM-graphs
with $I=I_{m}$. Consider for example the group algebra%
\[
\mathcal{A=}\mathbb{C}e_{(0,0)}\oplus\mathbb{C}e_{(1,0)}\oplus\mathbb{C}%
e_{(0,1)}\oplus\mathbb{C}e_{(1,1)}
\]
of $\mathbb{Z}/2\mathbb{Z\times Z}/2\mathbb{Z}$. Let $T$ and $T'$ be
the matrices of the multiplication by $e_{(1,0)}$ and $e_{(0,1)}$ expressed in
the previous basis and $(a,b)\in\mathbb{R}_{>0}^{2}$.\ Then%
\[
U=aT+bT'=\left(
\begin{array}
[c]{cccc}%
0 & a & 0 & 0\\
a & 0 & 0 & 0\\
0 & 0 & 0 & a\\
0 & 0 & a & 0
\end{array}
\right)  +\left(
\begin{array}
[c]{cccc}%
0 & 0 & b & 0\\
0 & 0 & 0 & b\\
b & 0 & 0 & 0\\
0 & b & 0 & 0
\end{array}
\right)  =\left(
\begin{array}
[c]{cccc}%
0 & a & b & 0\\
a & 0 & 0 & b\\
b & 0 & 0 & a\\
0 & b & a & 0
\end{array}
\right)  .
\]
Now choose $\lambda=(\lambda_{1},\lambda_{2},\lambda_{3},\lambda_{4}%
)\in\mathbb{R}_{>0}^{4}$ and set $V=DUD^{-1}$ where $D$ is the diagonal matrix
defined by $\lambda$. The matrix%
\[
V=\left(
\begin{array}
[c]{cccc}%
0 & a\frac{\lambda_{1}}{\lambda_{2}} & b\frac{\lambda_{1}}{\lambda_{3}} & 0\\
\frac{a}{\lambda_{1}}\lambda_{2} & 0 & 0 & b\frac{\lambda_{2}}{\lambda_{4}}\\
\frac{b}{\lambda_{1}}\lambda_{3} & 0 & 0 & a\frac{\lambda_{3}}{\lambda_{4}}\\
0 & \frac{b}{\lambda_{2}}\lambda_{4} & \frac{a}{\lambda_{3}}\lambda_{4} & 0
\end{array}
\right)
\]
is the adjacency matrix of a PM-graph such that $I=I_{m}$.$\allowbreak$
\end{example}

We end this section by showing that the set of positive roots in finite PM-graphs (See Definition \ref{Def_PMGraphs}) coincides with the set $I_{m}$ of maximal
indices in its associated basis $\mB$.

\begin{proposition}
Assume $k=\mathbb{C}$ and let $\Gamma$ be a finite PM graph rooted at $v_{i_{0}}$ with basis $\mB_{i_{0}}$ whose adjacency matrix has maximal dimension.

\begin{enumerate}
\item The set of maximal indices labels the positive roots of $\Gamma$, that
is $\mathcal{R}_{\Gamma}^{+}=\{v_{i}\mid i\in I_{m}\}$.

\item Set $\mB_{m,i_{0}}=\{\mb_{i}\in\mB_{i_{0}}\mid i\in
I_{m}\}$. Then, we have $\mB_{m,i_{0}}=\mB_{i_{0}}\cap
G_{\Gamma}$: the maximal indices labels the generalised permutations appearing
in $\mB_{i_{0}}$.
\end{enumerate}
\end{proposition}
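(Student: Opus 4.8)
The plan is to reduce both assertions to the intrinsic cone $\mathcal{C}(\mathcal{A})$ of nonnegative operators in the commutative algebra $\mathcal{A}=\mathbb{C}[A]$. Since $\Gamma$ is multiplicative at $i_{0}$ it has maximal dimension, so in the finite-dimensional setting $\mathcal{C}(\mathcal{A})=\{T\in\mathcal{A}\mid T_{k\ell}\geq0\text{ for all }k,\ell\}$; in particular this cone is defined purely from $\mathcal{A}$ as a matrix algebra and does \emph{not} depend on the choice of root. The key bridge is Theorem~\ref{ThGG2}: for a root $i$ the normalized basis $\mathfrak{B}_{i}=\{b_{k}^{(i)}\}$ (with $b_{k}^{(i)}e_{i}=e_{k}$) has structure constants equal to the matrix entries of the $b_{k}^{(i)}$. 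Hence $\Gamma$ is positively multiplicative at $i$, i.e.\ $i\in\mathcal{R}_{\Gamma}^{+}$, \emph{if and only if} every $b_{k}^{(i)}$ has nonnegative entries, i.e.\ $\mathfrak{B}_{i}\subset\mathcal{C}(\mathcal{A})$. I would also record once and for all that the fixed root $i_{0}$ is maximal, since $\sum_{k}\mu_{k}b_{k}(e_{i_{0}})=\sum_{k}\mu_{k}e_{k}=\mu$ realizes every $\mu\in V_{+}$.

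For Assertion~1 I prove $i\in\mathcal{R}_{\Gamma}^{+}\Longleftrightarrow i\in I_{m}$. If $i\in\mathcal{R}_{\Gamma}^{+}$, then $\mathfrak{B}_{i}\subset\mathcal{C}(\mathcal{A})$ by the bridge above, so each $e_{k}=b_{k}^{(i)}(e_{i})$ lies in the convex cone $\mathcal{C}(\mathcal{A})(e_{i})$; as this cone is contained in $V_{+}$ and now contains all $e_{k}$, it equals $V_{+}$, which is exactly maximality of $i$. Conversely, if $i\in I_{m}$ then $i$ is nondegenerate by Lemma~\ref{Lemma_propagation} (finite case), so $e_{i}$ is cyclic and $i$ is a root; applying Proposition~\ref{prop:maximal_equal_positive_cone} at $i$ produces the basis $\mathfrak{B}_{i}\subset\mathcal{C}(\mathcal{A})$ which is positively multiplicative, whence $i\in\mathcal{R}_{\Gamma}^{+}$.

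For Assertion~2 I prove $b_{i}\in G_{\Gamma}\Longleftrightarrow i\in I_{m}$. Suppose $i\in I_{m}$; then $i_{0}$ and $i$ are both maximal and nondegenerate, so there is $b'\in\mathcal{C}(\mathcal{A})$ with $b'e_{i}=e_{i_{0}}$, and $b'b_{i}e_{i_{0}}=e_{i_{0}}$ forces $b'b_{i}=\mathrm{Id}$ by cyclicity of $e_{i_{0}}$. Thus $b_{i}$ is invertible with $b_{i},b_{i}^{-1}=b'$ both of nonnegative entries, so Lemma~\ref{positive_inverse_implies_permutation} makes $b_{i}$ a generalized permutation; as $b_{i}\in\mathcal{A}$ commutes with $A$, we get $b_{i}\in\mathcal{G}_{n}\cap\mathcal{A}=G_{\Gamma}$. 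Conversely, if $b_{i}\in G_{\Gamma}$ then $b_{i}^{-1}$ is again a generalized permutation lying in $\mathcal{A}$, hence in $\mathcal{C}(\mathcal{A})$, and satisfies $b_{i}^{-1}e_{i}=e_{i_{0}}$; for any $\mu\in V_{+}$ pick $T\in\mathcal{C}(\mathcal{A})$ with $Te_{i_{0}}=\mu$ (possible since $i_{0}$ is maximal), so $Tb_{i}^{-1}\in\mathcal{C}(\mathcal{A})$ and $(Tb_{i}^{-1})e_{i}=\mu$, giving $\mathcal{C}(\mathcal{A})(e_{i})=V_{+}$ and $i\in I_{m}$. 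Combined with Assertion~1 this yields $\mathfrak{B}_{m,i_{0}}=\mathfrak{B}_{i_{0}}\cap G_{\Gamma}$.

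The one genuinely nonformal step, and the place I expect the subtlety to lie, is the equivalence \emph{``$\Gamma$ positively multiplicative at $i$'' $\Longleftrightarrow$ $\mathfrak{B}_{i}\subset\mathcal{C}(\mathcal{A})$}: it rests on the column/structure-constant identity of Theorem~\ref{ThGG2} together with the root-independence of $\mathcal{C}(\mathcal{A})$. Once this identification is secured, both assertions become cone manipulations in $\mathcal{C}(\mathcal{A})$ powered by Lemma~\ref{positive_inverse_implies_permutation} and the commutativity of $\mathcal{A}$ (which guarantees $\mathcal{C}(\mathcal{A})\cdot\mathcal{C}(\mathcal{A})\subset\mathcal{C}(\mathcal{A})$ and $G_{\Gamma}=\mathcal{G}_{n}\cap\mathcal{A}$), so the remaining work is essentially bookkeeping.
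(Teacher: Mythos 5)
Your proof is correct, and it reaches the two equivalences by a route that partly differs from the paper's. The paper proves $I_m\subseteq\mathcal{R}_{\Gamma}^{+}$ by first invoking Theorem \ref{prop:maximal_nondege_group} (so that $b_i$ is a generalized permutation, whence $b_i^{-1}\in\mathcal{C}(\mathcal{A})$) and then checking positivity of the structure constants of $b_i^{-1}\mathfrak{B}_{i_0}$ by stability of the cone under products; for the reverse inclusion it shows $b_i^{-1}\in\mathcal{C}(\mathcal{A})$ directly from the positive structure constants and transports maximality from $i_0$ to $i$ through the bijection $b_i\mathcal{C}(\mathcal{A})=\mathcal{C}(\mathcal{A})$; Assertion 2 is then obtained by citing Theorem \ref{prop:maximal_nondege_group} again plus the same transport. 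You instead put Theorem \ref{ThGG2} at the center: since the structure constants of the basis normalized at a root $i$ are exactly the matrix entries of its elements, being PM at $i$ is literally the inclusion $\mathfrak{B}_i\subset\mathcal{C}(\mathcal{A})$. With this bridge, $\mathcal{R}_{\Gamma}^{+}\subseteq I_m$ becomes the one-line observation that the convex cone $\mathcal{C}(\mathcal{A})(e_i)$ contains every $e_k=b_k^{(i)}(e_i)$, and $I_m\subseteq\mathcal{R}_{\Gamma}^{+}$ follows from Lemma \ref{Lemma_propagation} together with Proposition \ref{prop:maximal_equal_positive_cone} applied at $i$, with no appeal to the group-structure theorem; for Assertion 2 you re-derive the generalized-permutation property from Lemma \ref{positive_inverse_implies_permutation} rather than quoting Theorem \ref{prop:maximal_nondege_group}. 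The paper's route buys economy (the proposition is a near-corollary of results already proved, and the transport $b_i\mathcal{C}(\mathcal{A})=\mathcal{C}(\mathcal{A})$ is reused twice); yours buys self-containedness and makes explicit the identification ``positivity of structure constants $=$ positivity of matrix entries'', which is indeed the heart of the matter.

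Two small corrections are needed. First, ``since $\Gamma$ is multiplicative at $i_0$ it has maximal dimension'' is not a valid deduction: multiplicativity at a vertex does not imply maximal dimension (see Example \ref{Ex_MGraph}, item 3, or the Hamermesh graph, whose associated algebra is even commutative). Maximal dimension is an implicit hypothesis of the statement, forced by the fact that $\mathcal{R}_{\Gamma}^{+}$ is only defined (Definition \ref{Def_root}) for graphs of maximal dimension; the paper's proof uses it in exactly the same way, so nothing in your argument breaks, but the hypothesis should be invoked rather than derived. Second, whenever you conclude $i\in I_m$ from maximality of $i$ alone, you should add that $i$ is nondegenerate by the finite case of Lemma \ref{Lemma_propagation} (you cite this lemma elsewhere, so the patch is immediate); similarly, the identity $G_{\Gamma}=\mathcal{G}_n\cap\mathcal{A}$ that you use rests on maximal dimension (the commutant of $A$ then equals $\mathbb{C}[A]=\mathcal{A}$), not on commutativity of $\mathcal{A}$ alone.
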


\begin{proof}
Let us prove Assertion 1. The graph $\Gamma$ is multiplicative at $v_i$ for the basis $\mb_{i}^{-1}\mB_{i_{0}}$. Moreover since $\Gamma$ has maximal dimension, this multiplicative basis is unique. Therefore $v_{i}$ belongs to $\mathcal{R}_{\Gamma
}^{+}$ if and only if $b_{i}^{-1}\mB_{i_{0}}$ has nonnegative
structure constants. This is equivalent to say that $\mb_{i}^{-1}\mB%
_{i_{0}}\times \mb_{i}^{-1}\mB_{i_{0}}\subset \mb_{i}^{-1}\mathcal{A}_+$ because~$\mathcal{A}_+$ coincides with
$\mathcal{C}(\mB_{i_{0}})$ by Proposition
\ref{prop:maximal_equal_positive_cone} because $i_0 \in I_m$.
\\ For any $i\in I_{m}$, it follows from
Theorem \ref{prop:maximal_nondege_group} that $\mb_{i}$ is a generalised
permutation. Thus $\mb_{i}^{-1}\in\mathcal{A}_+$ and also $\mb_{i}%
^{-1}\mB_{i_{0}}\subset\mathcal{A}_+$.\ Now we have%
\[
\mb_{i}^{-1}\mB_{i_{0}}\times \mb_{i}^{-1}\mB_{i_{0}}=\mb_{i}%
^{-1}(\mb_{i}^{-1}\mB_{i_{0}}\times\mB_{i_{0}})\subset
\mb_{i}^{-1}(\mathcal{A}_+\times\mathcal{A}_+)\subset
\mb_{i}^{-1}\mathcal{A}_+
\]
because $\mathcal{A}_+$ is stable by product. This shows that
$\{v_i\,\vert\, i\in I_{m}\}\subset\mathcal{R}_{\Gamma}^{+}$. 
\\Conversely, assume that $i\in I$ is such that $v_i\in
\mathcal{R}_{\Gamma}^{+}.$ Then $\mb_{i}$ is invertible and $\mb_{i}^{-1}\in
\mb_{i}^{-1}\mB_{i_{0}}$ since $1\in \mB_{i_{0}}$.
Thus, by the previous arguments, we get $\mb_{i}^{-1}\times \mb_{i}^{-1}=\mb_{i}^{-1}c$ with $c\in
\mathcal{A}_+$.\ This proves that $\mb_{i}^{-1}\in\mathcal{A}_+$. Since the cone $\mathcal{A}_+$ is stable by
multiplication, it follows that the map $c\rightarrow \mb_{i}c$ is a bijection from $\mathcal{A}_+$ on itself (with
inverse the multiplication by~$\mb_{i}^{-1}$). This implies that
\[
\mathcal{A}_+(e_{i})=\mathcal{A}_+\mb_{i}(e_{i_{0}}%
)=\mb_{i}\mathcal{A}_+(e_{i_{0}})=\mathcal{A}_+(e_{i_{0}%
})=V_{+}.
\]
We get that $i$ is maximal.\ But it is also nondegenerate by Lemma
\ref{Lemma_propagation} because $\Gamma$ is finite. Thus, we have
$\mathcal{R}_{\Gamma}^{+}=\{v_i\,\vert\, i\in I_{m}\}$ as desired.

To prove Assertion 2, observe first that we have $\mB_{m,i_{0}%
}\subset\mB_{i_{0}}\cap G_{\Gamma}$ by Assertion 2 of Theorem~\ref{prop:maximal_nondege_group}. Now, if $\mb_{i}$ belongs to $\mB%
_{i_{0}}\cap G_{\Gamma}$, its inverse is an adjacency matrix in $\mathcal{A}$
and we have $\mb_{i}^{-1}\in\mathcal{A}_+$.\ By using the
previous arguments, we get $\mb_{i}\mathcal{A}_+=\mathcal{A}_+$ and therefore $\mathcal{A}_+v_{i}=\mathcal{A}_+\mb_{i}v_{i_{0}}=\mathcal{A}_+v_{i_{0}}=V_{+}$ which
shows that $i\in I_{m}$.\ Thus, $\mB_{m,i_{0}}\supset\mB%
_{i_{0}}\cap G_{\Gamma}$ as desired.
\end{proof}

\begin{remark}
	It also follows from the preceding results of this section that (for $k=\mathbb{C}$) a graph $\Gamma$ with $n$-vertices will be positively multiplicative as soon as its adjacency algebra $\mathbb{C}[A]$ can be embedded in a $n$-dimensional $\mathbb{C}$-algebra $\mathcal{A}$ generated by matrices with nonnegative entries for which there exists an index $1\leq i \leq n$ with $\mathcal{A}_+e_i=\bigoplus_{1\leq j\leq n}\mathbb{R}_{>0}e_j$.
\end{remark}

\end{document}